\NeedsTeXFormat{LaTeX2e}

\documentclass[generic,preprint]{lms}



\usepackage[utf8]{inputenc}
\usepackage{xspace,enumerate}
\usepackage{graphicx}
\usepackage{subcaption}
\usepackage[pdftex]{hyperref}
\usepackage{amsfonts,amsmath}
\usepackage[linecolor=blue]{todonotes}

\newif\iflong
\longfalse
\longtrue

\iflong
\makeatletter
\gdef\@journal{%
  \vbox to 5.5\p@{\noindent
    \parbox[t]{\textwidth}{\raggedleft\normalfont\footnotesize\baselineskip 9\p@
     \ifblm {\itshape Bull.}\ \titlep@gedetails
       \else\ifjlm {\itshape J.}\ \titlep@gedetails
         \else\ifplm {\itshape Proc.}\ \titlep@gedetails
          \else {\itshape }  
        \fi
      \fi
    \fi%
  }
  \vss}%
}

\makeatother
\else
\fi

\setcounter{secnumdepth}{2}

\DeclareMathOperator{\Rect}{Rect}


\newtheorem{theorem}{Theorem}[section]
\newtheorem{atheo}{Theorem}

\newtheorem{claim}[theorem]{Claim}
\newtheorem{lemma}[theorem]{Lemma}
\newtheorem{proposition}[theorem]{Proposition}
\newtheorem{corollary}[theorem]{Corollary}
\newtheorem{remark}[theorem]{Remark}
\newtheorem{definition}[theorem]{Definition}



\newcommand{\bbE}{{\ensuremath{\mathbb E}} }

\newcommand{\bbP}{{\ensuremath{\mathbb P}} }


\newcommand{\cA}{{\ensuremath{\mathcal A}} }
\newcommand{\cB}{{\ensuremath{\mathcal B}} }
\newcommand{\cC}{{\ensuremath{\mathcal C}} }
\newcommand{\cD}{{\ensuremath{\mathcal D}} }
\newcommand{\cE}{{\ensuremath{\mathcal E}} }
\newcommand{\cF}{{\ensuremath{\mathcal F}} }
\newcommand{\cG}{{\ensuremath{\mathcal G}} }
\newcommand{\cH}{{\ensuremath{\mathcal H}} }

\newcommand{\cK}{{\ensuremath{\mathcal K}} }

\newcommand{\cM}{{\ensuremath{\mathcal M}} }

\newcommand{\cO}{{\ensuremath{\mathcal O}} }

\newcommand{\cR}{{\ensuremath{\mathcal R}} }

\newcommand{\cU}{{\ensuremath{\mathcal U}} }

\newcommand{\cW}{{\ensuremath{\mathcal W}} }



\newcommand{\gep}{\varepsilon}       


\renewcommand{\tilde}{\widetilde}          



\newcommand{\R}{\mathbb{R}}

\newcommand{\Z}{\mathbb{Z}}
\newcommand{\N}{\mathbb{N}}

\def\bs{\boldsymbol}

\newcommand\bP{\ensuremath{\bs{\mathrm{P}}}}

\newcommand{\ind}{{\sf 1}}

\renewcommand{\epsilon}{\varepsilon}


\newenvironment{myitemize}{%
\begin{list}{$\bullet$}%
        {%
        \setlength{\itemsep}{0.4em}%
        \setlength{\topsep}{0.5em}%
        \setlength\leftmargin{2.45em}%
        \setlength\labelwidth{2.05em}%
        \setlength{\labelsep}{0.4em}%
        }%
        }%
{\end{list}}

\renewenvironment{itemize}{
\begin{myitemize}}%
{\end{myitemize}}





 \newcommand{\be}[1]{\begin{equation}\label{#1}}
 \newcommand{\ee}{\end{equation}}

 \newcommand{\bl}[1]{\begin{lemma}\label{#1}}
 \newcommand{\el}{\end{lemma}}

 \newcommand{\br}[1]{\begin{remark}\label{#1}}
 \newcommand{\er}{\end{remark}}

 \newcommand{\bt}[1]{\begin{theorem}\label{#1}}
 \newcommand{\et}{\end{theorem}}

 \newcommand{\bd}[1]{\begin{definition}\label{#1}}
 \newcommand{\ed}{\end{definition}}

 \newcommand{\bcl}[1]{\begin{claim}\label{#1}}
 \newcommand{\ecl}{\end{claim}}

 \newcommand{\bp}[1]{\begin{proposition}\label{#1}}
 \newcommand{\ep}{\end{proposition}}

 \newcommand{\bc}[1]{\begin{corollary}\label{#1}}
 \newcommand{\ec}{\end{corollary}}

 \newcommand{\bpr}{\begin{proof}}
 \newcommand{\epr}{\end{proof}}

 \newcommand{\bi}{\begin{itemize}}
 \newcommand{\ei}{\end{itemize}}

\newcommand{\esp}[1]{\mathbb{E}\etc{#1}}
\newcommand{\ens}[1]{\left\{#1\right\}}

\newcommand{\prob}[1]{\mathbb{P}_0\etp{#1}}
\newcommand{\probb}[1]{\mathbb{P}\etp{#1}}
\newcommand{\etc}[1]{\left [#1 \right ]}
\newcommand{\etp}[1]{\left (#1 \right )}
\newcommand{\valabs}[1]{\left|#1 \right|}
\newcommand{\norme}[1]{\left\Vert#1 \right\Vert}

\newcommand{\unsur}[1]{\frac{1}{#1}}
\newcommand{\floor}[1]{\lfloor#1\rfloor}

\newcommand{\probbeta}[1]{\bP_\beta\etp{#1}}

\newcommand{\esperance}[2]{\mathbb{E}_{#1}\etc{#2}}

\newcommand{\undemi}{\frac12}
\newcommand{\un}[1]{1_{\etp{#1}}}

\newcommand{\PP}{\mathbb{P}}

\newcommand{\probz}[1]{\mathbb{P}_{z}\etp{#1}}
\newcommand{\probx}[1]{\mathbb{P}_{x}\etp{#1}}



\newcommand{\snt}{\mathbf{S}_{\floor{tN}}}


\title[Limit RW excursion with typical area]{Limit theorems for Random Walk excursion  conditioned to {\color{black}enclose} a
  typical area}

\author{Philippe Carmona and Nicolas Pétrélis}


\classno{60G50 (primary), 82B41 (secondary).
}

\extraline{The authors thank the Centre Henri Lebesgue ANR-11-LABX-0020-01 for creating an attractive mathematical environment.}

\begin{document}
\maketitle

\begin{abstract}
We derive a functional central limit theorem for the excursion of a random walk  conditioned 
on enclosing a prescribed geometric area. We assume that the increments of the random walk are integer-valued, centered, with 
a third moment equal to zero and a finite fourth moment.  This result complements the work of \cite{DKW13} where local central limit theorems are provided for the geometric area
of the excursion of a symmetric random walk with finite second moments. 

Our result turns out to be a key tool to derive the scaling limit of the \emph{Interacting Partially-Directed Self-Avoiding Walk} at criticality 
which is the object of a companion paper \cite{CarPet17a}.  This requires to derive a reinforced version of our result in the case of 
a random walk with Laplace symmetric increments.

\end{abstract}

\section{Introduction and main results}
{\color{black}We will use the notation $\N_0=\N\cup\{0\}$}. We let also $(X_i)_{i\geq 1}$ be an IID sequence of integer-valued centered random variables of law $\bbP$ and of finite variance $\sigma^2$. We denote by $S:=(S_i)_{i=0}^\infty$ the random walk starting from the origin ($S_0=0$) of increments 
$(X_i)_{i=1}^{\infty}$, i.e., 
\be{defStiS} S_n := \sum_{i=1}^n X_i,\qquad \quad  (n\in \N),
\ee
and the algebraic area enclosed in the random walk by  
\be{defAA}
A_n=A_n(S) := \sum_{i=1}^n S_i\qquad \quad (n\in \N).
\ee
We consider positive excursions of the random walk and therefore we define the stopping time
$$ \tilde \tau := \inf\ens{n\ge 1: S_n \le 0}.$$
The scaling limit of the random walk excursion $(S_i)_{i=0}^{\tilde\tau}$ conditioned on $\tilde \tau=n$ as $n\to \infty$
is proven in \cite{CarCha13} and  \cite{So}. To be more specific, we denote by $(e_t, 0\le t\le 1)$  the
standard Brownian excursion \cite{Imhof84,PitYor96}
normalized
to have duration/length 1, that is $R_e=1$, where
\be{stanexc}
 R_e := \inf\ens{t>0 : e_t=0} =1.
\ee
{\color{black} For  every $x\in \N$,  we let $\bbP_x$ be the law of $(x+S_n)_{n\geq 0}$}. 
If $S$ is sampled from 
$\bbP_0(\cdot \, |\, \tilde \tau=n)$, then
$$\displaystyle\Big(
  \frac{S_{\floor{sn}}}{\sigma\sqrt{n}}; 0\leq s\leq 1 \Big)\quad 
\text{converges in distribution to}\quad 
\displaystyle\big(e_s; 0\leq s\leq 1\big),$$
in the space of cadlag functions endowed with the Skorohod topology.

In the present paper, we are interested in the scaling limit of positive excursions conditioned on their area rather than on their length. 
This was for a long time an open issue, mostly because standard Gnedenko techniques 
to derive local central limit theorems (LCLT) are difficult to apply when working with a random path 
constrained to remain positive. This difficulty was recently overstepped in \cite[Theorem 1.1]{DKW13} 
which states that 
\begin{equation}\label{DKW}
\sup_{a\in \N} \Big| n^{3/2}\, \bbP_0(A_n(S)=a\, |\, \tilde \tau=n+1)- \frac{1}{\sigma} w_{e}\Big(\frac{a}{n^{3/2}}\Big)\Big |=o(1),
\end{equation}
where $w_{e}$ is the density of the area enclosed by $e$ that is $\int_0^1 e_s ds$. 

In the present paper, we bring the analysis some steps further by displaying a functional central limit theorem for 
the whole trajectory of a random walk excursion conditioned on enclosing a prescribed area.  For technical reasons, we shall assume from now on that 
\be{defhyprw}
\bbE(X_1^4)<\infty \quad \text {and} \quad \bbE(X_1^3)=0.
\ee 
We believe that our four main Theorems remain true under the finite variance assumption {\color{black} but we are not able to prove it} \footnote{Our method of proof requires extra smoothness 
for the LCLT in (\ref{eq:smoothnessstandartllt}--\ref{eq:smoothnessstandartllt22})  and in Proposition \ref{lltalgarea}}.

Before stating our main Theorem, we need a few more notations. We define the pseudo inverse of the sequence of algebraic areas $(A_n(S))_{n\in \N}$ by 
\be{defchi}
 \chi_s := \inf\ens{n\ge 1: A_n(S) \ge s},\quad\quad  (s\geq 0).
 \ee
To identify the limiting process of the rescaled excursions we need to introduce
$\cE=(\cE_t,t\ge 0)$ the
Brownian excursion normalized by its area, defined in Corollary~\ref{cor:defnormedexcursionarea}. It is the analogue of the standard excursion $(e_t,0\le t\le 1)$ (recall \eqref{stanexc}) except that it is  normalized
to enclose a unit  area, i.e.,
\be{defEE}
\int_0^{R_\cE} \cE_s\, ds = 1 \quad \text{with} \quad R_{\cE}=\inf\ens{t>0 : \cE_t=0},
\ee
rather than to have a unit length. Finally, we set for $t\geq 0$ and $s\in [0,1]$
\be{defae} A_t(\cE)= \int_0^t \cE_s\, ds,\qquad a_\cE(s) = \inf\ens{t>0 : 
A_t(\cE)>s}.
\ee

\begin{atheo}
  \label{thm:a}
Let  $(X_i)_{i\geq 1}$ be an IID sequence of integer-valued centered random variables of variance $\sigma^2$ and satisfying \eqref{defhyprw}. As $L\to \infty$, with $S$ sampled under $\prob{\cdot \mid A_{\tilde \tau} = L, S_{\tilde \tau} =0}$, the
process
\begin{itemize}
\item 
$\displaystyle\Big(
  \frac{S_{\floor{s\tilde \tau}}}{\sigma\sqrt{\tilde \tau}}; 0\leq s\leq 1 \Big)$
converges in distribution to
$\displaystyle\Big( \tfrac{1}{\sqrt{R_{\cE}}}\, \cE_{sR_{\cE}}; 0\leq s\leq 1\Big)$.
\item $ \displaystyle\unsur{\sigma L^{1/3}} \big(S_{\chi_{sL}}; 0\le
  s \le 1\big)$
converges in distribution  to 
$ \displaystyle\big(\cE_{a_\cE(s)}; 0\le s\le 1\big)$.
\end{itemize}
\end{atheo}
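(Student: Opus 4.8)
The plan is to condition first on the realised length $\tilde\tau$ of the excursion and to view the law of interest as a mixture over the values of $\tilde\tau$. Writing $F_m(S):=F\big(S_{\floor{sm}}/(\sigma\sqrt m);\,0\le s\le1\big)$ for a bounded continuous functional $F$ on Skorokhod space and $\mu_L(n):=\bbP_0(\tilde\tau=n\mid A_{\tilde\tau}=L,\,S_{\tilde\tau}=0)$, one has
\[
\bbE_0\big[F_{\tilde\tau}(S)\mid A_{\tilde\tau}=L,\,S_{\tilde\tau}=0\big]=\sum_{n\ge1}\mu_L(n)\;\bbE_0\big[F_{n}(S)\mid \tilde\tau=n,\,A_{\tilde\tau}=L,\,S_{\tilde\tau}=0\big].
\]
The proof then splits into three tasks: (i) identify the weak limit of $\mu_L$ after the rescaling $n\mapsto n/L^{2/3}$; (ii) for $n$ of order $L^{2/3}$, upgrade the known scaling limit of the length-conditioned excursion to a scaling limit of the doubly conditioned one; and (iii) recombine the pieces and recognise the limiting mixture as the process built from $\cE$ in Corollary~\ref{cor:defnormedexcursionarea}.

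For (i) the essential input is a joint local limit theorem for $(\tilde\tau,A_{\tilde\tau})$ on the event $\{S_{\tilde\tau}=0\}$. Combining a first-return estimate $\bbP_0(\tilde\tau=n,\,S_{\tilde\tau}=0)\sim\kappa\,n^{-3/2}$ with the pinned counterpart of the local central limit theorem~\eqref{DKW} — which is Proposition~\ref{lltalgarea} — one obtains
\[
\bbP_0\big(\tilde\tau=n,\,A_{\tilde\tau}=L,\,S_{\tilde\tau}=0\big)=\frac{1}{n^{3}}\,\big(\Psi(L\,n^{-3/2})+o(1)\big)
\]
uniformly for $n\in[aL^{2/3},bL^{2/3}]$ with $0<a<b<\infty$, where $\Psi$ is an explicit density built from $\kappa$ and $w_e$, together with an $n$-summable majorant valid for all $n$. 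Summing over $n$ yields $\bbP_0(A_{\tilde\tau}=L,\,S_{\tilde\tau}=0)\sim C\,L^{-4/3}$ and the convergence in distribution, under the conditioned law, of $\tilde\tau/L^{2/3}$ to an explicit law which the construction of $\cE$ in Corollary~\ref{cor:defnormedexcursionarea} identifies as that of $\sigma^{-2/3}R_{\cE}$. Obtaining the summable majorant away from the bulk is the delicate point: one has to rule out $\tilde\tau\ll L^{2/3}$ — where the prescribed area is too large for so short an excursion — and $\tilde\tau\gg L^{2/3}$ — where it is too small — and this requires moderate deviation bounds on $A_n$ under the positivity constraint.

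For (ii), for each fixed $n$ the excerpt already gives that $\big(S_{\floor{sn}}/(\sigma\sqrt n);0\le s\le1\big)$ converges under $\bbP_0(\cdot\mid\tilde\tau=n)$ to the standard excursion $e$. It remains to impose the extra constraint $A_n=L$ with $L/(\sigma n^{3/2})\to\alpha$ and to show that the limit becomes $e$ conditioned on $\int_0^1 e_s\,ds=\alpha$, rescaled by the deterministic map that turns a length-one excursion of area $\alpha$ into a unit-area one. The finite-dimensional distributions are read off from a multidimensional local limit theorem for the vector $(S_{\floor{t_1 n}},\dots,S_{\floor{t_k n}},A_n)$ on $\{\tilde\tau=n\}$; dividing by the marginal density of $A_n$ supplied by~\eqref{DKW} produces exactly the conditional density of $\int_0^1 e$ given $(e_{t_1},\dots,e_{t_k})$. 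Tightness of the doubly conditioned family has to be established separately — a conditioning event of probability of order $n^{-3/2}$ does not transfer tightness automatically — through an a priori modulus-of-continuity bound that survives the conditioning.

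Combining (i) and (ii) gives convergence of the mixture, and by the construction of $\cE$ in Corollary~\ref{cor:defnormedexcursionarea} the limit is $\big(\tfrac{1}{\sqrt{R_{\cE}}}\,\cE_{sR_{\cE}};0\le s\le1\big)$, which is the first assertion. The second assertion follows from the first by the continuous mapping theorem applied to the area reparametrisation $s\mapsto S_{\chi_{sL}}$: this is a measurable functional of the rescaled trajectory together with $\tilde\tau$ and $L$, and it is continuous at every excursion path whose integral is strictly increasing on the interior of its support — a property $\cE$ enjoys almost surely; feeding in the first assertion together with $\tilde\tau/L^{2/3}\to\sigma^{-2/3}R_{\cE}$ turns the normalisation by $(\tilde\tau,\sigma\sqrt{\tilde\tau})$ into the normalisation by $(R_{\cE},\sigma L^{1/3})$ that yields $\big(\cE_{a_\cE(s)};0\le s\le1\big)$. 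I expect the principal obstacle to be the pair of local limit theorems — the one uniform in the area parameter needed in (i) and its joint-with-positions refinement needed in (ii) — for a positivity-constrained random walk: it is precisely the demand that these hold with enough smoothness to be summed over the length and combined with several coordinates that forces the moment hypotheses~\eqref{defhyprw}.
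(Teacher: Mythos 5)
Your outline follows essentially the same route as the paper: decompose the conditional law as a mixture over $\tilde\tau$, prove a local limit theorem for the area and positions jointly under the pinned-and-length conditioning $\bbP_0(\,\cdot\mid\tilde\tau=N,S_N=0)$, establish the $L^{-4/3}$ asymptotic for $\bbP_0(A_{\tilde\tau}=L,S_{\tilde\tau}=0)$ together with its tails, prove tightness separately, and pass to the second assertion by continuity of the area-reparametrisation map. These are the paper's Propositions~\ref{pro:lltbridge} and \ref{pro:cvconjointetaurw}, Lemmas~\ref{eq:asympataustau} and \ref{lem:tight}, and Section~\ref{sub:timechange}.

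One misattribution is worth correcting, because it conceals the part of the proof that is actually hard. Proposition~\ref{lltalgarea} is \emph{not} a pinned counterpart of \eqref{DKW}: it is a classical, unconstrained Gaussian LCLT for the pair $(S_n,A_n)$ with no positivity requirement, and in the paper it appears only as an auxiliary bound inside the tail estimate of Lemma~\ref{eq:asympataustau}. The pinned, multi-point LCLT you need in your steps (i)--(ii) is Proposition~\ref{pro:lltbridge}, and proving it under the constraint $\{\tilde\tau=N,S_N=0\}$ is the paper's principal technical contribution: standard Fourier methods break down when one imposes a positivity constraint, so the paper develops a multidimensional generalization of the Davis--McDonald scheme (Section~\ref{sec:macdonald}), which in turn requires uniform discrete-gradient bounds on $\prob{A_N=a,S_N=y,\tilde\tau>N}$ obtained by releasing the positivity constraint on small time windows near the endpoints. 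Your proposal flags this as ``the principal obstacle'' but offers no way through it, so as written there is a genuine gap at exactly the step where the moment hypotheses~\eqref{defhyprw} enter; the rest of your strategy is sound. A smaller point: in (ii) you must keep the endpoint pinning $S_n=0$ in the reference conditioning — the excursion-to-$e$ convergence used there comes from the law $\bbP_0(\,\cdot\mid\tilde\tau=n,S_n=0)$, as in \cite[Corollary~2.5]{CarCha13} — otherwise the Markov decomposition at the marked times does not close.
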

Both convergences in Theorem \ref{thm:a} occur in the set of cadlag functions  endowed with the Skorohod topology.  In the first convergence the limiting process is a Brownian excursion normalized by its  area, also rescaled in time by its extension and in space by the root of this extension.  We actually prove a bit more in the paper since we establish
the convergence  of 
$(\frac{\tilde \tau}{L^{2/3}})_{L\in \N}$ towards $R_{\cE}$ jointly with the convergence of processes. This is no surprise since the typical area under an excursion of length $N$ should be
approximately $N^{3/2}$ therefore, the typical length of an excursion
of area $L$ should be approximately $L^{2/3}$ and the typical height, with a diffusive scaling, should be thus $L^{1/3}$. As a consequence, the following convergence also holds true and may 
appear more useful to the reader: 
$$\displaystyle\Big(
  \frac{S_{\floor{s L^{2/3}}\wedge \tilde \tau}}{\sigma L^{1/3}}; \  s\geq 0 \Big)
\quad \text{converges in distribution to} \quad 
\displaystyle\Big( \cE_{s\wedge R_{\cE}}; \  s\geq 0 \Big).$$

For the second convergence in Theorem \ref{thm:a}, we apply a non-linear time-change to the excursion, i.e., for every $s\in [0,1]$
we observe the excursion at the moment its area reaches $sL$. The resulting process (rescaled in space by $L^{1/3}$)
converges towards a Brownian excursion normalized by its area, subject to a similar time-change.

With Theorem \ref{thm:c} below, we characterize the limiting processes displayed in Theorem \ref{thm:a} and prove that their distribution are those of some Bessel bridge excursions, normalized by their length.
 To that purpose, we let  $Y$ be distributed as $(|B_{a_B(s)}|)_{0\le s \le 1}$
 with 
$$a_B(s) = \inf\Big\{t>0 : \int_0^t \valabs{B_u}\, du >s\Big\},$$
{\color{black} and we denote by $C_{1/3}$ the $1/3$-stable regenerative set (see e.g.  \cite[Appendix A]{CSZ} for the 
details of its construction).}
\begin{atheo}
  \label{thm:c}
  \begin{enumerate}
  \item $Y$ is distributed as $\big(\big(
  \frac{3}{2} \rho_{t}\big)^{2/3},t\in[0,1]\big)$ where 
 $(\rho_t)_{t\in [0,1]}$ is a Bessel process  of dimension
 $4/3$. In particular the zero set $Z(Y)=\{s\in [0,1]\colon\,
 Y(s)=0\}$ is distributed as  $C_{1/3}$. Furthermore, the law of $Y$ conditioned by $Y_1=0$ is
 well defined as the law of $\etp{\etp{\frac{3}{2}
     b_{t}}^{2/3},t\in[0,1]}$ with $b$ a Bessel bridge of dimension
 $4/3$, and the law of $Z(Y) \cap \etc{0,1}$ conditioned by $Y_1=0$ is
 the law of  $\tilde{C}_{1/3}=C_{1/3}\cap \etc{0,1}$
 conditioned on $1\in C_{1/3}$.

\item The process $(\cE_{a_\cE(s)},0\le s\le 1)$ has the law $\pi^Y$
  of the excursion of $Y$ normalized by its length.
  \end{enumerate}
\end{atheo}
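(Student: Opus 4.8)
The two items are proved by independent arguments; the second uses only the description of $Y$ as a time change of reflected Brownian motion.

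\emph{Item (1).} The plan is to read off the SDE of $Y$ from a time change and Tanaka's formula, and then to recognise it as a power of a Bessel process. Put $A_t:=\int_0^t|B_u|\,du$: it is continuous, strictly increasing, with $A_0=0$ and $A_\infty=\infty$, and $a_B=A^{-1}$, so $a_B$ is a continuous time change with $a_B'(s)=1/|B_{a_B(s)}|=1/Y_s$; moreover $Y$ spends zero time at $0$, since the zero set of $B$ carries no $dA$-mass. Writing $|B_t|=\beta_t+L_t$ by Tanaka ($\beta$ a Brownian motion, $L$ the local time at $0$) and time-changing, the continuous local martingale $\beta_{a_B(s)}$ has bracket $\int_0^sY_r^{-1}\,dr$ and $\ell_s:=L_{a_B(s)}$ is non-decreasing and flat off $\{Y=0\}$; hence $dY_s=Y_s^{-1/2}\,dW_s+d\ell_s$ with $Y\ge0$, for a Brownian motion $W$. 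A one-line Itô computation then shows that $\rho_s:=\tfrac23Y_s^{3/2}$ solves $d\rho_s=dW_s+\tfrac{1}{6\rho_s}\,ds$ with $\rho\ge0$, the SDE of a Bessel process of dimension $4/3$, the reflection term dropping because $Y^{1/2}$ vanishes on $\{Y=0\}$ and $\int_0^s\rho_r^{-1}\,dr=\tfrac32\int_0^{a_B(s)}|B_t|^{-1/2}\,dt<\infty$ a.s. By uniqueness in law for the Bessel SDE, $\rho$ is a $\mathrm{BES}(4/3)$ and $Y=(\tfrac32\rho)^{2/3}$ in law. Since $x\mapsto(\tfrac32x)^{2/3}$ is a homeomorphism of $[0,\infty)$ fixing $0$, $Z(Y)=Z(\rho)$; and because, for $0<\delta<2$, the inverse local time of $\mathrm{BES}(\delta)$ at $0$ is a stable subordinator of index $1-\delta/2$, the set $Z(\rho)$ is the $(1-\tfrac23)=\tfrac13$-stable regenerative set $C_{1/3}$ of \cite[Appendix~A]{CSZ}. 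Finally, conditioning $Y$ on $\{Y_1=0\}$ amounts, through this homeomorphism, to conditioning $\rho$ on $\{\rho_1=0\}$, which is well defined (as $\delta<2$) and gives the $\mathrm{BES}(4/3)$ bridge from $0$ to $0$ on $[0,1]$; its zero set is the zero set of a Bessel bridge, which by the classical description equals $\tilde C_{1/3}=C_{1/3}\cap[0,1]$ conditioned on $1\in C_{1/3}$.

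\emph{Item (2).} The plan is to transport Itô's excursion theory through the time change. Recall that $Y$ is reflected Brownian motion $|B|$ run along $a_B$, the inverse of the strictly increasing additive functional $A_t=\int_0^t|B_u|\,du$. Let $n$ and $\tilde n$ be Itô's excursion measures of $|B|$ and of $Y$. Since a single excursion of $Y$ is, by construction, a single excursion of $|B|$ reparametrised by its own area, one has $\tilde n=T_*n$, where $T$ maps an excursion $\epsilon$ to $\epsilon\circ a_\epsilon$ with $a_\epsilon(u):=\inf\{t:\int_0^t\epsilon(r)\,dr>u\}$. The lifetime of $T(\epsilon)$ equals the area $\int_0^{\zeta(\epsilon)}\epsilon(r)\,dr$ of $\epsilon$, so pushing the disintegration of $n$ over the area functional forward by $T$ produces the disintegration of $\tilde n=T_*n$ over its lifetime; in particular $\tilde n(\,\cdot\mid\zeta=1)=T_*\bigl(n(\,\cdot\mid\text{area}=1)\bigr)$. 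By Corollary~\ref{cor:defnormedexcursionarea}, $n(\,\cdot\mid\text{area}=1)$ is the law of $\cE$; since $\cE$ has unit area, $T(\cE)=(\cE_{a_\cE(s)})_{0\le s\le1}$, a path of lifetime $1$. As $\pi^Y$, the law of the excursion of $Y$ normalised by length, is $\tilde n(\,\cdot\mid\zeta=1)$, this yields $\pi^Y=\mathrm{Law}\bigl((\cE_{a_\cE(s)})_{0\le s\le1}\bigr)$, which is the claim. Equivalently, through the homeomorphism of Item (1), $\pi^Y$ is the image under $x\mapsto(\tfrac32x)^{2/3}$ of the $\mathrm{BES}(4/3)$ excursion normalised by its length.

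I expect the main obstacle to be the rigorous version of $\tilde n=T_*n$ in Item (2) --- that Itô's excursion measure commutes with the time change and that the per-excursion area map carries one lifetime disintegration onto the other --- together with the boundary analysis at $0$ behind the SDE $dY_s=Y_s^{-1/2}\,dW_s+d\ell_s$ and the well-posedness of the degenerate conditionings in Item (1). Carrying these out through the $\mathrm{BES}(4/3)$ representation of Item (1), whose Itô excursion measure is explicit, should make the remaining bookkeeping routine.
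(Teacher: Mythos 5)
Your proposal is correct, and for item (2) it follows essentially the paper's route: the paper's Lemma~\ref{lem:excy} is exactly your identity $n^Y=T_*n_+$ (proved there via the Master Formula), and the paper then combines it with the area-disintegration of $n_+$ from Corollary~\ref{cor:defnormedexcursionarea} and the disintegration of $n^\rho$ by lifetime, which is precisely your ``push the area-disintegration forward to the lifetime-disintegration'' step phrased through the independence property of Propositions~\ref{pro:indeshaperho}--\ref{pro:equivshapedisin} (these propositions are also what make $\pi^Y=\tilde n(\cdot\mid\zeta=1)$ a legitimate definition). Where you genuinely diverge is item (1): the paper obtains $\rho=\tfrac23|B_{a}|^{3/2}\sim\mathrm{BES}(4/3)$ in one line by invoking \cite[Ch.~XI, Prop.~(1.11)]{RY91} (a power of a Bessel process is a time-changed Bessel process, and the relevant clock for $q=3/2$ applied to $|B|=\mathrm{BES}(1)$ is exactly $A_t=\int_0^t|B_u|\,du$), whereas you re-derive this fact from scratch via Tanaka, Dambis--Dubins--Schwarz and It\^o. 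Your computation is right ($\tfrac{\delta-1}{2}=\tfrac16$ gives $\delta=4/3$), but it is the harder road: it forces you to justify It\^o's formula for the non-$C^2$ map $x\mapsto\tfrac23x^{3/2}$ at the boundary and uniqueness in law for the degenerate SDE $d\rho=dW+\tfrac{1}{6\rho}\,ds$ with $\delta\in(1,2)$ (best handled by passing to the squared process and Yamada--Watanabe) --- exactly the technicalities the citation packages away. You flag these points yourself, so I do not count them as gaps. Note also that you actually say more about item (1) than the paper's proof does: the statements about $Z(Y)\sim C_{1/3}$ and the conditioned law are left implicit there as standard Bessel facts, and your reduction through the homeomorphism $x\mapsto(\tfrac32x)^{2/3}$ plus the stable-subordinator description of the $\mathrm{BES}(\delta)$ zero set is the right way to make them explicit.
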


We conclude this section with a brief outline of the rest of the paper.  With Section \ref{sec3}  we apply our main result in the framework of 
\footnote{Interacting Partially Directed Self Avoiding Walk}  IPDSAW a model introduced in \cite{ZL68} to study the collapse transition of a polymer surrounded by a poor solvent.
 We state in particular Theorem \ref{thm:ab}, which is an advanced version of Theorem \ref{thm:a} since it deals also with the random walk  $\bar S:=(\bar S_i)_{i=0}^\infty$
obtained by switching the sign of every other increment of $S$. Theorem \ref{thm:ab} turns out to be the key tool to derive the scaling limit 
of IPDSAW at
criticality, which is the object of a companion
paper \cite{CarPet17a}.
 In Section \ref{ps}, we provide a sketch for the proof of Theorem \ref{thm:a}, shedding some light on the 
links between our proof and some results from \cite{MR1340834} and from \cite{DKW13}.    Section \ref{sec:macdonald} is dedicated to the multi-dimensional generalization of a
non-standard technique initially displayed in \cite{MR1340834} to derive LCLT. Sections \ref{sec:preuvea}, \ref{excnorm} and \ref{excnorm2} are dedicated to the proof of Theorem \ref{thm:a}, \ref{thm:ab}, and  \ref{thm:c}, respectively.

\section{Extension and Application}\label{sec3}
Recall \eqref{defStiS} and define $\bar S:=(\bar S_i)_{i=0}^\infty$ by $\bar S_0=0$ and 
\be{defStiS2} 
\bar{S}_n := \sum_{i=1}^n
(-1)^{i+1} X_i\quad \text{for}\quad n\in \N.
\ee
We recall \eqref{defae} and that $\cE$ is a Brownian excursion normalized by its area. We let $B$ be a standard Brownian motion  
independent of $\cE$.
\begin{atheo}
  \label{thm:ab}
Let  $(X_i)_{i\geq 1}$ be an IID sequence of integer-valued centered random variables of variance $\sigma^2$ and satisfying \eqref{defhyprw}.  As $L\to \infty$, when sampled under $\prob{\cdot \mid A_{\tilde \tau} = L, S_{\tilde \tau} =0}$, the
process
\begin{itemize}
\item 
$\displaystyle\Big(
  \frac{S_{\floor{s\tilde \tau}}}{\sigma\sqrt{\tilde \tau}},  \frac{\bar{S}_{\floor{s\tilde \tau}}}{\sigma\sqrt{\tilde \tau}}; 0\leq s\leq 1 \Big)$
converges in distribution to
$\displaystyle\Big( \tfrac{1}{\sqrt{R_{\cE}}}\, \cE_{sR_{\cE}}, \tfrac{1}{\sqrt{R_{\cE}}}\,  B_{sR_{\cE}}; 0\leq s\leq 1\Big)$,
\item $ \displaystyle\unsur{\sigma L^{1/3}} \big(S_{\chi_{sL}},\bar{S}_{\chi_{sL}}; 0\le
  s \le 1\big)$
converges in distribution  to 
$ \displaystyle\big(\cE_{a_\cE(s)},B_{a_\cE(s)}; 0\le s\le 1\big)$.
\end{itemize}
\end{atheo}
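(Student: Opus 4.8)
The plan is to bootstrap Theorem \ref{thm:a} by treating the extra coordinate $\bar S$ as an independent ``orthogonal'' degree of freedom and showing that, conditionally on the event $\{A_{\tilde\tau}=L,\ S_{\tilde\tau}=0\}$, the pair $(S,\bar S)$ decouples in the scaling limit into $(\cE,B)$. The key observation is that the conditioning event is expressed entirely in terms of $(S_n)$ and $(A_n(S))$, which depend on the increments $X_1,\dots,X_{\tilde\tau}$ but \emph{not} on the signs $(-1)^{i+1}$ that define $\bar S$; so once the trajectory of $S$ (hence $\tilde\tau$ and all the increments $X_i$) is fixed, $\bar S$ is a deterministic functional of the increments. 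The heart of the matter is therefore a joint local limit theorem: I would prove a version of Proposition \ref{lltalgarea} (the multidimensional LCLT from Section \ref{sec:macdonald}) for the triple $(S_n, A_n(S), \bar S_n)$, showing that the conditional law of $\bar S_{\lfloor un\rfloor}/(\sigma\sqrt n)$ given $\tilde\tau=n$, $A_n=L$, $S_n=0$ converges, along the diffusive scaling, to a standard Brownian motion independent of the excursion structure carried by $S$. This is the point where the fourth-moment hypothesis \eqref{defhyprw} and the extra smoothness of the LCLT are genuinely needed, exactly as flagged in the footnote after \eqref{defhyprw}.

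The steps, in order, would be: (i) Condition on $\tilde\tau=n$ and on the whole trajectory $(S_i)_{i=0}^n$ with $A_n=L$, $S_n=0$; under this conditioning the increments $X_1,\dots,X_n$ are fixed and $\bar S_k=\sum_{i=1}^k(-1)^{i+1}X_i$ is determined. (ii) Use Theorem \ref{thm:a} (together with the joint convergence $\tilde\tau/L^{2/3}\to R_\cE$ established there) to reduce to the regime $n\asymp L^{2/3}$ and to get tightness and convergence of the $S$-coordinate. (iii) For the $\bar S$-coordinate, show that $\bar S$ is asymptotically a Brownian motion \emph{independent} of the excursion: here one writes $\bar S_k$ as a sum over the alternating-sign increments and argues that the conditioning (which pins down partial sums of the $X_i$, not the alternating partial sums) leaves an asymptotically free Gaussian fluctuation in the $\bar S$ direction. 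Concretely I would establish the joint LCLT for $(S_{\lfloor t_1 n\rfloor},\dots; A_n; \bar S_{\lfloor u_1 n\rfloor},\dots; \bar S_n)$ by the McDonald-type characteristic-function method of Section \ref{sec:macdonald}, read off that the limiting Gaussian has a block-diagonal covariance (the $S$-block giving the bridge/excursion structure, the $\bar S$-block giving independent Brownian increments, with vanishing cross-covariance because $\bE(X_1^2 \cdot (\pm1)) $-type cross terms telescope), and then combine this finite-dimensional convergence with tightness (Kolmogorov's criterion, using $\bE(X_1^4)<\infty$) to upgrade to functional convergence in Skorohod space. (iv) Finally, for the second bullet, apply the continuous mapping theorem with the (a.s.\ continuous) time-change $s\mapsto \chi_{sL}$, exactly as in the deduction of the second bullet of Theorem \ref{thm:a} from the first, carrying the $\bar S$-coordinate along; since $a_\cE$ is a.s.\ continuous and $\cE, B$ are a.s.\ continuous, $(\cE_{a_\cE(s)}, B_{a_\cE(s)})$ is the correct limit.

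The main obstacle is step (iii): proving the \emph{joint} LCLT for $(S_n, A_n, \bar S_n)$ with enough uniformity, and identifying the limiting covariance structure, in particular verifying that the $\bar S$-coordinate decouples. The subtlety is that $\bar S_n$ and $S_n$ are built from the \emph{same} increments, so independence in the limit is a cancellation phenomenon that has to be extracted from the characteristic function; one must check that the relevant quadratic form in the Fourier variables splits, which uses $\bE(X_1^2)=\sigma^2$, $\bE(X_1^3)=0$ and $\bE(X_1^4)<\infty$ to control the error terms uniformly over the range of areas $a$ of order $n^{3/2}$. Once this joint LCLT is in hand — a routine but technically heavy extension of the one-coordinate result already proved for Theorem \ref{thm:a} — the passage to Theorem \ref{thm:ab} is a matter of repeating the arguments of Sections \ref{sec:preuvea}--\ref{excnorm} verbatim with the extra Brownian coordinate carried through, which is why the paper devotes a short dedicated section (\ref{excnorm}) to it rather than a full independent proof.
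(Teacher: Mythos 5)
Your proposal has a genuine gap at exactly the place you identify as the ``main obstacle'': you plan to establish the joint convergence in distribution of $(\mathbf{S}_{\lfloor tN\rfloor}, A_N, \bar{\mathbf{S}}_{\lfloor tN\rfloor})/N^{\cdot}$ under the conditioning $\{\tilde\tau=N, S_N=0\}$ by a ``McDonald-type characteristic-function method'', but Proposition~\ref{thm:cvloiplusapproxregegallclt} is not a characteristic-function method. It is a device that \emph{takes the convergence in distribution as a hypothesis} and upgrades it to a local limit theorem; it does not produce the CLT for you. So you still need to prove that the rescaled triple, under the excursion/bridge conditioning, converges jointly to $(A_1(e),\mathbf e_t,\mathbf B_t)$, and the path-positivity constraint makes a direct Fourier computation intractable --- which is precisely the obstacle the paper names in Section~\ref{excnorm} (``Proving that \dots $\bar{\mathbf S}_{\lfloor tN\rfloor}/\sqrt N$ converges in distribution towards $\mathbf B_t$ is difficult. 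For this reason we cannot apply Proposition~\ref{thm:cvloiplusapproxregegallclt} directly \dots'').

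The idea you are missing is Bolthausen's time-shift argument. The paper first replaces the excursion conditioning by the meander conditioning $\{\tilde\tau>N\}$ and proves the joint CLT there (Proposition~\ref{pro:lctlboltdecorr}) via Lemma~\ref{lem:astucebolthausen}: one introduces $T_n=\inf\{k: S_{k+i}\ge S_k,\ 1\le i\le n\}$, which depends only on the $S$-trajectory, and observes that conditioning on positivity is the same as shifting \emph{both} $S$ and $\bar S$ by $T_n$. This reduces the conditioned pair to the unconditioned pair $(S,\bar S)$, for which two-dimensional Donsker gives a Brownian motion and an independent Brownian motion (here is where the alternating signs create the decoupling, not in any Fourier estimate). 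This gives Proposition~\ref{pro:llttausupncomplett} for the meander; the excursion LCLT Proposition~\ref{pro:llttausupncomplet} is then deduced by applying Markov's property at time $t_dN$ and the absolute-continuity convolution formula~\eqref{acr} relating the excursion density $\phi_t$ to two meander densities. Your step~(iii), as written, would stall before either of these devices is invoked. The rest of your outline --- reducing to $n\asymp L^{2/3}$ via $\tilde\tau/L^{2/3}\to R_\cE$, lifting to process level by tightness, and applying the continuous mapping theorem with the time change $a_\cE$ for the second bullet --- is consistent with the paper's route, but it only becomes available once the joint meander CLT and the meander-to-excursion passage are in place.
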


 We refer to \cite{CNGPT18} for a recent topical review about IPDSAW, but let us introduce the model in a few words.
The IPDSAW is a polymer model whose configurations (of size $L\in \N$) are given by the 
$L$-step trajectories of a self-avoiding walk taking unit steps up, down, and right. Each such configurations is associated with an Hamiltonian given by $\beta>0$ (the coupling parameter) times the number {\color{black} of neighboring sites of the lattice visited non-consecutively by the walk. } The model is known to undergo a phase transition at some $\beta_c$ 
between an extended phase ($\beta<\beta_c$) and a collapsed phase $(\beta\geq\beta_c)$. In the extended phase, a typical configuration rescaled 
in time by $L$ and in space by $\sqrt{L}$  
converges in law towards a 1-dimensional Brownian motion. Inside the collapsed phase ($\beta>\beta_c$) the appropriate rescaling of a typical path is $\sqrt{L}$
in both time and space and the convergence  occurs in probability  towards a deterministic two-dimensional Wulff shape. 

At criticality  ($\beta=\beta_c$) deriving the scaling limit of 
a typical path is more involved since the limiting object turns out to be a  two-dimensional truly random subset of $\R^2$. 
This is the object of a companion paper \cite{CarPet17a} and the proof heavily relies on the fact that the critical-IPDSAW 
enjoys a random walk representation which allows us to reconstruct a typical trajectory  with the help of two random processes:  
\begin{enumerate}
\item  a profile whose law is that of a random walk conditioned on enclosing an area $\sim L$ 
\item a center-of-mass walk obtained by switching the sign of every other increments of the profile. 
\end{enumerate}
To be more specific, the random walk representation provides an IID sequence of random variables $(X_i)_{i\in \N}$ with a symmetric Laplace law $\bP_\beta$, i.e.,
\be{defincre}
\probbeta{X_1=k}= \unsur{c_\beta} e^{-\frac{\beta}{2} \valabs{k}},\ 
k\in \Z \quad\text{with}\quad c_\beta := \frac{1+ e^{-\frac\beta2}}{1- e^{-\frac\beta2}}.
\ee
We define $S$ and $\bar S$ with those increments (recall \eqref{defStiS} and \eqref{defStiS2}) and the geometric area enclosed by $S$ after $n$ steps 
is $ G_n := \sum_{i=1}^n \valabs{S_i}$ and its perturbation $K_n := n + G_n$. We define the pseudo inverse of $(K_n)_{n\in \N}$ by
$$ \xi_s := \inf\ens{i\ge 0 : K_i \ge s},\quad \quad (s\ge 0).$$
The profile and center-of-mass walk mentioned in (1) and (2) above are obtained by  sampling  $S$ and $\bar S$ under  $\bP_\beta(\cdot\, |\, K_{\xi_L}=L, S_{\xi_L+1}=0)$.
Therefore, deriving the scaling limit of critical-IPDSAW  requires to
prove the convergence (as $L\to \infty$) of the joint law of $S$ and
$\bar S$ rescaled in time by $L^{2/3}$ and in space by $L^{1/3}$  and
to identify  its limit. The fact that the conditioning involves the
geometric area enclosed by $S$ brings us to decompose each path into
excursions (away from the origin) simply because the geometric  area
of the whole trajectory is the sum of the modulus of the algebraic
area enclosed by each excursion.  Moreover, the area enclosed by an
excursion is a heavy tailed random variable (see e.g. Lemma \ref{eq:asympataustau}).
Therefore one can reconstruct a fraction of the whole trajectory arbitrary close to $1$ by considering finitely many excursions enclosing large areas. This is the reason why Theorem \ref{thm:ab} is a cornerstone of the proof of the main result in \cite{CarPet17a}.

The last subtlety that we must take into account comes from the fact that the excursions of $S$ (whose increments are defined  in \eqref{defincre}) do not necessarily start from the origin. 
This is the reason why we have refined  the definition of excursions by introducing another slightly different sequence of stopping times, i.e., 
$\tau_0=0$ and  for $j\geq 1$
\be{deftauseq}
 \tau_j:=\inf\ens{i>\tau_{j-1} : S_{i-1}\neq 0,\,  S_{i-1} S_i \le 0}.
 \ee

The geometric nature of the increments of $S$ yields that the excursions 
$$\big\{(V_{\tau_{j-1}+i})_{i=0}^{\tau_j-\tau_{j-1}-1}, \ \ j\geq 2\big\}$$  
are  IID and that for $j\geq 2$ their initial point  $V_{\tau_{j-1}}$  has law  $\mu$ defined by
$$ \mu(k) = \frac{1-e^{-\frac\beta2}}{2} e^{-\frac\beta2\valabs{k}}
\un{k\neq 0} + (1-e^{-\frac\beta2}) \un{k=0}.$$

Let ${\bf P}_{\beta,\mu}$ denote the distribution of
the random walk $S$ of increments $(X_i)_{i\geq 1 }$ defined in \eqref{defincre} and such that $S_0$ has law  $\mu$. Let also $\sigma_\beta$ be the variance of the random variable $X_1$
of law ${\bf P}_\beta$ (or ${\bf P}_{\beta,\mu}$). 
\begin{atheo}
  \label{thm:b}
As $L\to \infty$ and when sampled under ${\bf P}_{\beta,\mu}(. \mid \tau + G_{\tau -1}= L)$, 
$$ \unsur{\sigma_\beta L^{1/3}} (\valabs{S_{\xi_{sL}}},\bar{S}_{\xi_{sL}}; 0\le
  s \le 1)\  \text{converges in distribution  to}\  
 (\cE_{a_\cE(s)},B_{a_\cE(s)}; 0\le s\le 1).$$
\end{atheo}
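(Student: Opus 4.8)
The plan is to recognise Theorem~\ref{thm:b} as the second convergence of Theorem~\ref{thm:ab}, transported to three new features: the symmetric Laplace increments \eqref{defincre}, a starting point of law $\mu$ instead of the origin, and the conditioning through the perturbed geometric quantity $\tau+G_{\tau-1}$ (with the associated time-change $\xi$) instead of the algebraic area $A_{\tilde\tau}$ (with time-change $\chi$). The Laplace law is admissible for Theorem~\ref{thm:ab} because, being symmetric, it has vanishing third moment, and it has all moments finite, so \eqref{defhyprw} holds. The whole proof amounts to showing that the passage between the two statements consists only of boundary effects, each negligible against the diffusive scales $L^{1/3}$ in space and $L^{2/3}$ in time.

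\textbf{Step 1: reduction to a positive excursion and absorption of the endpoints.}
The event $\{\tau+G_{\tau-1}=L\}$ is invariant under $S\mapsto -S$ and $\mu$ is symmetric, so conditionally on the sign of $(S_i)_{i=1}^{\tau-1}$ the process $\valabs{S}$ is distributed as a positive excursion; it suffices to treat that case. For a positive excursion one has $S_1,\dots,S_{\tau-1}>0$, hence $G_n=A_n(S)$ for every $n\le\tau-1$, and the conditioning becomes $\{\tau+A_{\tau-1}(S)=L\}$. Under this conditioning $\tau$ is of order $L^{2/3}$: a short excursion would require an atypically large single increment, which has exponentially small probability, whereas $\{\tau+A_{\tau-1}(S)=L\}$ has only polynomially small probability by the heavy-tail estimate underlying Lemma~\ref{eq:asympataustau}. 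Consequently the starting value $S_0$, which is $O(1)$ with exponential tails under $\mu$, and the terminal step $S_\tau\le 0$, which is an $O(1)$ undershoot (a single Laplace step must carry the walk from $S_{\tau-1}>0$ down to $\le 0$), are both negligible at scale $L^{1/3}$; in particular the difference between forcing the excursion to end exactly at $0$, as in Theorem~\ref{thm:ab}, and merely crossing $0$ disappears in the limit.

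\textbf{Step 2: local limit theorem for $\tau+G_{\tau-1}$ and matching the time-changes.}
Two further discrepancies remain. First, the conditioning is on an exact lattice value of $\tau+A_{\tau-1}(S)$ and not of $A_{\tilde\tau}(S)$; since $\tau=O(L^{2/3})$ these differ by $o(L)$, but a \emph{local} statement is needed to control the conditional law. It is obtained by decomposing over the finitely supported, exponentially weighted boundary data (sign, initial point $S_0$, undershoot $S_\tau$) and summing the corresponding shifts of Proposition~\ref{lltalgarea}; the same computation yields the asymptotics of ${\bf P}_{\beta,\mu}(\tau+G_{\tau-1}=L)$ and, with Theorem~\ref{thm:ab}, shows that the rescaled excursion under ${\bf P}_{\beta,\mu}(\cdot\mid\tau+G_{\tau-1}=L)$ has the same scaling limit as the one in Theorem~\ref{thm:ab}. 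Second, one must replace $\xi_{sL}=\inf\{i:\,i+G_i\ge sL\}$ by $\chi_{sL}=\inf\{n:\,A_n(S)\ge sL\}$: since $K_i=A_i(S)+i$ with $i\le\tau=O(L^{2/3})$, and since the area-parametrised excursion crosses a window of width $L^{2/3}$ in $o(L^{2/3})$ steps (its area increments are of order $L^{1/3}$), $\xi_{sL}$ and $\chi_{sL}$ differ by $o(L^{2/3})$ uniformly on compact subsets of $(0,1)$, granting an a priori modulus-of-continuity bound for the area-parametrised excursion of the kind already proved for Theorem~\ref{thm:ab}. Combined with the joint convergence $\tau/L^{2/3}\to R_\cE$ and the continuity of $s\mapsto a_\cE(s)$ — here one invokes, via Theorem~\ref{thm:c}, that $Z(Y)$ has zero Lebesgue measure, so that $\cE$ does not linger near its zeros — this produces the limits $\cE_{a_\cE(s)}$ for $\unsur{\sigma_\beta L^{1/3}}\valabs{S_{\xi_{sL}}}$ and $B_{a_\cE(s)}$ for $\unsur{\sigma_\beta L^{1/3}}\bar S_{\xi_{sL}}$, jointly; tightness in the Skorohod space is transported through the $O(1)$ and $o(L^{2/3})$ comparisons above from the tightness established for Theorem~\ref{thm:ab}.

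\textbf{Main obstacle.}
The delicate point is the \emph{uniformity} of the time-change comparison of Step~2 as $s\to0^+$ and $s\to1^-$. Near the two ends of the excursion the trajectory sits at height $O(1)$, the area accumulates slowly, and the correction $+i$ in $K_i=i+G_i$ is then of the same order as — or larger than — the area increments, so the crude ``lower order'' bound breaks down exactly where the rescaled process touches $0$. Overcoming this requires a dedicated equicontinuity estimate at the temporal boundary: for every $\epsilon>0$ one can choose $\delta>0$ so that, with probability tending to $1$ uniformly in $L$, the portions $i\le\delta L^{2/3}$ and $i\ge\tau-\delta L^{2/3}$ contribute at most $\epsilon L$ to both $G$ and the perturbation. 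This is the same type of boundary control already needed in the proof of Theorem~\ref{thm:ab}, and should be recyclable here.
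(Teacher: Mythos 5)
Your overall strategy --- symmetrize, reduce to a positive excursion, argue that boundary effects are negligible against the $L^{1/3}$/$L^{2/3}$ scales, then invoke Theorem~\ref{thm:ab} --- is aimed in the right direction, and you correctly list the three differences with Theorem~\ref{thm:ab}. But you miss what the paper identifies as the \emph{key} observation, and the substitutes you offer are not enough on their own.

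The paper's central shortcut is Lemma~\ref{lem:e:ridcond}: because of the memoryless (geometric/Laplace) tail of the increments, one has, for $b\ge 1$, $\mathbf P_{\beta,b}(S_1=0)=C\,\mathbf P_{\beta,b}(S_1\le 0)$ with $C$ independent of $b$, and this gives the \emph{exact} identity
$$\mathbf E_{\beta,\mu}\bigl(Z \mid \tau + G_{\tau-1}=L\bigr)=\mathbf E_{\beta,\mu}\bigl(Z \mid \tau + G_{\tau-1}=L,\,S_\tau=0\bigr)$$
for all functionals $Z$ of $(\lvert S_i\rvert)_{i<\tau}$. This is not an asymptotic negligibility statement: it removes the undershoot from the problem at no cost, leaving the conditioning $\{\tau+G_{\tau-1}=L,S_\tau=0\}$, which matches $\{A_{\tilde\tau}=L,S_{\tilde\tau}=0\}$ after symmetrization and accounting for the sticking time. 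Your replacement --- that the undershoot is $O(1)$, hence ``negligible at scale $L^{1/3}$'' --- is not by itself a valid argument at the LCLT level: the event $\{\tau+G_{\tau-1}=L\}$ has only polynomially small probability and the conditional law can in principle be sensitive to $O(1)$ shifts of a lattice target. Your proposed fix, summing ``exponentially weighted boundary data'' against shifted versions of Proposition~\ref{lltalgarea}, could in principle close this, but it needs precisely the discrete-gradient control of the type in~\eqref{eq:smoothnessstandartllt22} to make the sum converge to the right thing uniformly; you do not supply those estimates, and in effect you would be re-deriving by hand what the memorylessness gives for free.

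A second, smaller gap is in Step~1: for a trajectory with $S_0=0$ that sticks at $0$ for a while before rising, it is \emph{not} true that $S_1,\dots,S_{\tau-1}>0$, since $\tau$ (defined via~\eqref{deftauseq}) lets the walk sit at the origin. The paper handles this by decomposing $D_{1,N}$ over the sticking time $k$, geometrically weighted by $c_\beta^{-k}$, and appealing to dominated convergence. Finally, two small inaccuracies: short excursions $\{\tau\le aL^{2/3}\}$ under the area conditioning have probability $G(a)L^{-4/3}$ (Lemma~\ref{eq:asympataustau}), not exponentially small; and the continuity of $a_\cE$ does not need Theorem~\ref{thm:c} --- since $\cE>0$ on $(0,R_\cE)$, the area $t\mapsto A_t(\cE)$ is strictly increasing there, so its inverse is automatically continuous. (You appear to be conflating $\cE$ with $Y$; only $Y$ has a nontrivial zero set.)
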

There are three minor issues to settle to turn the proof of Theorem
\ref{thm:ab} into that of Theorem \ref{thm:b}. First, in Theorem
\ref{thm:b}  the law of $S_0$ is $\mu$ rather than $\delta_0$. Second,
by definition of $\tau$ the random walk in Theorem \ref{thm:b}  may
stick to the origin for a while before starting its excursion and this
is not the case in Theorem \ref{thm:ab}. Third, the conditionings are
different.
\iflong
Section 7 contains a detailed proof of Theorem \ref{thm:b} :
\else For conciseness, we refer to \cite[Section 7]{CarPet17bext} for more details about the proof of Theorem \ref{thm:b} but 
let us simply insist here on the fact that
\fi
the geometric nature of the increments (of $S$ sampled under ${\bf P}_{\mu,\beta}$) is the key to overstep these three difficulties. 
For instance, it yields that $(S_i)_{i=0}^{\tau-1}$ has the same law when it is  conditioned by $\ens{\tau + G_{\tau-1}=L}$ or by $\ens{\tau + G_{\tau-1}=L, S_{\tau}=0}$ (this last conditioning
being much easier to relate with that of Theorem \ref{thm:ab}, i.e.,  
 $\ens{A_{\tilde \tau}=L,S_{\tilde \tau}=0}$). 

\section{Sketch of the proof of Theorem \ref{thm:a}}\label{ps}

 Proving the first convergence in Theorem \ref{thm:a} requires to establish the tightness and the convergence of the finite dimensional marginals of ($\frac{1}{\sigma \sqrt{\tilde \tau}} S_{s \tilde \tau})_{s\in [0,1]} $ when $S$ is sampled under  $\bbP_0(\cdot\, |\, A_{\tilde \tau}=N, S_{\tilde \tau}=0)$. For every $d\in \N$ and $t=(t_1,\dots,t_d)$ (with $0\leq t_1<\dots<t_d\leq 1$), we define
 \be{defsnt}
 {\bf S}_{\lfloor Nt\rfloor}:= (S_{\lfloor Nt_1\rfloor}, \dots, S_{\lfloor Nt_d\rfloor}).
 \ee
 For the convergence,  we  proceed as follows:
 \smallskip

\begin{enumerate}
\item We state and prove \emph{Proposition \ref{thm:cvloiplusapproxregegallclt}} which generalizes a method introduced in \cite{MR1340834} by Davis and McDonald to derive local central limit theorems 
(LCLT) in a non-standard manner. To be more specific, we consider a sequence of random vectors $({\bf X}_n)_{n\geq 1}$ of law $\PP$ taking values in  a countable subset $\mathcal{X}$ of 
$\R^d$. This method can be implemented to estimate  $\PP({\bf X}_n=y)$ for $n$ large and $y \in \mathcal{X}$ but it  requires a good control on the spatial gradient of  $\PP({\bf X}_n=y)$ and also a convergence in distribution of 
$({\bf X}_n)_{n\in \N}$ towards a continuous random vector with a sufficiently smooth density.

\item For every $d\in \N$ and $t=(t_1,\dots,t_d)$ (with $0\leq t_1<\dots<t_d\leq 1$), we apply Proposition \ref{thm:cvloiplusapproxregegallclt} to prove  \emph{Proposition 
\ref{pro:lltbridge}}  which gives a LCLT for the joint distribution of $A_N$ and ${\bf S}_{[Nt]}$
with $S$ sampled from  $\bbP_0(\cdot\, |\, \tilde \tau=N,S_N=0)$.  Checking that the hypothesis of Proposition \ref{thm:cvloiplusapproxregegallclt} are satisfied requires, for every $\delta>0$, to 
bound from above the spatial gradient of quantities like
$$\bbP_0(A_N=a, S_N=y, \tilde \tau>N), $$
uniformly in $ y\geq \delta \sqrt{N}$ and in  $a\in \N$.
To that aim, we use a method introduced in \cite{DKW13}, which consists in relaxing in the conditioning the constraint that $S$  remains positive 
on some small windows on the left of time $N$. On such windows $S$ has no sign constraint anymore and therefore 
classical LCLT are available for the area, the position and their respective gradients. 

\item  We  \emph{rewrite the LCLT} in Proposition \ref{pro:lltbridge} but with the event  $A_N=a$ in the conditioning.  
Then, we turn this LCLT into a convergence of the finite dimensional marginals of  $(\frac{1}{\sigma \sqrt{\tilde \tau}} S_{s\tilde \tau})_{s\in [0,1]}$  by 
summing over the length of the excursion $N$, over the positions of $S$ at times $t_1N, \dots, t_d N$, and by performing a standard Riemann approximation.

\end{enumerate} 
\medskip

The  \emph{tightness} of ($\frac{1}{\sigma \sqrt{\tilde \tau}} S_{s\tilde \tau})_{s\in [0,1]}$ is stated in Lemma \ref{lem:tight} 
and proven in Section \ref{subsec:tightness}. By time reversibility and since we show that $(\frac{\tilde \tau}{L^{2/3}})_{L\in \N}$ is a tight sequence, it suffices 
to prove the tightness of the process indexed by $s\in [0,\frac{1}{2}]$, with $S$ sampled from  $\bbP_0(\cdot\, |\, \tilde \tau=N, A_N=L,S_N=0)$ and with $N\sim L^{2/3}$.
This last point is obtained with the help of a result from \cite{B76} which gives the convergence of $(\frac{1}{\sqrt{N}} S_{\lfloor sN\rfloor})_{s\in [0,1]}$ towards the Brownian meander 
when $S$ is sampled from $\bbP_0(\cdot\, |\, \tilde \tau>N)$.

%
%

\vspace{0cm}

\section{A generalization of Davis and McDonald's proof of LCLT}
\label{sec:macdonald}

Let us first write a multidimensional version of a result of
\cite{MR1340834}. Let $(e_1,\dots,e_d)$ be the canonical basis of $\R^d$ 
and let  $\nabla_i$ be the discrete gradient in direction $i$, that is
 $\nabla_i f(x) = f(x+e_i) -f(x)$ for $f:\Z^d\to \R$ and $x\in \Z^d$.
 

In the statement of Proposition \ref{thm:cvloiplusapproxregegallclt} below,  
$G_1,G_2$ and $G_3$ are generic functions 
that satisfy $\lim_{\eta \to 0^+} G_j(\eta)=0$ for $j\in \{1,2,3\}$. {\color{black} We also introduce a subset $A(\eta)$ of $\R^d$ and we refer to 
\eqref{defaeta} below for an example of such a subset in dimension $3$.}

%
 
 \begin{proposition}\label{thm:cvloiplusapproxregegallclt}
  Let $({\bf X}_n)_{n\in \N}$ be a sequence of random vectors in  $\Z^d$ of law $\mathbb{P}$. Given $(r_1, \ldots, r_d)
  \in(0,+\infty)^d$ let us  consider the scaling operator $\Lambda_n : \R^d \to
  \R^d$ defined by  $\Lambda_n(x_1, \ldots, x_d) = (\frac{x_1}{n^{r_1}},
  \ldots, \frac{x_d}{n^{r_d}})$. Assume that
 $\Lambda_n({\bf X}_n)$
converges in distribution to a random vector ${\bf Z}$ with a Lipschitz continuous density
  $\phi$. 

Assume that for any $\eta>0$ there exist a family of real numbers
$(p_\eta(x,n),x\in\Z^d, n\ge 1)$, a set $\cA(\eta)\subset \R^d$
  and constants $C_\eta>0,\, D_\eta>0$ such that, with $r:=r_1 +\cdots+r_d$,
  \smallskip
 \begin{enumerate}[(i)]
  \item $$
    \valabs{\nabla_i p_\eta(x,n)} \le \, \frac{C_\eta}{ n^{r+r_i}}
    \qquad\qquad \text{for}\  i\in\ens{1,\ldots,d}, x\in \Z^d, n\in \N.
$$ 
\item  $$ \sup_{x:\Lambda_n(x) \in \cA(\eta)} n^r \valabs{\probb{{\bf X}_n=x}
    -p_\eta(x,n)} \le G_1(\eta) \qquad  \text{for}\   n  \ \text{large enough}.$$
\item $$  \sup_{x:\Lambda_n(x)\not\in \cA(\eta)} n^r \probb{{\bf X}_n=x} \le G_2(\eta) \ \quad   \text{for}\  n\  \text{large enough}.$$
\item  $$ \sup_{x \not\in \cA(\eta)} \phi(x)\leq G_3(\eta) \ \text{ and for all} \ x\in \cA(\eta), \ \ens{y: \norme{y-x}<
    D_\eta} \subset \cA(\eta/2).$$

  \end{enumerate}
  \medskip
Then we have the LCLT
\begin{equation}
  \lim_{n\to +\infty} \sup_x \valabs{n^r\probb{{\bf X}_n = x} - \phi(\Lambda_n(x))} =0.
\end{equation}
\end{proposition}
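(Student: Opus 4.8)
The plan is to convert the two local hypotheses—a good bound on the discrete gradient of an approximating family $p_\eta$, and closeness of $p_\eta$ to the true probabilities on the bulk set $\cA(\eta)$—into a genuine pointwise LCLT by a telescoping/summation argument. The underlying mechanism is the classical observation (going back to Davis and McDonald) that if a quantity has small gradient, then it is close to its \emph{average} over a small box, and the average is controlled by the convergence in distribution. First I would fix $\eta>0$ and work on the region $x$ with $\Lambda_n(x)\in\cA(\eta)$. For such $x$, pick the box $Q_n(x)$ of side length $\sim \delta_n n^{r_i}$ in direction $i$ (with $\delta_n\to 0$ slowly, say $\delta_n = n^{-\epsilon}$ for small $\epsilon$), so that $\Lambda_n(Q_n(x))$ has diameter $<D_\eta$ and hence, by (iv), $Q_n(x)$ stays inside the region where $\Lambda_n(\cdot)\in\cA(\eta/2)$. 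By (i) and a one-step-at-a-time telescoping along the edges of the box, for any $y\in Q_n(x)$,
\[
\bigl| p_\eta(x,n) - p_\eta(y,n)\bigr| \le \sum_{i=1}^d (\delta_n n^{r_i})\cdot \frac{C_\eta}{n^{r+r_i}} = \frac{d\, C_\eta \delta_n}{n^{r}},
\]
so $p_\eta$ is essentially constant on the box at the scale $n^{-r}$. Averaging over $y\in Q_n(x)$ and using (ii) to replace $p_\eta(y,n)$ by $\probb{{\bf X}_n=y}$ up to $G_1(\eta/2)/n^r$, I get
\[
\Bigl| n^r p_\eta(x,n) - \frac{n^r}{|Q_n(x)|}\sum_{y\in Q_n(x)}\probb{{\bf X}_n=y}\Bigr| \le d C_\eta \delta_n + G_1(\eta/2).
\]

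The next step is to recognize the averaged sum as (a Riemann sum for) the density $\phi$. The set $\Lambda_n(Q_n(x))$ is a small box around $\Lambda_n(x)$ of Lebesgue volume $\prod_i \delta_n = \delta_n^d$ times $n^{-r}|Q_n(x)|/\prod_i (\delta_n n^{r_i})$... more cleanly: $n^r |Q_n(x)|^{-1} \sum_{y\in Q_n(x)}\probb{{\bf X}_n = y} = \bigl(\mathrm{vol}\,\Lambda_n(Q_n(x))\bigr)^{-1}\,\probb{\Lambda_n({\bf X}_n)\in \Lambda_n(Q_n(x))}$, and since $\Lambda_n({\bf X}_n)\Rightarrow {\bf Z}$ with Lipschitz density $\phi$, this converges (uniformly in $x$ over the relevant range, because the boxes are translates of a fixed shrinking box and $\phi$ is Lipschitz hence uniformly continuous) to $\phi(\Lambda_n(x))$ up to an error of order $\mathrm{Lip}(\phi)\cdot\delta_n + (\text{rate-of-convergence error})/\delta_n^d$. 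Choosing $\delta_n\to 0$ slowly enough that the weak-convergence error beats $\delta_n^d$ handles this. Combining the two displays, for $x$ with $\Lambda_n(x)\in\cA(\eta)$,
\[
\bigl| n^r \probb{{\bf X}_n=x} - \phi(\Lambda_n(x))\bigr| \le G_1(\eta) + d C_\eta\delta_n + G_1(\eta/2) + o_n(1),
\]
where I also used (ii) once more to pass from $n^r p_\eta(x,n)$ to $n^r\probb{{\bf X}_n=x}$; the $\eta$-dependent constant $C_\eta$ is fixed once $\eta$ is fixed, so $C_\eta\delta_n\to 0$ as $n\to\infty$.

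Finally I would dispose of the complementary region: for $x$ with $\Lambda_n(x)\notin\cA(\eta)$, hypothesis (iii) gives $n^r\probb{{\bf X}_n=x}\le G_2(\eta)$ directly, and hypothesis (iv) gives $\phi(\Lambda_n(x))\le G_3(\eta)$, so $|n^r\probb{{\bf X}_n=x}-\phi(\Lambda_n(x))|\le G_2(\eta)+G_3(\eta)$ there. Putting the two regions together,
\[
\limsup_{n\to\infty}\ \sup_x\ \bigl| n^r\probb{{\bf X}_n=x}-\phi(\Lambda_n(x))\bigr| \le 2G_1(\eta)+G_2(\eta)+G_3(\eta),
\]
and letting $\eta\to 0^+$, the right side vanishes by the assumption $\lim_{\eta\to 0^+}G_j(\eta)=0$. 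This proves the claim. The main obstacle is the bookkeeping in the Riemann-sum step: one must choose $\delta_n$ simultaneously slow enough that the box $\Lambda_n(Q_n(x))$ shrinks and the weak-convergence (Portmanteau) error $\probb{\Lambda_n({\bf X}_n)\in B} - \int_B\phi$ for the family of shrinking boxes $B$ is $o(\delta_n^d)$ uniformly in $x$, and fast enough that $\mathrm{Lip}(\phi)\delta_n\to 0$; making the uniformity in $x$ genuinely rigorous (the boxes are all translates of one shrinking reference box, so a single modulus-of-continuity / Portmanteau estimate suffices, but one should state this carefully) is the delicate point, together with verifying that $\delta_n\to 0$ slowly enough is compatible with $Q_n(x)\subset\Lambda_n^{-1}(\cA(\eta/2))$ which is where the second half of (iv) is used.
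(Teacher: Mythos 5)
Your argument is correct in its essentials, but it takes a genuinely different route from the paper. The paper proves the reduction to $\lim_n n^r\sup_x|\probb{{\bf X}_n=x}-h(x,n)|=0$, where $h(x,n):=\probb{{\bf Z}\in\Lambda_n(\Rect(x))}$, \emph{by contradiction}: it introduces $\ell(x,n)=n^r(\probb{{\bf X}_n=x}-h(x,n))$ and $\ell_\eta(x,n)=n^r(p_\eta(x,n)-h(x,n))$, fixes a radius $\alpha=\min(\epsilon/(4dC'_\eta),D_\eta/2)$ that does \emph{not} depend on $n$, and shows that a hypothetical point $x_n$ with $|\ell(x_n,n)|\ge\epsilon$ would, thanks to the gradient bound (i) and the Lipschitz bound on $\nabla h$, force $\ell(y,n)\ge\epsilon/8$ on a whole $\Lambda_n$-ball of radius $\alpha$; this contradicts $\beta_n(\alpha)\to 0$, which itself follows from uniform convergence of distribution functions at a \emph{fixed} scale. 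By keeping the ball radius fixed, the paper never needs a quantitative rate of convergence in distribution. Your averaging argument, by contrast, works directly rather than by contradiction, but because you average over a box of \emph{shrinking} $\Lambda_n$-diameter $\delta_n$, you need to control $\big[\probb{\Lambda_n({\bf X}_n)\in B_n}-\int_{B_n}\phi\big]/\delta_n^d$ uniformly over translates of a shrinking box, which is not a consequence of weak convergence alone but of a rate $\epsilon_n:=\sup_{B}|\probb{\Lambda_n({\bf X}_n)\in B}-\probb{{\bf Z}\in B}|\to 0$ (available here since ${\bf Z}$ has a continuous distribution function), combined with choosing $\delta_n$ so that both $\delta_n\to 0$ and $\epsilon_n/\delta_n^d\to 0$ while $\delta_n n^{r_i}\to\infty$; you correctly identify this as the delicate point but leave the choice of $\delta_n$ implicit, and you should also note that $\delta_n<D_\eta$ for $n$ large is what makes (iv) put your boxes inside $\cA(\eta/2)$. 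A minor sloppiness: your final bound reads $2G_1(\eta)$, but what you actually obtain is $G_1(\eta)+G_1(\eta/2)$, which is fine since it still tends to $0$ as $\eta\to 0^+$ — just don't silently assume $G_1$ is monotone. Net comparison: your direct averaging is more transparent and closer to the ``smooth function is close to its local average'' intuition, at the cost of having to pick $\delta_n$ adapted to an unquantified convergence rate; the paper's contradiction argument trades that for bookkeeping with $\ell,\ell_\eta,\beta_n$ but only ever uses weak convergence at a fixed scale.
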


%
%
%

%
\begin{remark}\label{rem1}
Since  $\phi$ is a Lipschitz continuous density, $\phi$ is necessarily bounded on $\R^d$. 
\end{remark}

\begin{proof}
{\color{black} For $x=(x_1,\dots,x_d)\in \R^d$ we let $\norme{x}:=\max\{|x_i|, i\leq d\}$ be its infinity norm}. We will denote by  $\norme{\phi}_{Lip}$ the  Lipschitz constant of $\phi$.
  For $x\in \Z^d$ and $n\in \N$  we set 
 $$\Rect(x):= \prod_{i=1}^d [x_i,x_{i}+1] \quad \text{and}\quad 
   h(x,n):= \probb{{\bf Z} \in \Lambda_n(\Rect(x))},$$ where we abuse notation and write  $\Lambda_n(\Rect(x))$ 
for the image of 
$\Rect(x)$ by $\Lambda_n$.
Using that $\phi $ is the density of ${\bf Z}$  we 
write, for $x\in \Z^d$ and $n\in \N$
\begin{align}\label{vci}
 \valabs{n^r h(x,n) - \phi(\Lambda_n(x))} &= \bigg | \int_{\Rect(x)}
    \phi(\Lambda_n(v) -\phi(\Lambda_n(x))\, dv\bigg | \\
\nonumber  & \le \norme{\phi}_{Lip}\int_{\Rect(x)} \norme{\Lambda_n(v)
    -\Lambda_n(x)}\, dv \le \norme{\phi}_{Lip} C \frac{1}{n^{g_r}},
\end{align}
{\color{black} where we set $g_r:=\inf\{r_i;\,  i\leq d\}.$} 
Therefore, the proof will be complete once we show that
\begin{equation} \label{eq:sufhxn}
\lim_{n\to +\infty} n^r \sup_{x\in \Z^d} \valabs{\probb{{\bf X}_n=x} - h(x,n)} =0.
\end{equation}
Assumption (i) combined with the fact that for $x\in \Z^d$, $n\in \N$  and $i\in \{1,\dots,d\}$,
\begin{equation}
  n^r \valabs{\nabla_i h(x,n)} = \bigg | \int_{\Rect(x)}
    \phi(\Lambda_n(v) - \phi(\Lambda_n(v+e_i))\, dv \bigg | \le
  \norme{\phi}_{Lip} \frac{1}{n^{r_i}},
\end{equation}
guarantees that $\ell_\eta(x,n) := n^r(p_\eta(x,n)-h(x,n))$ has
the smoothness
\begin{equation}\label{smoothelleta}
  \valabs{\nabla_i \ell_\eta(x,n)}  \le
  (\norme{\phi}_{Lip}+ C_\eta)\,  \frac{1}{n^{r_i}} =: C_\eta'\,  \frac{1}{n^{r_i}}.
\end{equation}
We let  $\ell(x,n) :=n^r( \probb{{\bf X}_n=x} - h(x,n))$. Observe that the convergence in distribution, combined with the
continuity of the density of ${\bf Z}$
 implies the uniform convergence of
distribution functions and therefore that for any 
 $\alpha>0$
\begin{equation}
 \lim_{n\to \infty}  \sup_{x\in \Z^d} \valabs{\probb{\norme{\Lambda_n({\bf X}_n)
        -\Lambda_n(x)} \le \alpha} -\probb{\norme{{\bf Z} -\Lambda_n(x)} \le \alpha}}=0.
\end{equation}
As a consequence, for any $\alpha>0$  we set
\be{defbe2}
 \beta_n(\alpha) :=\frac{1}{n^r} \sup_x \valabs{ \sum_{y : \norme{\Lambda_n(y)
      -\Lambda_n(x)} \le \alpha} \ell(y,n)},
\ee
and $\lim_{n\to \infty} \beta_n(\alpha)=0$.
%


At this stage, we assume that \eqref{eq:sufhxn} is not satisfied. Then, there exist
$\epsilon >0$ and a non-decreasing  sequence  $(n_k)_{k\in \N}$ satisfying $\lim_{k\to \infty} n_k= +\infty$
such that 
\begin{equation}
  \sup_x \valabs{\ell(x,n_k)} \ge \epsilon \quad \text{for every}\quad  k\ge 1.
\end{equation}
We pick $\eta=\eta(\gep)>0$ such that 
\be{defeta}
 2  G_3(\eta) + G_2(\eta) < \epsilon/2 \quad \text{and}\quad G_1(\eta) \le \epsilon/2 \quad \text{and}\quad  G_1(\eta/2) \le \epsilon/8.
\ee
Let   $n=n_k$ with $k$
large enough so that 
$\norme{\phi}_{Lip} n^{-g_r} \le G_3(\eta)$ and so that (ii) and (iii) are verified. Then, for all $x\in \Z^d$ such that $\Lambda_n(x)\not\in \cA(\eta)$ we use assumption (iv) to write

\begin{equation}
   n^r h(x,n) \le    n^r \int_{\Lambda_n(\text{Rect}(x))} |\phi(y)-\phi(\Lambda_n(x))| dy+ \phi(\Lambda_n(x)) \le 
  \frac{\norme{\phi}_{Lip}}{n^{g_r}} + G_3(\eta) \le 2 G_3(\eta),
\end{equation}
and thus with the help of (iii) and \eqref{defeta} we obtain  
\begin{equation}\label{eqell}
   \sup_{x: \Lambda_n(x)\not\in \cA(\eta)}  \valabs{\ell(x,n)}  \le 2
   G_3(\eta) + G_2(\eta)\leq \epsilon/2.
\end{equation}
By assumption, there exists 
   $x_n\in \Z^d$  such that  $\valabs{\ell(x_n,n)} \ge \epsilon$ and therefore \eqref{eqell} ensures that
   $\Lambda_n(x_n) \in \cA(\eta)$.
Assume that  $\ell(x_n,n)\ge \epsilon$ (the case $\leq-\gep$ is taken care of similarly). Since 
$\Lambda_n(x_n) \in \cA(\eta)$,  (ii) and \eqref{defeta} imply that 
\be{boundii}
\ell_\eta(x_n,n)\geq \epsilon/2.
\ee
At this stage we set
 $$ \alpha:=\min\big\{\tfrac{\epsilon}{4d C'_{\eta}}, \tfrac{D_\eta}{2}\big\},$$
  so that
 (iv) ensures us that  for any  $y\in \Z^d$ satisfying
$\norme{\Lambda_n(y) -\Lambda_n(x_n)} \le \alpha$, we have $\Lambda_n(y)
\in \cA(\eta/2)$ and thus by (ii) and \eqref{defeta}
\begin{equation}
  \valabs{\ell(y,n) -\ell_\eta(y,n)}=n^r \valabs{\probb{{\bf X}_n=y}-p_{\eta}(y,n)} \le G_1(\eta/2) < \epsilon/8.
\end{equation}
Thanks to $\ell_\eta$'s smoothness in \eqref{smoothelleta}, for any  $y$ such that
$\norme{\Lambda_n(y) -\Lambda_n(x_n)} \le \alpha$ we get
\begin{equation}\label{surlesvalabs}
  \valabs{\ell_\eta(y,n) -\ell_\eta(x_n,n)} \le C_\eta'\, d\,  \alpha \le \frac{\epsilon}{4},
\end{equation}
and thus (\ref{boundii}--\ref{surlesvalabs}) yield   $\ell(y,n) \ge \epsilon/8$. Coming back to \eqref{defbe2}, we bound $n^r \beta_n(\alpha)$ from below by 
\begin{equation}
  \sum_{y : \norme{\Lambda_n(y) -\Lambda_n(x_n)} \le \alpha} 
  \ell (y,n) \ge
  \frac{\epsilon}{8}\,\,   \big | \{y : \norme{\Lambda_n(y) -\Lambda_n(x_n)} \le \alpha\}\big |
  \ge \frac{\epsilon}{8}\, (2\alpha)^d  n^r,
\end{equation}
which contradicts the fact that $\lim_{n\to \infty}\beta_n(\alpha)=0$ and therefore 
proves \eqref{eq:sufhxn}.

\end{proof}

\section{Proof of Theorem \ref{thm:a}}
\label{sec:preuvea}

We derive a local central limit theorem (LCLT) for the excursion, that is we
condition now on the event $\ens{\tilde \tau=N,S_N=0}$.  Set $t=(t_1,\ldots,t_d)$
with $0< t_1 < \cdots < t_d < 1$. {\color{black} Remember the definition of $\snt$ in \eqref{defsnt}} and let $\phi_{t}(a,x)$ be the density of 
\be{defbe}
 \Big(A_1(e)= \int_0^1 e_s\, ds, e_{t_1},\cdots,e_{t_d}\Big),
 \ee
with $e$ the standard Brownian excursion. 
  \begin{proposition}\label{pro:lltbridge}  
 With $r= \frac{d}{2} + \frac32$, 
\begin{equation}
\lim_{N\to +\infty} \sup_{(a,x)\in \Z^{d+1}} \valabs{N^r \prob{A_N=a,
  \snt=x \mid \tilde \tau =N,S_N=0} 
-\frac{1}{\sigma^{r}} \phi_t\etp{\frac{a}{\sigma
    N^{3/2}}, \frac{x}{\sigma N^{1/2}}}}=0.
\end{equation}
\end{proposition}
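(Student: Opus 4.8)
The plan is to deduce Proposition~\ref{pro:lltbridge} from the general LCLT criterion of Proposition~\ref{thm:cvloiplusapproxregegallclt}, applied to the $(d+1)$-dimensional random vector
$\mathbf{X}_N:=(A_N,{\bf S}_{\lfloor Nt\rfloor})=(A_N,S_{\lfloor Nt_1\rfloor},\dots,S_{\lfloor Nt_d\rfloor})$ sampled under $\prob{\cdot\mid \tilde\tau=N,\,S_N=0}$, with scaling exponents $(\tfrac32,\tfrac12,\dots,\tfrac12)$ — so that the total exponent is exactly $r=\tfrac32+\tfrac d2$ — and scaling operator $\Lambda_N(a,x)=(\tfrac{a}{\sigma N^{3/2}},\tfrac{x}{\sigma N^{1/2}})$. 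The convergence-in-distribution hypothesis is provided by the functional CLT for the random walk excursion recalled in the introduction, in its bridge form ($\tilde\tau=N$, $S_N=0$, a minor variant of \cite{CarCha13,So} since at scale $\sqrt N$ pinning the endpoint at $0$ rather than at $\le 0$ does not change the limit): it forces $\Lambda_N(\mathbf{X}_N)$ to converge to $(A_1(e),e_{t_1},\dots,e_{t_d})$ (recall \eqref{defbe}), whose density is by definition $\phi_t$. One first checks that $\phi_t$ is Lipschitz and bounded (cf.\ Remark~\ref{rem1}); this rests on the explicit form of the joint law of a Brownian excursion at finitely many interior times together with its total area, and it is on the limit side that the extra smoothness coming from \eqref{defhyprw} (see the footnote and Proposition~\ref{lltalgarea}) is reflected.

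It then remains, for every $\eta>0$, to exhibit the approximating family $p_\eta(x,N)$, the set $\cA(\eta)\subset\R^{d+1}$, and to verify (i)--(iv). I would take $\cA(\eta)$ of the form $\{(a,x):x_i\in(\rho(\eta),\rho(\eta)^{-1})\text{ for all }i,\ a\in(\rho(\eta),\rho(\eta)^{-1})\}$ for a suitable $\rho(\eta)\downarrow 0$, so that (iv) — smallness of $\phi_t$ off $\cA(\eta)$ and the inclusion of a $D_\eta$-ball around $\cA(\eta)$ inside $\cA(\eta/2)$ — follows routinely from the continuity and the rapid decay of $\phi_t$ near the degenerate boundary. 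For the measure side I would follow \cite{DKW13}: one \emph{relaxes} in the conditioning the positivity constraint on a window of length $\floor{\eta N}$ just to the left of time $N$ (which lies to the right of $t_d$ for $N$ large, since $t_d<1$), and defines $p_\eta(x,N)$ from the resulting ``relaxed'' probability after decomposing $\prob{A_N=a,{\bf S}_{\lfloor Nt\rfloor}=x,\tilde\tau=N,S_N=0}$ into the pieces of the path along $0<t_1<\cdots<t_d<1$. On the relaxed window the walk carries no sign constraint, so classical LCLTs with derivative control apply and yield (i), namely $|\nabla_i p_\eta(x,N)|\le C_\eta N^{-(r+r_i)}$. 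For (ii)--(iii), the key reduction is to bounding the spatial gradient of $\prob{A_N=a,S_N=y,\tilde\tau>N}$ (and of the analogue carrying the intermediate pins ${\bf S}_{\lfloor Nt\rfloor}=x$) uniformly over $y\gtrsim\delta\sqrt N$ and over $a$: this is precisely what makes $p_\eta$ a genuine approximation, so that $N^r$ times the difference between the true and relaxed quantities tends to $0$ with $\eta$ on $\cA(\eta)$, while off $\cA(\eta)$ the rescaled probability is uniformly small because, by the Brownian meander functional CLT \cite{B76} and Brownian scaling, a conditioned excursion is unlikely to be near $0$ at an interior time or to carry an extreme area.

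Granting (i)--(iv), Proposition~\ref{thm:cvloiplusapproxregegallclt} delivers $\lim_N\sup_x|N^r\,\prob{\mathbf{X}_N=x}-\phi_t(\Lambda_N(x))|=0$, which, after tracking the powers of $\sigma$ hidden in $\Lambda_N$, is exactly the claimed estimate. The main obstacle is establishing (i): the uniform bound, with the correct powers of $N$, on the spatial gradient of the (now unconstrained on the last window) joint area-and-position functional. This is the genuinely technical input; it forces both the DKW window-relaxation device and a smooth LCLT for the algebraic area with derivative bounds (Proposition~\ref{lltalgarea}), and it is exactly here that the hypotheses $\bbE(X_1^4)<\infty$ and $\bbE(X_1^3)=0$ are used. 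A secondary point is to patch the positivity-constrained pieces on $[0,t_1],\dots,[t_{d-1},t_d]$ to the relaxed piece near time $N$ while keeping all estimates uniform in $(a,x)$.
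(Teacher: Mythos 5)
Your high-level strategy matches the paper's: apply Proposition~\ref{thm:cvloiplusapproxregegallclt} to the $(d{+}1)$-vector $(A_N,{\bf S}_{\floor{Nt}})$ under $\prob{\cdot\mid\tilde\tau=N,S_N=0}$ with scaling exponents $(\tfrac32,\tfrac12,\dots,\tfrac12)$, take $\cA(\eta)$ to be a set bounding the $x_i$ away from zero, and build $p_\eta$ by a DKW-type relaxation. But the way you set up the relaxation would not let you verify hypothesis (i), which you yourself identify as the crux.

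You propose relaxing the positivity constraint on a single window of length $\floor{\eta N}$ just to the left of time $N$. That is the $d=0$ device of \cite{DKW13} and it gives control of $\nabla_a$ and of the gradient in the terminal position, but not of $\nabla_{x_1},\dots,\nabla_{x_d}$: differentiating $p_\eta$ in $x_1$ still hits a factor like $\prob{A_{r_1}=a_1,S_{r_1}=x_1,\tilde\tau>r_1}$ whose positivity constraint runs all the way up to time $r_1=\floor{Nt_1}$, and there is no LCLT-with-gradient available for such a constrained quantity. What the paper does instead is decompose the path along the $d{+}1$ time segments and open a relaxation window of length $M=\floor{\eta^3 N}$ at \emph{each} internal junction time $r_i$ (and at time $N$), pairing the removed positivity constraint with an \emph{added} localization constraint $S_{r_i\mp M}\in Q_N(x_i)$ on a spatial window of width $\eta\sqrt N$; see \eqref{eq:defalphaeta}--\eqref{defpeta}. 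The gradient $\nabla_{x_i}$ then acts only on an unconstrained $M$-step block adjacent to time $r_i$, where \eqref{eq:smoothnessstandartllt22} applies and yields the $N^{-(r+r_i)}$ rate. Without windows at the interior pinned times this step fails, and without the accompanying spatial localization $Q_N(x_i)$ the approximation $p_\eta$ does not satisfy (ii). The window-length exponent also matters: with $M\sim\eta N$ the walk's displacement over the window, $\sqrt{\eta N}$, dominates the window width $\eta\sqrt N$ as $\eta\to 0$, so the $G_1(\eta)$ error would not vanish; the paper's choice $M\sim\eta^3 N$ reverses this comparison.

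Two smaller points. The extra smoothness from \eqref{defhyprw} is not ``reflected on the limit side'': the density $\phi_t$ of the Brownian excursion functional is smooth regardless. What \eqref{defhyprw} buys is the discrete gradient LCLT \eqref{eq:smoothnessstandartllt}--\eqref{eq:smoothnessstandartllt22} on the relaxed windows (you do say this correctly later, so this is a local slip). Also, your candidate $\cA(\eta)$ imposes two-sided bounds on the $x_i$ and on $a$; the paper's choice $\cA(\eta)=\{x_1\ge\eta,\dots,x_d\ge\eta\}$ with no constraint on $a$ is enough (and simpler) because $\phi_t$ vanishes as any $x_i\to 0$, which is all (iv) requires in combination with (iii).
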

The next step is to bring the condition on the area enclosed in the conditioning (i.e., the event $\{A_N=a\}$). It is the reason why we
derived a LCLT for the joint process (area at time $N$, position at times $tN$).  Recall \eqref{defEE}.


\begin{proposition}\label{pro:cvconjointetaurw}
Let $t=(t_1,\ldots,t_d)$
with $0< t_1 < \cdots < t_d < 1$. Then, as  $L\to \infty$ and when sampled under
$\prob{\cdot \mid A_{\tilde \tau} =L, S_{\tilde \tau}=0}$  
$$\displaystyle\etp{\frac{\tilde \tau}{ L^{2/3}},
  \frac{\mathbf{S}_{\floor{t\tilde \tau}}}{\sigma\sqrt{\tilde \tau}}}\quad \text{
converges in distribution to}\quad 
\displaystyle \etp{R_{\cE}, \unsur{\sqrt{R_{\cE}}}\cE(t_1 R_{\cE}),\dots, \unsur{\sqrt{R_{\cE}}}\cE(t_d R_{\cE})}.$$
  
\end{proposition}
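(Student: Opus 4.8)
The plan is to deduce Proposition~\ref{pro:cvconjointetaurw} from the local central limit theorem in Proposition~\ref{pro:lltbridge} by summing over the unknown excursion length $N$ and over the spatial positions, and by a Riemann-sum approximation. First I would rewrite Proposition~\ref{pro:lltbridge}, which conditions on $\{\tilde\tau=N,S_N=0\}$, as an unconditional statement about $\prob{A_N=a,\snt=x,\tilde\tau=N,S_N=0}$: by the known asymptotics $\prob{\tilde\tau=N,S_N=0}\sim c\,\sigma\, N^{-?}$ (the local limit theorem for the bridge excursion; the precise exponent is whatever makes the duration scale $N^{3/2}$ for the area, hence $\prob{\tilde\tau=N,S_N=0}$ decays like a constant times $N^{-3/2}$ up to the lattice span), one gets
\begin{equation*}
\prob{A_N=a,\snt=x,\tilde\tau=N,S_N=0}\ \approx\ \frac{c}{\sigma^{r}N^{r}}\,N^{-3/2}\,\phi_t\!\left(\frac{a}{\sigma N^{3/2}},\frac{x}{\sigma N^{1/2}}\right)
\end{equation*}
with $r=\tfrac d2+\tfrac32$, uniformly in $(a,x)$. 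Then, to put $\{A_{\tilde\tau}=L,S_{\tilde\tau}=0\}$ in the conditioning, I would write, for a bounded continuous test function $F$ of $(N/L^{2/3},x/(\sigma\sqrt N))$,
\begin{equation*}
\bE_0\!\left[F\ \middle|\ A_{\tilde\tau}=L,S_{\tilde\tau}=0\right]=\frac{\sum_{N\geq1}\sum_{x\in\Z^d}F\!\left(\tfrac{N}{L^{2/3}},\tfrac{x}{\sigma\sqrt N}\right)\prob{A_N=L,\snt=x,\tilde\tau=N,S_N=0}}{\sum_{N\geq1}\prob{A_N=L,\tilde\tau=N,S_N=0}},
\end{equation*}
substitute the LCLT asymptotics in numerator and denominator, change variables $N=uL^{2/3}$, $x=\sigma\sqrt N\,\theta$, and recognize both sums as Riemann sums converging to integrals against $\phi_t$. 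The resulting limit is, after simplification, $\int F(u,\theta)\,\tilde\phi_t(u,\theta)\,du\,d\theta$ where the limiting density is, up to normalization, $u^{-\#}\phi_t\!\big(u^{-3/2},\theta\big)$ evaluated appropriately; one then identifies this density with that of $\big(R_\cE,\tfrac1{\sqrt{R_\cE}}\cE(t_1R_\cE),\dots\big)$, which is precisely the content of the scaling identity between the area-normalized excursion $\cE$ and the length-normalized excursion $e$: namely $\cE_t\egaloi R_\cE^{-1/2}\,e_{t/R_\cE}\cdot(\text{time reparametrization})$, with $R_\cE$ the extension, so that conditioning $e$ to have area $L$ and rescaling is the same as running $\cE$ with $R_\cE=L^{2/3}\cdot(\text{const})$. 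I would make the scaling relation between $e$ and $\cE$ explicit (this is essentially the defining property in \eqref{defEE} together with Brownian scaling: if $e$ has area $\alpha$ then $\alpha^{-1/3}e_{\alpha^{-2/3}t}$ has unit area), and check that the change of variables transports $\phi_t$ to the claimed joint density.

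The two technical points needing care are: (a) justifying the interchange of the limit $L\to\infty$ with the infinite sums over $N$, i.e.\ establishing uniform integrability / tail control so that the contributions from $N$ very small (say $N\le \delta L^{2/3}$) and $N$ very large ($N\ge \delta^{-1}L^{2/3}$) are negligible uniformly in $L$; and (b) handling the regime where $N/L^{2/3}$ is small, where the LCLT with $a=L$ forces the rescaled area $L/(\sigma N^{3/2})=(\sigma)^{-1}(N/L^{2/3})^{-3/2}\to\infty$, so one needs decay of $\phi_t(a,\cdot)$ as $a\to\infty$ (a Gaussian-type tail for the area of the Brownian excursion) to control this tail — and similarly a lower-tail estimate on $\phi_t$ near $a=0$ for the large-$N$ regime. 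These tail bounds on $\phi_t$ (and the uniformity of the LCLT including for $N$ outside a fixed window, which may require supplementing Proposition~\ref{pro:lltbridge} with an a~priori bound such as $N^r\prob{A_N=a,\snt=x\mid\tilde\tau=N,S_N=0}\le C$ uniformly, valid for all $N$) are the main obstacle; everything else is bookkeeping plus the exact Brownian scaling identity.

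I would organize the write-up as follows: first, restate the needed local limit theorem for $\prob{\tilde\tau=N,S_N=0}$ and the combined version incorporating $\{A_N=a,\snt=x\}$; second, prove the two tail lemmas (small $N$ and large $N$) using the tail behaviour of $\phi_t$ and the a~priori LCLT bound, which lets one truncate the sums to $N\in[\delta L^{2/3},\delta^{-1}L^{2/3}]$ with error $o(1)$ as $\delta\to0$; third, on the truncated range, perform the change of variables and pass to the Riemann integral, obtaining convergence of $\bE_0[F\mid A_{\tilde\tau}=L,S_{\tilde\tau}=0]$ to an explicit integral; fourth, identify that integral as $\bE[F(R_\cE,R_\cE^{-1/2}\cE(t_1R_\cE),\dots,R_\cE^{-1/2}\cE(t_dR_\cE))]$ via the scaling relation between $e$ and $\cE$ from \eqref{defEE}. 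Since test functions $F$ of the displayed form separate the joint law, convergence in distribution of the vector $\big(\tilde\tau/L^{2/3},\snt/(\sigma\sqrt{\tilde\tau})\big)$ follows, which is the assertion.
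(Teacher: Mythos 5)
Your overall route is the paper's: uncondition Proposition~\ref{pro:lltbridge} using $\prob{\tilde\tau=N,S_N=0}\sim CN^{-3/2}$, sum over $N$ and $x$, pass to a Riemann integral, and identify the limiting density. The paper organizes this slightly differently: it tests the law on rectangles $\{\tilde\tau/L^{2/3}\in[a,b],\ \mathbf S_{\floor{t\tilde\tau}}/\sqrt{\tilde\tau}\in\cR\}$ with $[a,b]$ compact in $(0,\infty)$, so the numerator sum is automatically truncated to $N\in[aL^{2/3},bL^{2/3}]$ and \emph{all} tail control is pushed into the single asymptotic $v_L=\prob{A_{\tilde\tau}=L,S_{\tilde\tau}=0}\sim CL^{-4/3}$ (Lemma~\ref{eq:asympataustau}). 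That reorganization matters, because the method you propose for the small-$N$ tail would fail. An a priori bound of the form $N^r\prob{A_N=a,\snt=x\mid\tilde\tau=N,S_N=0}\le C$, equivalently $\prob{A_N=L,\tilde\tau=N,S_N=0}\le CN^{-3}$ uniformly in $L$, gives $\sum_{N\le\delta L^{2/3}}CN^{-3}=O(1)$, which is enormous compared with the required $o(L^{-4/3})$; and the error term in the LCLT is only $o(N^{-r})$ as $N\to\infty$ uniformly in $a$, so summing it over small $N$ again produces a constant, not $o(L^{-4/3})$. The decay of $\phi_t(a,\cdot)$ as $a\to\infty$ controls only the main term, not the error. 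The paper's proof of \eqref{prlqu} is a separate, genuinely probabilistic argument: it decomposes the event $\{\tilde\tau=N,A_N=L,S_N=0\}$ for $N\le aL^{2/3}$ according to the first and last times the walk exceeds $L/4N$, and uses the fourth-moment assumption, the LCLT of Proposition~\ref{lltalgarea}, and Doob's inequality to extract the factor $G(a)L^{-4/3}$. This is the main missing ingredient in your plan.

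A second, smaller gap is the identification step. The relation between the length-normalized excursion $e$ and the area-normalized excursion $\cE$ is not a deterministic Brownian rescaling of one sample path into the other: it is the absolute-continuity statement of Lemma~\ref{lem:norlennorarea}, namely that $Cu^{-3}\phi_t(u^{-3/2},x)$ is the density of $\big(R_\cE,R_\cE^{-1/2}\cE(t_iR_\cE)\big)$, and its proof uses the independence (disintegration) property of the It\^o excursion measure with respect to both normalizations, together with the exact tails $n_+(A\in da)=C a^{-4/3}da$ and $n_+(R\in dt)=Ct^{-3/2}dt$. Your heuristic ("rescale $e$ by its area") points in the right direction but is not a proof; you should either cite or reprove that lemma rather than treat the identification as a change of variables.
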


By establishing
tightness in Section \ref{subsec:tightness}, this convergence is lifted to the process level and 
proves the first convergence in Theorem \ref{thm:a}.
By applying an ad-hoc time change we finally obtain the second convergence in Theorem \ref{thm:a} in
Section \ref{sub:timechange}.

\subsection{Proof of Proposition~\ref{pro:lltbridge}}\label{subsec:a:lltexc}

We shall restrict ourselves to $d=2$ since all the technical
ingredients are present there. Therefore, we consider $t=(t_1,t_2)$ with 
$0< t_1< t_2 < 1$.  We are going to apply
Proposition~\ref{thm:cvloiplusapproxregegallclt}. For that purpose we define, for $N\in \N$,  the operator $\Lambda_N$ as 
\be{defl}
\Lambda_N(a,x)=(N^{-3/2}a,N^{-1/2}x) \quad \text{for} \quad a\in \R, x=(x_1,x_2)\in \R^2.
\ee
We recall \eqref{defsnt} and \eqref{defbe} and
we consider  $Y_N$ distributed as  $(A_N,\mathbf S_{\floor{tN}})$ under $\prob{. \mid
  \tilde \tau =N, S_N=0}$. A consequence of 
\cite[Corollary 2.5]{CarCha13} is that $\unsur{\sigma}\Lambda_N(Y_N)$ converges in
distribution towards $(A_1(e),e(t_1), e(t_2))$. The fact that
$\phi_t(a,x)$, the density of  $(A_1(e), e(t_1), e(t_2))$, is
Lipschitz continuous is the object of
\iflong
Lemma A.1.
\else
\cite[Lemma A.1]{CarPet17bext}.
\fi Without loss of generality we assume  $\sigma=1$.
\medskip

For every $\eta>0$ we set  
\be{defaeta}
\cA(\eta) := \ens{(a,x) \in \R^{3} \colon\, 
  x_1\ge \eta, x_2 \ge \eta}.
\ee   
For $(k,l)\in \N^2$ such that $k+1\leq l-1$ we denote by $\cW_{k,l}$ the set of path which remain positive 
between times $k$ and $l$, i.e., 
\be{defwkl}
\cW_{k,l}=\big\{S \in \Z^{\N_0} \colon\, S_i>0, \forall i\in \{k+1,\dots,l-1\} \big\}.
\ee
For   $N\in \N$ and $\eta>0$  we set 
$M_N:=\floor{\eta^3 N}$ and to simplify notations, we will omit the $N$ subscript in $M_N$. We set also for  $x\in \Z$
$$ Q_N(x) = \big\{z \in \Z : \valabs{x-z} \le \tfrac{1}{2} \eta \sqrt{N}\big\}.$$
We recall \eqref{defStiS} and also that $\PP_x$ the law of $(x+S_n)_{n\geq 0}$ for $x\in \Z$,. 
Then, for  $a\in \N$ and $x_1, x_2\in \N$ we set 
 \begin{align}\label{eq:defalphaeta}
   \alpha_\eta(a,0,x_2,N) &:=
  \PP_{0}\big(S\in \cW_{0,N-M_N},\,  A_N=a, \, S_N=x_2, \, S_{N-M}\in Q_N(x_2)\big),\\
\nonumber    \alpha_\eta(a,x_1,0,N) &:=
  \PP_{x_1}\big(S\in \cW_{M,N},\, A_N=a, \, S_N=0, \, S_{M}\in Q_N(x_1)\big), \\
\nonumber    \alpha_\eta(a,x_1,x_2,N) &:=
  \PP_{x_1} \big(S\in \cW_{M,N-M}, \, A_N=a,\, S_N=x_2,\,  S_{M}\in Q_N(x_1), \, S_{N-M}\in Q_N(x_2)\big).
\end{align}
\smallskip
For $a,k\in \N$  we introduce 
\be{defGa}
\mathfrak{G}_{k,a}:=\Big\{(a_i)_{i=1}^k\in \N^k\colon  \sum_{i=1}^k a_i=a\Big\} \quad \text{and} \quad  \mathfrak{G}^-_{k,a}:=\Big\{(a_i)_{i=1}^k\in \N^k\colon \sum_{i=1}^k a_i\leq a\Big\},
\ee 
and for $\eta>0$, $N\in \N$, $a\in \N$ and $x_1, x_2\in \N_{0}$ we set
 \begin{align}\label{defpeta}
  p_\eta(a,&x_1,x_2,t_1,t_2,N)\\
  \nonumber  &  = \unsur{\prob{\tilde \tau =N,S_N=0}} \sum_{(a_1,a_2,a_3)\in \mathfrak{G}_{3,a} }
  \alpha_\eta(a_1,0,x_1,r_1) 
\alpha_\eta(a_2,x_1,x_2,r_2) \, \alpha_\eta(a_3,x_2,0,r_3).
\end{align}
with $r_1=\floor{Nt_1}, r_2= \floor{Nt_2}-\floor{Nt_1}$ and $r_3=N-r_2$. 
\subsubsection{\bf Useful bounds}\label{ub}
In the present section we collect and prove some upper-bounds that will be important to verify assumptions (i--iv) of Proposition~\ref{thm:cvloiplusapproxregegallclt}.

Since the random walk $S$ is centered with
finite variance, the upper tail of $\tilde \tau$ can be bounded above, first for a random walk starting from the origin (see  \cite[Lemma 5.1.8]{LL10}) and also when the starting point 
of the random walk is positive but "small" (see e.g. \cite[(3.9)]{DKW13}). Thus, there exist $C_1>0$ and $\gep_1:(0,\infty)\to (0,\infty)$ satisfying $\lim_{\eta\to 0} \gep_1(\eta)=0$ 
such that for every $N\in \N$  
\begin{equation}\label{eq:simtau}
  \prob{\tilde \tau > N} \leq \frac{C_1}{ N^{1/2}} \quad \text{and}\quad  \sup_{0\le x \le \eta
    \sqrt{N}}\probx{\tilde \tau >N} 
  \le \gep_1(\eta).
\end{equation}
A classical Gaussian LCLT (see e.g. \cite[Theorem 2.3.10]{LL10}) and \cite[Proposition 2.3]{CD08}
ensure the existence of $C_2, C_3>0$  such that for every $N\in \N$ 
\begin{equation}\label{eq:simtau2}
 \sup_{x,y \in \Z} \probx{S_N=y} \le \frac{C_2}{N^{1/2}} \quad \text{and}\quad  \sup_{a,x,y\in \Z}\,  \probx{A_N=a, S_{N}=y} \le \frac{C_3}{N^{2}}.
\end{equation}
Combining the preceding inequalities, we obtain that there exist $C_4, C_5>0$ such that for every $N\in \N$,
\begin{equation}\label{eq:crudeboudsnxtau}
 \sup_{x\in \N}\,  \prob{S_N=x,\tilde \tau >N} \le \frac{C_4}{N} \quad \text{and}\quad  \sup_{a,x\in \N}\, \prob{A_N=a, S_N=x,\tilde \tau >N} \le \frac{C_5}{N^{5/2}}.
\end{equation}
We display here the  proof of the second  inequality in \eqref{eq:crudeboudsnxtau} since the proof of the first inequality is similar and easier. We apply Markov's property at time $K=\floor{\frac{N}{2}}$ and we recall \eqref{eq:simtau} and \eqref{eq:simtau2} to write
\begin{align}\label{airets}
  \nonumber  \PP_0 \big(A_N=a, \,  S_N=x,\,  &\tilde \tau >N\big) \\
\nonumber   &\le \sum_{z,a_1+a_2=a}
                                \prob{A_K=a_1,S_K=z, \tilde \tau > K}
    \probz{A_{N-K}=a_2,S_{N-K}=x}\\
 &\le  \frac{C_3}{(N-K)^{2}} \ \prob{\tilde \tau >K} \le \frac{C_5}{N^{5/2}}.
\end{align}

%
{\color{black} We can provide the counterparts bounds of} \eqref{eq:crudeboudsnxtau} when the starting point of the random walk is positive but "small". 
In other words, there exists $\gep_2:(0,\infty)\to (0,\infty)$ such that $\lim_{\eta\to 0}\gep_2(\eta)=0$ and such that   for every $N\in \N$
\begin{equation} \label{eq:tsnxpetit}
  \sup_{\genfrac{}{}{0pt}{}{ 0\le x \le \eta \sqrt{N}}{ y\in\N}} \probx{S_N=y, \tilde\tau >N} \le
\frac{\gep_2(\eta)}{ N^{1/2}}  \quad \text{and}\quad   \sup_{\genfrac{}{}{0pt}{}{0\le x \le \eta\sqrt{N}}{a,y\in \N}} \probx{A_N=a, S_N=y,\tilde \tau >N} \le \frac{\gep_2(\eta)}{ N^{2}}.
\end{equation}
Again, the proof of both inequalities above being very similar we only display the proof of the first inequality.
By Markov's property at time $K= \floor{\frac{N}2}$ and by recalling (\ref{eq:simtau}--\ref{eq:simtau2}) we get
\begin{align*}
  \probx{S_N=y, \tilde \tau >N} 
                        &\le \sum_{z\in \N} \probx{S_K=z, \tilde \tau >K} \probz{S_{N-K}=y} \\
                        &\le \frac{C_2}{ (N-K)^{1/2}}\,  \probx{\tilde \tau >K} \le \frac{C_2}{ (N-K)^{1/2}}\,  \gep_1(\sqrt{2}\eta)  \le \frac{ \gep_2(\eta)}{N^{1/2}}. 
\end{align*}

We also state the counterparts  inequalities of \eqref{eq:tsnxpetit} but for a  random walk starting from the origin and with a "small" endpoint, i.e., 
 there exists $\gep_3:(0,\infty)\to (0,\infty)$ satisfying 
$\lim_{\eta\to 0}\gep_3(\eta)=0$ and such that   for every $N\in \N$,
\begin{equation} \label{eq:e:lltboundxsmall}
  \sup_{ 0\le x \le \eta \sqrt{N}} \prob{S_N=x, \tilde\tau >N} \le
\frac{\gep_3(\eta)}{ N}  \quad \text{and}\quad   \sup_{\genfrac{}{}{0pt}{}{0\le x \le \eta\sqrt{N}}{a\in \N}} \prob{A_N=a, S_N=x,\tilde \tau >N} \le \frac{\gep_3(\eta)}{ N^{5/2}}.
\end{equation}
We only prove the first inequality in \eqref{eq:e:lltboundxsmall} since the proof of the second inequality is again completely similar.
By Markov's property at time $K= \floor{\frac{N}2}$ and with the help of \eqref{eq:crudeboudsnxtau} we get
\begin{align}\label{cii}
 \nonumber  \prob{S_N=x , \tilde  \tau >N } &= \sum_{z\in \N}
                            \prob{S_K=z, \tilde\tau>K}\, \probz{S_{N-K}=x, \tilde\tau
                            >N-K} \\
                          &\le \frac{C_4}{K}  \sum_{z\in \N}
                                        \probz{S_{N-K}=x, \tilde\tau >N-K}.                                                               
\end{align}
We now use  \emph{time reversal} and we will use it again several times in the present paper. For this  we consider the random walk $S':=(S'_n)_{n\in \N}$
with increments $X'_i=-X_i$ and we set $\tilde \tau'=\inf\ens{n\ge 1: S'_n \le
  0}$. We note that the increments of $S'$ also satisfy \eqref{defhyprw}. Since
\be{tr1}
 \probz{S_{N-K}=x, \tilde \tau > N-K} = \probx{S'_{N-K}=z,\tilde \tau'>N-K},
 \ee
we use \eqref{eq:simtau} with $\tilde \tau'$ and we rewrite \eqref{cii} as
\begin{align*}
  \prob{S_N=x , \tilde \tau >N }&\le \frac{C_4}{K} \sum_{z\in \N}  \probx{S'_{N-K}=z,\tilde \tau'>N-K}\\
&=
\frac{C_4}{K} \probx{\tilde \tau'>N-K} \le \frac{C_4 \, \gep_1(\sqrt{2}\eta)}{K}= \frac{\gep_3(\eta)}{N}.
\end{align*}

Using the extra moments assumption \eqref{defhyprw}, we obtain a standard Gaussian local limit
theorem for discrete gradients as in \cite[Theorem 2.3.6]{LL10}: there exists $C_6>0$ such that 
for every $N\in \N$,
\begin{align}\label{eq:smoothnessstandartllt}
\sup_{y\in \Z}\valabs{\nabla_y\prob{S_N=y} }& \le C_6 N^{-1}.
\end{align}
\iflong
Let us generalize this bound:
\else
We generalize this bound in \cite{CarPet17bext}:
\fi
there exist $C_7, C_8>0$ such that 
\begin{align}\label{eq:smoothnessstandartllt22}
\sup_{y,a\in \Z}\nonumber   \valabs{\nabla_y\prob{S_N=y,\, A_N=a} }& \le C_7 N^{-5/2}\, ,\\
\sup_{y,a\in \Z} \valabs{\nabla_a\prob{S_N=y,\, A_N=a} }& \le C_8 N^{-\text{7/2}} \,. 
\end{align}

\iflong
\begin{proof}
Let $f$ be the density of $(B_1,\int_0^1 B_s ds)$ with $B$ a standard
Brownian motion. Let
\begin{equation}
  p_n(k,a) = \prob{S_n=k,A_n=a}\,,\quad \bar{p}_n(k,a) = \unsur{n^2
    \sigma^2} f\etp{\frac{k}{ \sigma \sqrt{n}},\frac{a}{\sigma
      n^{3/2}}}\, \quad(k,a \in \Z)\,.
\end{equation}
Let $\nabla_k p_n(k,a)=p_n(k+1,a) -p_n(k,a)$ and $\nabla_k
\bar{p}_n(k,a)=\bar{p}_n(k+1,a) -\bar{p}_n(k,a)$. We proved in \cite[Equation
(7.11)]{CP15} that
\begin{equation}
  \sup_{k,a\in\Z} n^3 \valabs{p_n(k,a) - \bar{p}_n(k,a)} < +\infty.
\end{equation}
We are going to prove that
\begin{equation}\label{eq:sup_k-ainz-n72}
  \sup_{k,a\in\Z} n^{7/2} \valabs{\nabla_kp_n(k,a) - \nabla_k\bar{p}_n(k,a)} < +\infty.
\end{equation}
This easily implies the first assertion in
\eqref{eq:smoothnessstandartllt22}.

We established in  \cite[Equation
(7.23)]{CP15} that for a $\eta>0$ small enough there exists $\beta >0$
such that
\begin{equation}
  p_n(k,a) - \bar{p}_n(k,a) = O(e^{-\beta n}) + \unsur{4\pi^2 n^2}
  \int_{\ens{\valabs{\theta} \le \eta \sqrt{n}}} e^{-i
    z.\theta-\undemi \Gamma\theta.\theta}\etp{e^{g(n,\theta)}-1}\, d\theta\,,
\end{equation}
with $\Gamma$ the covariance matrix of the gaussian vector
$(B_1,\int_0^1 B_s\, ds)$ and $g$ a
function such that for a constant $C>0$, forall $\valabs{\theta} \le \eta \sqrt{n}$
: $\valabs{g(n,\theta)}\le
C\max\etp{\Gamma\theta.\theta,\valabs{\theta}^2/n}$. Hence we have
\begin{align*}
  \nabla_kp_n(k,a) - \nabla_k\bar{p}_n(k,a) &= O(e^{-\beta n}) \\
  & + \unsur{4\pi^2 n^2}
  \int_{\ens{\valabs{\theta} \le \eta \sqrt{n}}} e^{-i
    z.\theta-\undemi
    \Gamma\theta.\theta}\etp{e^{g(n,\theta)}-1}(e^{i\frac{\theta_1}{\sqrt{n}}} -1)\, d\theta\,.
\end{align*}
Therefore when we take modulus inside the integral we just multiply
the integrand by the factor
\begin{equation}
  \valabs{e^{i\frac{\theta_1}{\sqrt{n}}} -1}\le \frac{C}{\sqrt{n}}
\end{equation}
and this explains how we get the extra $n^{1/2}$ in
\eqref{eq:sup_k-ainz-n72}. It should be now clear that when we 
prove the second equation in \eqref{eq:smoothnessstandartllt22} we
shall get an extra $n^{3/2}$.
\end{proof}

\fi

\subsubsection{{\bf Proof of (i)}}
In the framework of Proposition~\ref{thm:cvloiplusapproxregegallclt}, we are working with $r=5/2$.
We recall \eqref{defpeta}. We need to prove the regularity of $p_\eta$ in $a$, $x_1$ and $x_2$. Thus, we must show that there exists $C_\eta>0$
such that for every $(a,x)\in \N\times\N^2$ and $N\in \N$ we have 
\begin{align}\label{eqnabla}
\big|\nabla_{a}p_\eta(a,x,t_1,t_2,N)\big |\leq \frac{C_\eta}{N^{4}}\, ,\quad 
\big|\nabla_{x_1}p_\eta(a,x,t_1,t_2,N)\big |\leq \frac{C_\eta}{N^3}\,, \quad\big|\nabla_{x_2}p_\eta(a,x,t_1,t_2,N)\big |\leq \frac{C_\eta}{N^{3}}.
\end{align}
\smallskip

As in \eqref{tr1} we consider the random walk  $S'$  and we let $p'_\eta$ be defined as in  (\ref{eq:defalphaeta}--\ref{defpeta}) but with 
 $S'$ instead of $S$. Then,  by \emph{time reversal} we have 
 \be{tr}
 p_\eta(a,x_1,x_2,t_1,t_2,N)=p'_\eta(a,x_2,x_1,1-t_2,1-t_1,N).
 \ee
 Since the increments of $S'$ also satisfy \eqref{defhyprw},  we conclude that regularity of $p_\eta$ in $x_1$ implies its regularity in $x_2$
 (i.e., the second inequality in \eqref{eqnabla} implies the third inequality).

Let us prove the second inequality in \eqref{eqnabla}. For simplicity we set $u_N:=\prob{\tilde \tau =N,S_N=0}$ and we use 
\cite[Theorem 6]{VatWac09}, or \cite[(4.5)]{CarCha13} to assert that
\begin{equation}\label{eq:thm6devatutinwachtel}
 u_N=C N^{-3/2} (1+o(1)).
\end{equation}
As a consequence, the second inequality in \eqref{eqnabla} will be proven once we show that 
$u_N \big|\nabla_{x_1}p_\eta(a,x,t_1,t_2,N)\big |\leq \frac{C_\eta}{N^{9/2}}$. We note that 
\begin{align}\label{eqrewr}
\nonumber u_N \nabla_{x_1} p_\eta(a,x,t_1,t_2,N)=\hspace{-.2cm} \sum_{(a_1,a_2,a_3)\in \mathfrak{G}_{3,a}}&
 \nabla_{x_1} \alpha_\eta(a_1,0,x_1,r_1)\,   \alpha_\eta(a_2,x_1+1,x_2,r_2) \, \alpha_\eta(a_3,x_2,0,r_3)\\
 &+\alpha_\eta(a_1,0,x_1,r_1)\,    \nabla_{x_1} \alpha_\eta(a_2,x_1,x_2,r_2) \, \alpha_\eta(a_3,x_2,0,r_3).
\end{align}
We will prove that there exist $c_{1,\eta}, c_{2,\eta}>0$ such that 
for every $a\in \N$, $(x_1,x_2)\in \N$ and $N\in \N$ we have 
\be{boundalp}
\big|\nabla_{x_1} \alpha_\eta(a,0,x_1,N)\big |\leq \frac{c_{1,\eta}}{N^3} \quad \text{and} \quad \big| \nabla_{x_1} \alpha_\eta(a,x_1,x_2,N)\big |\leq \frac{c_{2,\eta}}{N^{5/2}}.
\ee
Let us begin with the first inequality in \eqref{boundalp}. Recall that $M=\floor{\eta^3 N}$  and use Markov's property at time $N-M$ to write
\begin{align}\label{nabx1}
\nonumber \big|\nabla_{x_1} \alpha_\eta(a,0,x_1,N)\big | &\leq \sum_{ b=1}^{ a} \sum_{z\in Q_N(x_1)} \PP_0(A_{N-M}=b,S_{N-M}=z, \tilde \tau>N-M) \\
\nonumber & \hspace{5cm}  \times \big|\nabla_{x_1} \PP_{z}(S_M=x_1, A_M=a-b)\big |\\
\nonumber &\leq \frac{C_7}{M^{5/2}} \sum_{b=1}^{a} \sum_{z\in Q_N(x_1)} \PP_0(A_{N-M}=b,S_{N-M}=z, \tilde \tau>N-M) \\
 &\leq \frac{C_7}{M^{5/2}} \ \PP_0(\tilde \tau>N-M)\leq  \frac{C_7}{M^{5/2}}  \frac{C_1}{\sqrt{N-M}}\leq \frac{c_{1,\eta}}{N^3},
\end{align}
where we have used the first inequality in \eqref{eq:smoothnessstandartllt22} to write the second line and  \eqref{eq:simtau} in the third line.

For the second inequality in \eqref{boundalp}, by time reversal again we note that it suffices to prove the very same inequality with  $\nabla_{x_2} \alpha_\eta(a,x_1,x_2,N)$ instead of 
$\nabla_{x_1} \alpha_\eta(a,x_1,x_2,N)$ and this turns out to be easier. We apply Markov's property at time $N-M$ and obtain 
 \begin{align}\label{nabx11}
\nonumber \big|\nabla_{x_2} &\alpha_\eta(a,x_1,x_2,N)\big | \\
\nonumber &\leq \sum_{ b=1}^{ a} \sum_{z\in Q_N(x_2)} \PP_{x_1}(S_M\in Q_N(x_1), \, A_{N-M}=b,\, S_{N-M}=z, S\in \cW_{M,N-M}) \\
\nonumber & \hspace{7.5cm}\times\big|\nabla_{x_2} \PP_{z}(S_M=x_2, A_M=a-b)\big |\\
\nonumber &\leq \frac{C_7}{M^{5/2}} \,  \PP_{x_1}\big(S_M \in Q_N(x_1),  A_{N-M}\leq a, S_{N-M}\in Q_N(x_2), S\in \cW_{M,N-M} \big) \\
&\leq \frac{c_{2,\eta}}{N^{5/2}},
\end{align}
where we have used the first inequality in \eqref{eq:smoothnessstandartllt22} to write the second line.

We deduce from  \eqref{eq:defalphaeta} and from (\ref{eq:simtau}--\ref{eq:simtau2}) that there exists $c_{3,\eta}>0$ such that 
\begin{align}\label{thy1}
\nonumber \sum_{a_1=1}^{a} \alpha_\eta(a_1,0,x_1,r_1) &\leq \PP_0( \tilde \tau>r_1-M,\, S_{r_1-M}\in Q_N(x_1),\,  S_{r_1}=x_1)\\
\nonumber &\leq \sum_{z\in Q_N(x_1)} \PP_0( \tilde \tau>r_1-M,\, S_{r_1-M}=z)\,  \PP_z( S_{M}=x_1)\\
&\leq \frac{C_2}{\sqrt{M}}\   \PP_0( \tilde \tau>r_1-M)\leq \frac{C_1 C_2}{\sqrt{M} \sqrt{r_1-M}}\leq \frac{c_{3,\eta}}{N},
\end{align}
where we have used that $r_1=\lfloor t_1 N\rfloor$ (with $t_1>0$) and $M= \lfloor \eta^3 N\rfloor$ to write the last line.
Then, by \eqref{eq:defalphaeta} and \eqref{eq:simtau2} again we write that there exists $c_{4,\eta}>0$ such that 
\be{thy2}
\sum_{a_2=1}^a \alpha_\eta(a_2,x_1+1,x_2,r_2) \leq \PP_{x_1+1}(S_{r_2}=x_2)\leq \frac{C_2}{\sqrt{r_2}} \leq \frac{c_{4,\eta}}{\sqrt N}.
\ee
Finally, by using time reversibility and the same computation as \eqref{thy1} we obtain that there exists  $c_{5,\eta}>0$ such that 
\begin{align}\label{thy3}
 \sum_{a_3=1}^{a} \alpha_\eta(a_3,x_2,0,r_3) &\leq \PP_0( \tilde \tau>r_3-M, S'_{r_3-M}\in Q_N(x_2),\, S'_{r_3}=x_2)\leq \frac{c_{5,\eta}}{N}.
 \end{align} 
At this stage, it suffices to combine \eqref{eqrewr} with \eqref{boundalp} and (\ref{thy1}--\ref{thy3}) to complete the proof of the second inequality in \eqref{eqnabla}.

It remains to prove the first inequality in \eqref{eqnabla} that is the $a$-regularity of $p_\eta$. The proof is very close in spirit to that of the second inequality displayed above. For this reason we will only sketch the key points of the proof. 
 
 By \eqref{eq:thm6devatutinwachtel}  one should prove that
$u_N \big|\nabla_{a}p_\eta(a,x,t_1,t_2,N)\big |\leq \frac{C_\eta}{N^{11/2}}$. Moreover,
\begin{align}\label{eqrewr2}
\nonumber u_N \nabla_{a}p_\eta(a,x,t_1,t_2,N)= \sum_{(a_1,a_2,a_3)\in \mathfrak{G}_{3,a}}&
 \nabla_{a} \alpha_\eta(a_1,0,x_1,r_1)\,   \alpha_\eta(a_2,x_1,x_2,r_2) \, \alpha_\eta(a_3,x_2,0,r_3)\\
 \nonumber &+\alpha_\eta(a_1,0,x_1,r_1)\,    \nabla_{a} \alpha_\eta(a_2,x_1,x_2,r_2) \, \alpha_\eta(a_3,x_2,0,r_3)\\
 &+\alpha_\eta(a_1,0,x_1,r_1)\,    \alpha_\eta(a_2,x_1,x_2,r_2) \,  \nabla_{a} \alpha_\eta(a_3,x_2,0,r_3).
\end{align}
At this stage one should control the gradients, i.e.,  prove that  there exist $c_{6,\eta}, c_{7,\eta}, c_{8,\eta}>0$ such that 
for every $a\in \N$, $(x_1,x_2)\in \N$ and $N\in \N$ we have 
\begin{align}\label{boundalp2}
\big|\nabla_{a} \alpha_\eta(a,0,x_1,N)\big |\leq \frac{c_{6,\eta}}{N^4} \ \text{and} \  \big| \nabla_{a} \alpha_\eta(a,x_1,x_2,N)\big |\leq \frac{c_{7,\eta}}{N^{7/2}}
\  \text{and} \  \big| \nabla_{a} \alpha_\eta(a,x_2,0,N)\big |\leq \frac{c_{8,\eta}}{N^{4}}.
\end{align}
By time reversibility again,  in \eqref{boundalp2},  the first upper bound implies the third upper-bound. To prove the first upper bound
we follow the computation in \eqref{nabx1}  except that we replace  $\big|\nabla_{x_1} \PP_{z}(S_M=x_1, A_M=a-b)\big |$ by  $\big|\nabla_{a} \PP_{z}(S_M=x_1, A_M=a-b)\big |$ which we bound from above using the second inequality in \eqref{eq:smoothnessstandartllt22}. We conclude that indeed the first upper bound in \eqref{boundalp2} is satisfied. Similarly, we prove the second upper bound in \eqref{boundalp2} by following \eqref{nabx11} and  replacing $\big|\nabla_{x_2} \PP_{z}(S_M=x_2, A_M=a-b)\big |$ by $\big|\nabla_{a} \PP_{z}(S_M=x_2, A_M=a-b)\big |$ which we bound from above  with the second inequality  in \eqref{eq:smoothnessstandartllt22}.

At this stage we complete the proof of first inequality in \eqref{eqnabla} by combining  \eqref{eqrewr2} with \eqref{boundalp2} and (\ref{thy1}--\ref{thy3}).

\subsubsection{{\bf Proof of (ii)}}

Recall \eqref{defaeta}. Using \eqref{eq:thm6devatutinwachtel}, we need to establish that there exists $G_1:(0,\infty)\to (0,\infty)$ such that 
$\lim_{\eta \to 0} G_1(\eta)=0$ and such that for every  $\eta>0$ we have  for $N$ large enough and for  $a\in \N$ and $x_1, x_2\geq \eta \sqrt{N}$ that
\begin{multline}\label{defG} 
  \big|\prob{A_N=a,S_{\floor{Nt_1}}=x_1,S_{\floor{Nt_2}}=x_2,\tilde \tau
     =N,S_N=0}\\
   - \sum_{(a_1,a_2,a_3)\in \mathfrak{G}_{3,a}}
  \alpha_\eta(a_1,0,x_1,r_1)\,   \alpha_\eta(a_2,x_1, x_2,r_2)\,  \alpha_\eta(a_3,x_2, 0,r_3)\,\big|\le \frac{G_1(\eta)}{N^{4}}.
\end{multline}
By  Markov's property, we have
 \begin{align}\label{eq:markovdecompositionexcursion}
  \mathbb{P}_0\big(A_N=a,S_{\floor{Nt_1}}=x_1,&S_{\floor{Nt_2}}=x_2,\tilde \tau =N,S_N=0\big)
  \\ 
 \nonumber  =\sum_{(a_1,a_2,a_3)\in \mathfrak{G}_{3,a}} & \prob{A_{r_1}=a_1,S_{r_1}=x_1,\tilde \tau > r_1}
 \\
\nonumber   &\times  \PP_{x_1}\etp{A_{r_2}=a_2, S_{r_2}= x_2,\tilde \tau > r_2}  \PP_{x_2}\etp{A_{r_3}=a_3,\tilde \tau=r_3,S_{r_3}=0}.
 \end{align}
At this stage we need to prove that there exists  $\bar G_1:(0,\infty)\to (0,\infty)$ satisfying
$\lim_{\eta \to 0} \bar G_1(\eta)=0$ and  such that for every  $\eta>0$ we have  for $N$ large enough and for  $a\in \N$ and $x_1, x_2\geq \eta \sqrt{N}$ that 
\begin{align}\label{eq:diffe}
 \big |\prob{A_{N}=a,S_{N}=x_1,\tilde \tau > N}- \alpha_\eta(a,0,x_1,N)\big |& \leq  \frac{\bar G_1(\eta)}{N^{5/2}},\\
\nonumber  \big | \PP_{x_1}\etp{A_{N}=a, S_{N}= x_2,\tilde \tau > N}  - \alpha_\eta(a,x_1, x_2,N)\big | & \leq \frac{\bar G_1(\eta)}{N^{2}}, \\
\nonumber  \big | \PP_{x_2}\etp{A_{N}=a,\tilde \tau=N,S_{N}=0}- \alpha_\eta(a,x_2, 0,N) \big |& \leq \frac{\bar G_1(\eta)}{N^{5/2}}.
\end{align}
By time reversibility, in \eqref{eq:diffe}, the first inequality  implies the third inequality.
  Let us prove the first inequality in \eqref{eq:diffe}. We  write  
  \begin{align}\label{boundArr}
    &\prob{A_{N}=a,S_{N}=x_1,\tilde \tau > N}-\alpha_\eta(a,0,x_1,N) =  \ell_{1,\eta}(a,0,x_1,N) -\ell_{2,\eta}(a,0,x_1,N),
     \end{align}
 with     
   \begin{align}\label{boundArr2}
   \ell_{1,\eta}(a,0,x_1,N)&= \prob{A_N=a,S_{N-M}\notin Q_N(x_1),S_N = x_1,\tilde \tau > N}, \\
\nonumber   \ell_{2,\eta}(a,0,x_1,N)&= \prob{A_N=a, S_{N-M}\in Q_N(x_1),S_N = x_1, N-M < \tilde \tau \le N} .
    \end{align}
     
We consider first
  \begin{align}\label{breta}
 &\ell_{1,\eta}(a,0,x_1,N)\\
\nonumber   &=\hspace{-.4cm}  \sum_{z\notin Q_N(x_1), (a_1,a_2)\in \mathfrak{G}_{2,a}}
              \hspace{-.2cm}       \prob{A_{N-M}=a_1,S_{N-M}=z,\tilde \tau > N-M}\ 
                     \probz{A_M=a_2,S_M=x_1,\tilde \tau >M}.
    \end{align}
With the help of \eqref{eq:crudeboudsnxtau}, we bound the first probability  in the right hand side (r.h.s.) of \eqref{breta} by 
$C_5/(N-M)^{5/2}$. Then, we remove the condition $\tilde \tau>N-M$ from the second probability  and  we sum it on $a_2\leq a$ 
to obtain                       
\be{reta2}
 \ell_{1,\eta}(a,0,x_1,N)  \le C_5 (N-M)^{-5/2}\sum_{z\notin Q_N(x_1)} \probz{S_M=x_1}.
 \ee
We let $(B_s)_{s\geq 0}$ be a standard 1-dimensional Brownian motion.  Using time reversibility we get that 
$\sum_{z\notin Q_N(x_1)} \probz{S_M=x_1}
=  \PP_0\big(\big |S'_{\lfloor \eta^3 N\rfloor}\big | \ge \undemi \eta
    \sqrt{N} \big)$
{\color{black} and therefore, \eqref{reta2} and a standard LCLT (see e.g. \cite{LL10})} guarantee that there exists $C>0$ such that  for $N$ large enough 
\be{bfreta1}
 \ell_{1,\eta}(a,0,x_1,N) \leq \frac{C}{N^{5/2}}\  \PP_0\Big(|B_1|\geq \tfrac{1}{2\sqrt \eta}\Big).
 \ee
We proceed similarly to bound $\ell_{2,\eta}(a,0,x_1,N)$ from above.  Thus, we apply Markov's property at time 
$N-M$  and write
 \begin{align}\label{r2}
  &\ell_{2,\eta}(a,0,x_1,N)\\
\nonumber  &= \hspace{-.4cm}   \sum_{z\in Q_N(x_1), (a_1,a_2)\in \mathfrak{G}_{2,a}}
                \hspace{-.3cm}      \prob{A_{N-M}=a_1,S_{N-M}=z,\tilde \tau > N-M}
                   \   \probz{A_M=a_2,S_M=x_1,\tilde \tau \le M } .
  \end{align}
By using again \eqref{eq:crudeboudsnxtau}, summing over $a_2\leq a$,  applying time reversibility and summing over $z \in Q_N(x_1)$  we get that there exists a $C>0$ 
such that 
 \begin{align}\label{r22}
\nonumber   &\ell_{2,\eta}(a,0,x_1,N)\leq  \frac{C_5}{ (N-M)^{5/2}} \  \PP_{x_1}\big(S'_M\in Q_N(x_1),\tilde \tau' \le M\big)\le  \frac{C}{ N^{5/2}}\   \PP_0\Big(\max_{k\le \eta^3 N} \valabs{S'_k} \ge \eta
      \sqrt{N}\Big),
  \end{align}  
 where we have used the condition $x_1\ge \eta \sqrt{N}$ and the equality $M=\lfloor \eta^3 N\rfloor$ to obtain the last inequality. By Donsker invariance principle 
 we get finally that  that for $N$ large enough
 \be{bfreta2}
 \ell_{2,\eta}(a,0,x_1,N) \leq \frac{C}{N^{5/2}}\  \PP_0\Big(\max_{s\in [0,1] } |B_s|\geq \tfrac{1}{\sqrt \eta}\Big).
 \ee
 At this stage, \eqref{boundArr} combined with \eqref{boundArr2}, \eqref{bfreta1} and \eqref{bfreta2} complete the proof of the first inequality in \eqref{eq:diffe}.
  
We only give the main lines of the proof of the second inequality in \eqref{eq:diffe}, i.e., 
  \begin{align}\label{boundAr}
    & \PP_{x_1}\etp{A_{N}=a, S_{N}= x_2,\tilde \tau > N}  - \alpha_\eta(a,x_1, x_2,N) \\
  \nonumber   &\quad\quad\quad\quad \quad\quad=  \ell_{1,\eta}(a,x_1,x_2,N) -\ell_{2,\eta}(a,x_1,x_2,N)+ \ell_{3,\eta}(a,x_1,x_2,N) -\ell_{4,\eta}(a,x_1,x_2,N),
     \end{align}
 with     
 \begin{align}\label{boundAr2}
 \ell_{1,\eta}(a,x_1,x_2,N)&= \PP_{x_1}\big(A_N=a,S_{N-M}\notin Q_N(x_2),S_N = x_2,\tilde \tau > N\big) \\
\nonumber    \ell_{2,\eta}(a,x_1,x_2,N)&= \PP_{x_1}\big(A_N=a, S_{N-M}\in Q_N(x_2),S_N = x_2, N-M < \tilde \tau \le N\big )  \\
\nonumber   \ell_{3,\eta}(a,x_1,x_2,N)&= \PP_{x_1}\big( A_N=a, S_M\notin Q_N(x_1), S_{N-M}\in Q_N(x_2),S_N = x_2, \tilde \tau>N-M \big )\\
\nonumber   \ell_{4,\eta}(a,x_1,x_2,N)&\\
\nonumber  = \PP_{x_1}\big(&A_N=a, S_M \in Q_N(x_1), \tilde \tau \leq M,  S_{N-M}\in Q_N(x_2),S_N = x_2, S\in W_{M,N-M}\big).
\end{align}
Here as well we  need to identify $\bar G_2(\eta)$ satisfying $\lim_{\eta\to 0} \bar G_2(\eta)=0$ and such that for every $\eta>0$ we have  for $N$ large enough  and for $a\in \N$ and $x_1, x_2\geq \eta \sqrt{N}$ that  $r_{i,\eta}(a,x_1,x_2,N) \leq \bar G_2(\eta)/N^2$ (for  $1\leq i\leq 4$). To that aim we repeat the same type of computations as those who led 
to \eqref{bfreta1} and \eqref{bfreta2}. To be more precise, we apply Markov's property at time $N-M$ for $\ell_{1,\eta}$ and $\ell_{2,\eta}$
and at time $M$ for   $\ell_{3,\eta}$ and $\ell_{4,\eta}$. Since we consider random walk trajectories starting from $x_1$ (instead of starting from the origin in the previous proof) we apply \eqref{eq:simtau2} instead of \eqref{eq:crudeboudsnxtau} and this explains why the r.h.s. in  the second inequality in \eqref{eq:diffe} decays as $N^{-2}$ instead of $N^{-5/2}$. 
To obtain $\bar G_2(\eta)$ we apply Donsker invariance principle on the time interval $\{N-M,\dots,N\}$ for $\ell_{1,\eta}$ and $\ell_{2,\eta}$ and on the time interval $\{0,\dots,M\}$ for 
 $\ell_{3,\eta}$ and $\ell_{4,\eta}$. This completes the proof of the second inequality in \eqref{eq:diffe}.
 
 At this stage we recall that $r_1=\lfloor t_1 N\rfloor$ and we use   \eqref{eq:crudeboudsnxtau} to claim that there exists a $C>0$ such that 
\begin{align}\label{thysec1}
\sum_{a_1=1}^ a  \prob{A_{r_1}=a_1,S_{r_1}=x_1,\tilde \tau > r_1}&\leq \PP_0(S_{r_1}=x_1, \tilde \tau>r_1)\leq \frac{C}{N},
\end{align}
then we use that $r_2=\lfloor t_2 N\rfloor-\lfloor t_1 N\rfloor$ and  \eqref{eq:simtau2} to claim that there exists a $C>0$ such that 
\be{thysec2}
\sum_{a_2=1}^{a}  \PP_{x_1}\etp{A_{r_2}=a_2, S_{r_2}= x_2,\tilde \tau > r_2}  \leq \PP_{x_1}(S_{r_2}=x_2)\leq \frac{C}{\sqrt N},
\ee
and finally we use time reversibility and  \eqref{eq:crudeboudsnxtau} to claim that there exists a $C>0$ such that 
\begin{align}\label{thysec3}
 \sum_{a_3=1}^a \PP_{x_2}\etp{A_{r_3}=a_3,\tilde \tau=r_3,S_{r_3}=0} &\leq \PP_0( \tilde \tau>r_3, S'_{r_3}=x_2)\leq \frac{C}{N}.
 \end{align} 
At this stage, it suffices to combine \eqref{eq:markovdecompositionexcursion} and  \eqref{eq:diffe} with (\ref{thy1}--\ref{thy3}) and (\ref{thysec1}--\ref{thysec3}) to complete the proof of  
\eqref{defG}.

\subsubsection{\bf Proof of (iii)}

We recall \eqref{eq:thm6devatutinwachtel}. We need to establish that there exists $G_2:(0,\infty)\to (0,\infty)$ satisfying
$\lim_{\eta \to 0} G_2(\eta)=0$ and such that for every  $\eta>0$ we have  for $N$ large enough and for  $a\in \N$ and  $0\le x_1 \le
\eta \sqrt{N}$ or $0\le x_2 \le \eta \sqrt{N}$ that
\begin{equation}\label{iaa}
   \cG_{N}(a,x_1,x_2):= \prob{A_N=a,S_{\floor{Nt_1}}=x_1,S_{\floor{Nt_2}}=x_2,\tilde \tau =N,S_N=0} \le \frac{G_2(\eta)}{N^{4}}.
\end{equation}
 We assume that  $0\le x_1\le \eta \sqrt{N}$,  and we use \eqref{eq:tsnxpetit} to bound the second probability in the r.h.s. of  \eqref{eq:markovdecompositionexcursion}. 
 Therefore
  \begin{align}\label{dp3i}
  \nonumber  \cG_{N}(a,x_1,x_2)&\le  \tfrac{\gep_2(\eta)}{ r_2^{2}} \sum_{(a_1,a_3)\in \mathfrak{G}_{2,a}^-} \prob{A_{r_1}=a_1,S_{r_1}=x_1,\tilde \tau > r_1}
  \PP_{x_2}\etp{A_{r_3}=a_3,\tilde \tau=r_3,S_{r_3}=0}
  \\
  &\le \tfrac{\gep_2(\eta)}{r_2^{2}}\  \prob{S_{r_1}=x_1,\tilde \tau > r_1}
\   \PP_{x_2}\etp{\tilde \tau=r_3,S_{r_3}=0}.
  \end{align}
  With the help of  \eqref{eq:crudeboudsnxtau} we bound the first probability in the r.h.s. in \eqref{dp3i} and 
 we use time reversibility and  \eqref{eq:crudeboudsnxtau} again to bound the second probability so that \eqref{dp3i} becomes 
  \begin{align}\label{dp3ii}
   \nonumber  \cG_{N}(a,x_1,x_2)
   & \le \frac{\gep_2(\eta) C_4^2}{ r_2^{2} r_1
 r_3} \le \frac{G_2(\eta)}{ N^{4}},
\end{align}
and \eqref{iaa} is proven.
It remains to consider the case $0\le x_2\le \eta \sqrt{N}$ but by time reversibility 
this is exactly the same proof.
%

\subsubsection{\bf Proof of (iv)}

{\color{black} We recall the definition of $\mathcal{A}(\eta)$ in
  \eqref{defaeta}.} Since $\phi_t$ is Lipschitz continuous (see
\iflong Lemma A.1 \else \cite[Lemma A.1]{CarPet17bext}\fi) and since $\phi_t(a,x_1,\dots,x_d)=0$ if there exists $i\leq d$ such that  $x_i=0$  we obtain that
$$ \prob{(A_1(e),e_{t_1},\dots,e_{t_d}) \notin \cA(\eta)} \le \sum_{i=1}^{d} \prob{e_{t_i} \le \eta} = o(\eta),$$
so that (iv) of Proposition~\ref{thm:cvloiplusapproxregegallclt} is easily proven.

\subsection{Proof of Proposition~\ref{pro:cvconjointetaurw}}
In this proof again we assume, without loss of generality that $\sigma=1$. We shall prove at the end of this section the following
\begin{lemma}\label{eq:asympataustau}
There exists $C>0$ such that 
  \begin{equation*}
    v_L := \prob{A_{\tilde \tau}=L, S_{\tilde \tau}=0}= C L^{-4/3} (1+o(1)).
    \end{equation*}
    
  \end{lemma}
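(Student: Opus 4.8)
The plan is to compute the asymptotics of $v_L=\prob{A_{\tilde\tau}=L,S_{\tilde\tau}=0}$ by summing the joint local limit theorem over the length $N$ of the excursion. First I would write
\[
v_L=\sum_{N\ge 1}\prob{A_N=L,\tilde\tau=N,S_N=0}=\sum_{N\ge 1}u_N\,\prob{A_N=L\mid \tilde\tau=N,S_N=0},
\]
where $u_N=\prob{\tilde\tau=N,S_N=0}$. By \eqref{eq:thm6devatutinwachtel} we have $u_N=CN^{-3/2}(1+o(1))$, and by Proposition~\ref{pro:lltbridge} specialised to $d=0$ (or its one-dimensional analogue, which is exactly \eqref{DKW} of \cite{DKW13} rephrased), $\prob{A_N=a\mid\tilde\tau=N,S_N=0}=\sigma^{-1}N^{-3/2}w_e(a/(\sigma N^{3/2}))+o(N^{-3/2})$ uniformly in $a$, where $w_e$ is the density of $A_1(e)=\int_0^1 e_s\,ds$. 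Substituting $a=L$, the natural change of variables is $N=\sigma^{-2/3}L^{2/3}s$, i.e. the term for a given $N$ contributes like $C\sigma^{-1}N^{-3}w_e(L/(\sigma N^{3/2}))$, and the sum over $N$ becomes, after a Riemann approximation, a constant times $L^{-4/3}\int_0^\infty (\text{something})\,ds$, producing the claimed $L^{-4/3}$ scaling.

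More precisely, I would set $N=\lfloor \sigma^{-2/3}L^{2/3}s\rfloor$ so that $L/(\sigma N^{3/2})\to s^{-3/2}$, and rewrite
\[
v_L=\sum_{N\ge 1}\Big(CN^{-3/2}+o(N^{-3/2})\Big)\Big(\tfrac{1}{\sigma}N^{-3/2}w_e\big(\tfrac{L}{\sigma N^{3/2}}\big)+o(N^{-3/2})\Big).
\]
Pulling out $L^{-4/3}$ via $N^{-3}=\sigma^2 L^{-2}(N/(\sigma^{-2/3}L^{2/3}))^{-3}$ and recognising the sum as a Riemann sum with mesh $\sim \sigma^{-2/3}L^{2/3}$ over the variable $s$, I would obtain
\[
v_L= \frac{C}{\sigma}\,L^{-4/3}\Big(\int_0^\infty s^{-3}\,w_e(s^{-3/2})\,\sigma^{2/3}\,ds+o(1)\Big),
\]
and a further substitution $u=s^{-3/2}$ turns the integral into $\tfrac23\sigma^{2/3}\int_0^\infty u\,w_e(u)\,du=\tfrac23\sigma^{2/3}\,\bbE[A_1(e)]$, which is a finite positive constant since the Brownian excursion area has finite mean. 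This identifies the limiting constant (and, incidentally, shows it can be written in terms of $\int_0^\infty u\,w_e(u)\,du$).

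The main obstacle is making the interchange of limits and the Riemann approximation rigorous, uniformly in $L$: the error terms $o(N^{-3/2})$ from Proposition~\ref{pro:lltbridge} are uniform in $a$ for each fixed $N$ but I need summability in $N$, so I would split the sum into a central range $N\in[\epsilon L^{2/3},\epsilon^{-1}L^{2/3}]$, where the LCLT approximations apply with controlled error and the Riemann sum converges to the integral, a tail range $N\gg L^{2/3}$ controlled using the crude bound $\prob{A_N=L,S_N=0,\tilde\tau=N}\le \prob{A_N=L,S_N=0,\tilde\tau>N-1}\le C_5 N^{-5/2}$ from \eqref{eq:crudeboudsnxtau} (giving a contribution $O((\epsilon^{-1}L^{2/3})^{-3/2})=O(\epsilon^{3/2}L^{-1})$, negligible against $L^{-4/3}$... one must be careful here and instead use a bound capturing the area constraint, e.g. that $w_e$ decays fast so $\prob{A_N=L\mid\cdots}\le C N^{-3/2}\sup_{v\ge \epsilon^{-3/2}}w_e(v)$ is small), and a small-$N$ range $N\le \epsilon L^{2/3}$ where $L/N^{3/2}\ge \epsilon^{-3/2}$ is large and one uses the Gaussian-type decay of $w_e$ near infinity (indeed $w_e$ has super-exponential tails, see \cite{DKW13}) to bound the contribution by $o(L^{-4/3})$ as $\epsilon\to0$. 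Assembling the three pieces and letting $\epsilon\to 0$ after $L\to\infty$ yields $v_L=C'L^{-4/3}(1+o(1))$ with $C'>0$; I would only need to check that the limiting constant is genuinely positive, which follows from positivity of $w_e$ and finiteness of $\bbE[A_1(e)]$.
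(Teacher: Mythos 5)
Your central-range computation (LCLT $\times$ $u_N$, then Riemann sum) is exactly the paper's argument for $\PP_0(A_{\tilde\tau}=L, S_{\tilde\tau}=0,\,\tilde\tau/L^{2/3}\in[a,b])\sim C L^{-4/3}\int_a^b u^{-3}\bar\phi(u^{-3/2})\,du$, and your treatment of the tail $N\ge\epsilon^{-1}L^{2/3}$ is essentially right (though the cleaner route, which the paper takes, is a one-step Markov decomposition at $N/2$ combined with the second bound of \eqref{eq:crudeboudsnxtau} to get $\prob{A_N=L,S_N=0,\tilde\tau=N}\le CN^{-3}$ directly, which sums to $\le C'/(b^2L^{4/3})$; your initial $N^{-5/2}$ bound is indeed not enough, and your fallback writes $\sup_{v\ge\epsilon^{-3/2}}w_e(v)$ where the relevant regime is $v\le\epsilon^{3/2}$, since $L/N^{3/2}$ is \emph{small} on that range). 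There is also an arithmetic slip in the constant: $\int_0^\infty s^{-3}w_e(s^{-3/2})\,ds=\tfrac23\int_0^\infty u^{1/3}w_e(u)\,du=\tfrac23\,\bbE[A_1(e)^{1/3}]$, not $\bbE[A_1(e)]$; this is harmless since both are finite and positive.

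The genuine gap is in the small-$N$ range $1\le N\le\epsilon L^{2/3}$. You bound this range by plugging $a=L$ into the LCLT of Proposition~\ref{pro:lltbridge} and invoking the super-exponential decay of $w_e$. That controls the \emph{main} term $\sigma^{-1}N^{-3}w_e(L/(\sigma N^{3/2}))$, but it does nothing for the LCLT \emph{error}. Proposition~\ref{pro:lltbridge} says $\prob{A_N=L\mid\tilde\tau=N,S_N=0}=N^{-3/2}\big(\sigma^{-3/2}\bar\phi(L/(\sigma N^{3/2}))+\delta_{N,L}\big)$ with only $\gamma_N:=\sup_a|\delta_{N,a}|\to0$ as $N\to\infty$; in particular, for each fixed $N$ this error bound is a fixed positive number \emph{not} decaying in $L$. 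Multiplying by $u_N\sim CN^{-3/2}$ and summing the error contribution over $N\le\epsilon L^{2/3}$ gives $\sum_{N\le\epsilon L^{2/3}} CN^{-3}\gamma_N\to C\sum_{N\ge1}N^{-3}\gamma_N=\kappa>0$ as $L\to\infty$, a constant that dwarfs the target $L^{-4/3}$. In other words, the uniform-in-$a$ LCLT is simply the wrong tool when $a=L\gg N^{3/2}$: it cannot see that $\prob{A_N=L,\ldots}$ is actually tiny for $N$ far below $L^{2/3}$, and there is no way to fix this by invoking decay of the limiting density. The paper therefore does \emph{not} pass through the LCLT here; instead it bounds $\sum_{N\le aL^{2/3}}\prob{A_N=L,S_N=0,\tilde\tau=N}$ directly, by splitting the event according to whether the excursion reaches level $L/(4N)$ through one or two big jumps (controlled by the finite fourth moment) or continuously (controlled by Doob's inequality and the joint $(S_n,A_n)$ Gaussian LCLT of Proposition~\ref{lltalgarea}); this moderate-deviation argument is the real content of the lemma and is missing from your proposal.
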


We let $\etc{a,b}\subset (0,\infty)$ and for $i\leq d$ we let $[\bar a_i,\bar b_i]\subset (0,\infty)$.  We set  $\cR := \prod_{i=1}^d
[\bar a_i,\bar b_i]$ and 
\be{defmats}
\mathcal{S}_L(a,b,\mathcal{R}):=\big\{(N,x)\in \N\times\N^d\colon\, \tfrac{N}{L^{2/3}}\in [a,b],     \tfrac{x}{\sqrt{N}} \in \mathcal{R}\big\},
\ee
and we consider
\begin{align}\label{asym}
  I_L &:=\PP_0\Big(\frac{\tilde \tau }{L^{2/3}} \in \etc{a,b}, \frac{\mathbf S_{\floor{t\tilde \tau}}}{\sqrt{\tilde \tau}} \in \cR \mid A_{\tilde \tau} = L, S_{\tilde \tau} =0\Big) \\
\nonumber      &= \unsur{v_L} \sum_{(N,x)\in \mathcal{S}_L(a,b,\mathcal{R})}
  \prob{A_N=L,\mathbf S_{\floor{tN}}=x\mid \tilde \tau=N, S_{N}=0}  \prob{\tilde \tau=N, S_{N}=0}.
\end{align}
With the help of Proposition \ref{pro:lltbridge}  and then  by using \eqref{eq:thm6devatutinwachtel} and the fact that $\phi_{t}$ is bounded (see Remark \ref{rem1}), {\color{black}we claim that there exists  $\gep_1, \gep_2:\N^{d+2}\to \R$} such that  \eqref{asym} can be rewritten as
 \begin{align} \label{asym2}    
\nonumber I_L &= \unsur{v_L} \sum_{(N,x)\in \mathcal{S}_L(a,b,\mathcal{R})} \frac{1}{N^{r}} \Big[\phi_{t}\Big(\frac{L}{N^{3/2}},\frac{x}{N^{1/2}}\Big) + \gep_1(N,x,L) \Big] \,   \prob{\tilde \tau=N, S_{N}=0}\\
  & = \frac{C}{v_L}  \sum_{(N,x)\in \mathcal{S}_L(a,b,\mathcal{R})} \frac{1}{N^{r+3/2}} \Big[\phi_{t}\Big(\frac{L}{N^{3/2}},\frac{x}{N^{1/2}}\Big) +
  \gep_2(N,x,L)\Big],
  \end{align}
  where for $i\in\{1,2\}$,  $\gep_i(N,x,L)$ converges to $0$ as $N\to \infty$ uniformly in $(x,L)$.
 At this stage, we note that there exists $C>0$ such that 
 $$  \sum_{(N,x)\in\mathcal{S}_L(a,b,\mathcal{R})} \frac{1}{N^{r+3/2}} \le \frac{C}{L^{4/3}},$$
 and therefore it suffices to use  Lemma \ref{eq:asympataustau} to assert that there exists $\tilde C>0$ and  {\color{black}  $\gep_3:\N\to \R$ satisfying 
$\lim_{L\to \infty} \gep_3(L)=0$} such that 
\begin{equation}
  I_L =(1+\gep_3(L))\  \tilde C L^{4/3} \sum_{(N,x)\in\mathcal{S}_L(a,b,\mathcal{R})} \frac{1}{N^{r+3/2}} \ \phi_{t}\Big(\frac{L}{N^{3/2}},\frac{x}{N^{1/2}}\Big).
\end{equation}
We set $\psi(u) := \int_\cR \phi_{t}(u,x)\, dx$  for $u>0$ and we
recall that $\phi_t$ is  Lipschitz continuous (see %
\iflong
Lemma A.1%
\else
\cite[Lemma A.1]{CarPet17bext}%
\fi
). Thus,  we have by Riemann sum approximation that
there exists {\color{black} $\gep_4:\N^2\mapsto \R$ such that  for $L\in \N$ and  $\frac{N}{L^{2/3}}\in [a,b]$}
$$ \frac{1}{N^{d/2}} \sum_{ x\in \N\colon \atop xN^{-1/2}\in\cR}  \phi_{t}\Big(\frac{L}{N^{3/2}},\frac{x}{N^{1/2}}\Big)= (1+\gep_4(N,L))\ 
\psi\Big(\big(\tfrac{N}{L^{2/3}}\big)^{-3/2}\Big),$$
where $\gep_4(N,L)$ converges to $0$ as $L\to \infty$ uniformly in $\frac{N}{L^{2/3}}
\in [a,b]$. Therefore, recalling that $r=\frac{d}{2}+\frac{3}{2}$, we obtain that there exists  $\gep_5:\N\to \R$ satisfying 
$\lim_{L\to \infty} \gep_5(L)=0$ such that 
$$ I_L= \frac{\tilde C (1+\gep_5(L))}{L^{2/3}}\hspace{-.3cm} \sum_{N\in \N\colon \atop N L^{-2/3}\in\etc{a,b}}
\Big(\frac{N}{L^{2/3}}\Big)^{-3} \psi\Big(\Big(\frac{N}{L^{2/3}}\Big)^{-3/2}\Big) \to_{L\to \infty}  \tilde C \int_{\etc{a,b}\times \cR} 
\frac{1}{u^3} \phi_{t}(\frac{1}{u^{3/2}},x)\, du\, dx.$$
And this establishes the convergence in distribution of the
Proposition, since by Lemma  \ref{lem:norlennorarea}   $ \tilde C \, u^{-3}\, \phi_{t}( u^{-3/2},x)$ is the density of  $\Big(R,\frac{\cE(t_1 R)}{\sqrt{R}},\dots,\frac{\cE(t_d R)}{\sqrt{R}}\Big)$.

 \begin{proof}[of Lemma \ref{eq:asympataustau}]
 We denote by $\bar \phi$ the density of $A_1(e)$. Following the same steps as above, we prove that there exists 
 $C_1>0$  and {\color{black} $\gep_6:\N\to \R$ satisfying $\lim_{L\to \infty} \gep_6(L)=0$ } such that 
   $$ \PP_0\Big(A_{\tilde \tau}=L,S_{\tilde \tau}=0, \frac{\tilde \tau}{ L^{2/3}} \in \etc{a,b}\Big) = \frac{C_1 (1+\gep_6(L))}{L^{4/3}}
   \int_{\etc{a,b}} \frac{1}{u^{3}} \bar{\phi}(\frac{1}{u^{3/2}})\, du.$$

   Then, by applying Markov's property at time $K=\lfloor \frac{N}{2}\rfloor$ combined with the second inequality in \eqref{eq:crudeboudsnxtau}, we  derive easily that
   there exists $C>0$ such that for every $N,L\in \N$ we have  
   $ \prob{A_N=L,S_N=0,\tilde \tau=N} \le C N^{-3}$. As a consequence, there exists $C'>0$ such that for every $L\in \N$, 
    $$\PP_0\Big(A_{\tilde \tau}=L,S_{\tilde \tau}=0, \frac{\tilde \tau}{L^{2/3}}\ge b\Big ) \le C \sum_{N \ge b
     L^{2/3}} \frac{1}{N^{3}} \le \frac{C'}{ (b L^{2/3})^{2}} = \frac{C'}{b^2 L^{4/3}}.$$
 It remains to prove that  there exists a $G:(0,\infty)\to (0,\infty)$ satisfying $\lim_{a \to 0} G(a)=0$ and such that for every $a>0$ and for $L$ large enough, 
  \be{prlqu}
  \PP_0\Big(A_{\tilde \tau}=L,S_{\tilde \tau}=0, \frac{\tilde \tau}{ L^{2/3}}\le a\Big) \le \frac{G(a)}{L^{4/3}}.
   \ee
 To that aim we observe that, for $N\leq a L^{2/3}$, a trajectory $S$ in 
 $\{A_N=L,S_N=0,\tilde \tau=N\}$ necessarily satisfies $\max\{S_i\colon i\leq N\}\geq L/4N$. For this reason we can define 
 \begin{align}
\bar \tau_{L/4N}&:=\min\{i\geq 1\colon\, S_i\geq \tfrac{L}{4N}\}\quad \text{and} \quad\hat \tau_{L/4N}:=\max\{i\leq  N\colon\, S_i\geq \tfrac{L}{4N}\}.
 \end{align}
For $N\leq aL^{2/3}$ we write
$\cO_N:=\{A_N=L,S_N=0,\tilde \tau=N\}= \cB_N\, \cup\, \cC_N\, \cup \cD_N$ with 
\begin{gather}
 \nonumber \cB_N:=\cO_N\cap\big\{ S_{\bar \tau_{L/4N}}\leq \tfrac{3L}{8N}\big\},\quad
\cC_N:=\cO_N\cap\big\{S_{\hat \tau_{L/4N}}\leq \tfrac{3L}{8N}\big\},\\
 \cD_N:=\cO_N\cap \big\{ S_{\bar \tau_{L/4N}}> \tfrac{3L}{8N},  S_{\hat \tau_{L/4N}}> \tfrac{3L}{8N}\big\}.
\end{gather}

We note first that if $S\in \cD_N$ then, before time $N$,  $S$ must have at least two increments larger than $L/8N$ in modulus. These two increments are $X_{\bar \tau_{L/4N}}$ and $X_{\hat \tau_{L/4N}+1}$. As a consequence, we have 
\be{boundD}
 \cup_{N=1}^{aL^{2/3}} \cD_N\subset \big \{\exists i<j\leq aL^{2/3}\colon\, |X_i|\geq \tfrac{L^{1/3}}{8a},\,    |X_j|\geq \tfrac{L^{1/3}}{8a}\big\},
 \ee
and therefore $$\mathbb{P}_0\big(\cup_{N=1}^{aL^{2/3}} \cD_N\big)\leq (aL^{2/3})^2\  \mathbb{P}\big(|X_1|\geq \tfrac{L^{1/3}}{8a}\big)^2.$$ 
Since 
$X_1$ has finite fourth moment, Markov inequality yields $\displaystyle \mathbb{P}\big( |X_1|\geq \tfrac{L^{1/3}}{8a} \big)\leq \mathbb{E}(X_1^4)  \tfrac{a^{4}}{L^{4/3}}$
and  therefore $\displaystyle \mathbb{P}\big(\cup_{N=1}^{aL^{2/3}} \cD_N\big)\leq C \tfrac{a^{10}}{L^{4/3}}$ for some $C>0$. 

\smallskip

 At this stage we note that it is sufficient to focus on $\sum_{N=1}^{aL^{2/3}} \mathbb{P}_0(\cB_N)$ since by time reversal 
 $\sum_{N=1}^{aL^{2/3}} \mathbb{P}_0(C_N)$ is taken care of similarly. For $N\leq aL^{2/3}$ we decompose the set  $\cB_N$ 
 depending on $(r_1, r_2, r_3)$ the value taken by $(\bar \tau_{L/4N}, \hat \tau_{L/4N}-\bar \tau_{L/4N},N-\hat \tau_{L/4N})$, on  
 $(\alpha_1,\alpha_2,\alpha_3)$ the value taken by $(A_{r_1-1}, A_{r_2}-A_{r_1-1}, L-A_{r_2})$ and on 
 $(x_1,x_2)$ the values taken by $(S_{\bar \tau_{L/4N}}, S_{\hat \tau_{L/4N}})$. We note that, by construction, $\alpha_1$ and 
 $\alpha_3$ are necessarily smaller than $L/4$. We apply Markov's property at times $r_1$ and $r_1+r_2$ and reversibility for the last piece of trajectory 
(between times  $r_1+r_2$ and $N$) so that (recall \eqref{defGa})
 \be{masteq}
\mathbb{P}_0(\cB_N) =\sum_{(r_1,r_2,r_3)\in \mathfrak{G}_{3,N}}\sum_{\genfrac{}{}{0pt}{}{(\alpha_1, \alpha_2,\alpha_3)\in \mathfrak{G}_{3,L}}{\colon \alpha_1, \alpha_3\leq L/4 }}\sum_{x_1=\frac{L}{4N}  }^{\frac{3L}{8N}}    \, \sum_{ x_2=\frac{L}{4N}}^\infty
T_{r_1,\alpha_1,x_1}\,  W_{x_1,x_2,r_2,\alpha_2}\,   Y_{r_3,\alpha_3,x_2},
 \ee
 with
\begin{align}
\nonumber T_{r_1,\alpha_1,x_1}&:=\prob{r_1=\bar \tau_{L/4N}<\tilde \tau,\,  S_{r_1}=x_1,\,  A_{r_1-1}=\alpha_1} \\
W_{x_1,x_2,r_2,\alpha_2}&:=\mathbb{P}_{x_1}\big(\tilde \tau>r_2,\,  x_1+ A_{r_2}=\alpha_2,\,  S_{r_1}=x_2 \big)\\
\nonumber Y_{r_3,\alpha_3,x_2}&:= \prob {r_3=\bar \tau'_{L/4N}<\tilde \tau',\,  S'_{r_3}=x_2,\,   A_{r_3-1}=\alpha_3 }.
\end{align}
Our goal now, is to find an upper bound on $W_{x_1,x_2,r_2,\alpha_2}$ which only depends on $N$ and $L$. 
To that aim, we  remove the constraint $\tilde \tau>r_2$ to obtain the upper bound
\be{upbW}
W_{x_1,x_2,r_2,\alpha_2}\leq \prob{S_{r_2}=x_2-x_1, \,  A_{r_2}=\alpha_2-(1+r_2)\,  x_1}.
\ee
We recall below the LCLT established in \cite[Proposition 4.7]{CP15}, which is an improved version of  \cite[Proposition 2.3]{CD08}. 
\begin{proposition}\label{lltalgarea}
Let  $(X_i)_{i\geq 1}$ be an IID sequence of integer-valued centered random variables of variance $1$ and satisfying \eqref{defhyprw}. Then,
\be{ala}
\sup_{n\in \N}  \, \sup_{k,a\in \Z} n^3\,  \Big| \PP_0(S_n=k, A_n=a)-\tfrac{1}{\sigma^2\,  n^2}\, g\Big(\tfrac{k}{\sigma_\beta \sqrt{n}}, \tfrac{a}{\sigma_\beta n^{3/2}}\Big)\Big|< \infty,
\ee
with $g(y,z)=\frac{6}{\pi} e^{-2 y^2-6 z^2 +6 yz}$ for $(y,z)\in \R^2$.
\end{proposition}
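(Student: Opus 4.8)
The plan is to prove the local limit theorem for the joint law of $(S_n, A_n)$ under $\PP_0$ via the standard Fourier-inversion (Edgeworth-type) argument, exploiting the extra moment assumptions \eqref{defhyprw}. Write $\varphi(\theta_1,\theta_2) := \bbE\big(e^{i\theta_1 X_1}\big)$ for the characteristic function of a single increment; then the characteristic function of $(S_n, A_n) = \big(\sum_{i=1}^n X_i, \sum_{i=1}^n (n+1-i) X_i\big)$ is the product $\prod_{i=1}^n \varphi\big(\theta_1 + (n+1-i)\theta_2\big)$. By Fourier inversion,
\begin{equation*}
\PP_0(S_n=k, A_n=a) = \unsur{(2\pi)^2} \int_{[-\pi,\pi]^2} e^{-i(k\theta_1 + a\theta_2)} \prod_{i=1}^n \varphi\big(\theta_1 + (n+1-i)\theta_2\big)\, d\theta_1\, d\theta_2,
\end{equation*}
and the target Gaussian density $\frac{1}{n^2}g\big(\frac{k}{\sqrt n}, \frac{a}{n^{3/2}}\big)$ (with $\sigma=1$) has an analogous representation as a Gaussian integral over all of $\R^2$, where $g$ is precisely the density of $(B_1, \int_0^1 B_s\, ds)$; the covariance matrix has entries $1$, $\tfrac12$, $\tfrac13$, which one checks produces exactly $g(y,z)=\frac{6}{\pi}e^{-2y^2-6z^2+6yz}$.

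The key steps, in order, are: first, rescale by setting $\theta_1 = u/\sqrt n$, $\theta_2 = v/n^{3/2}$, so that $\theta_1 + (n+1-i)\theta_2 = \frac{1}{\sqrt n}\big(u + \frac{n+1-i}{n}v\big)$, turning the product into $\prod_{i=1}^n \varphi\big(\tfrac{1}{\sqrt n}(u + \tfrac{n+1-i}{n}v)\big)$ and the prefactor $d\theta_1 d\theta_2 = n^{-2}\, du\, dv$. Second, on a region $|u|, |v| \le \delta\sqrt n$ (say), use the Taylor expansion of $\log\varphi$: because $\bbE(X_1)=0$, $\bbE(X_1^3)=0$ and $\bbE(X_1^4)<\infty$, one has $\log\varphi(\theta) = -\tfrac12\theta^2 + O(\theta^4)$ with the cubic term vanishing, so $\log\prod_i \varphi = -\tfrac12 \cdot \tfrac1n\sum_i (u + \tfrac{n+1-i}{n}v)^2 + \text{error}$, and the Riemann sum $\tfrac1n\sum_{i=1}^n(u + \tfrac{n+1-i}{n}v)^2 \to \int_0^1 (u+tv)^2\, dt = u^2 + uv + \tfrac13 v^2$, which is the relevant quadratic form. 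Crucially, the vanishing third moment makes the error term $O\big(\tfrac1n\sum_i |u+\tfrac{n+1-i}{n}v|^4 \cdot \tfrac1n\big) = O\big(\tfrac{(|u|+|v|)^4}{n}\big)$ uniformly, giving the extra power of $n$ that upgrades the ordinary $o(n^2)$ LCLT of \cite{CD08} to the sharp $O(n^{-3})$ bound in \eqref{ala} — this is exactly the point where \eqref{defhyprw} is used. Third, on the complementary region (either $|u|$ or $|v|$ between $\delta\sqrt n$ and the box boundary $\pi\sqrt n$, resp.\ $\pi n^{3/2}$), bound the integrand using $|\varphi(\theta)| \le e^{-c\theta^2}$ for $|\theta|$ small together with $\sup_{\eta \le |\theta|\le \pi}|\varphi(\theta)| < 1$ (lattice aperiodicity, which holds since the walk is genuinely $\Z$-valued and aperiodic up to the obvious reduction): here the product of $n$ such factors decays like $e^{-cn}$ or faster, so this contribution is $O(e^{-\beta n})$ uniformly in $k, a$. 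Fourth, match the main term against the Gaussian integral defining $g$, controlling the difference by the Taylor error above plus the tail of the Gaussian outside the box, both $O(n^{-3})$ after the $n^{-2}$ prefactor, uniformly in $(k,a)$ since the oscillating phase $e^{-i(k\theta_1+a\theta_2)}$ has modulus $1$.

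The main obstacle is the uniform control of the Taylor-remainder integral over the intermediate-frequency annulus $\{u,v : |u|\vee|v| \ge \delta\sqrt n\} \cap \{\text{box}\}$, and in particular making sure the $v$-variable (which lives on a box of size $\pi n^{3/2}$, much larger than $\sqrt n$) does not spoil the estimate: one must split according to whether $|u + \tfrac{n+1-i}{n}v|$ is small for a positive fraction of indices $i$ or not, and use that if $v$ is large then $u + tv$ ranges over an interval of length $\sim|v|$ as $t$ runs over $[0,1]$, so at most a vanishing fraction of the $\varphi$-factors can be close to $1$ in modulus — hence the product is still exponentially small. This is a two-dimensional version of the standard one-dimensional argument and appears essentially as \cite[Proposition 4.7]{CP15}; since that reference and \cite[Proposition 2.3]{CD08} already contain the full computation, I would present the proof by first recalling the characteristic-function identity and the key role of \eqref{defhyprw} in killing the cubic term, then citing \cite[Proposition 4.7]{CP15} for the detailed frequency-decomposition bookkeeping, and finally verifying that the Gaussian limit has the stated explicit density $g(y,z)=\frac{6}{\pi}e^{-2y^2-6z^2+6yz}$ by computing the $2\times 2$ covariance matrix of $(B_1,\int_0^1 B_s\,ds)$ and inverting it.
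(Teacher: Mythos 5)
Your proposal is correct and matches the paper's treatment of this statement: the paper offers no proof of Proposition \ref{lltalgarea}, recalling it verbatim from \cite[Proposition 4.7]{CP15}, and your Fourier-inversion sketch (characteristic function of $(S_n,A_n)=(\sum_i X_i,\sum_i(n+1-i)X_i)$ rescaled by $(n^{-1/2},n^{-3/2})$, Taylor expansion of $\log\varphi$ with the cubic term killed by \eqref{defhyprw}, Riemann-sum identification of the covariance, exponential high-frequency bounds) is exactly the mechanism behind that reference, as the paper itself indicates when it quotes \cite[Equation (7.23)]{CP15} in its proof of \eqref{eq:smoothnessstandartllt22}. One minor remark: carrying out the covariance computation you describe for $(B_1,\int_0^1 B_s\,ds)$ gives determinant $1/12$ and hence normalizing constant $\sqrt{3}/\pi$ rather than the $6/\pi$ printed in the statement (the exponent $-2y^2-6z^2+6yz$ is correct), so the constant in $g$ appears to be a typo in the paper rather than a flaw in your argument.
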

We observe that there exist $c_1,c_2>0$ such that 
$g(y,z)\leq c_1 e^{-c_2 z^2}$ for every $(y,z)\in \R^2$, and also that 
the area $\alpha_2-(1+r_2) x_1\geq \frac{L}{8}$ because $x_1\leq \frac{3L}{8N}$, $1+r_2\leq N$ and $\alpha_2\geq \frac{L}{2}$.   We begin with the case  $r_2 \geq N^{2/3}$ and we apply Proposition \ref{lltalgarea} to obtain that there exist
$C>0$ and $c_3>0$ such that 
\begin{align}
\nonumber W_{x_1,x_2,r_2,\alpha_2}&\leq \frac{C}{r_2^3}+\frac{c_1}{r_2^2} \exp\Big[-c_2 \frac{(\alpha_2-(1+r_2) x_1)^2}{r_2^{3}}\Big] \leq \frac{C}{r_2^3}+\frac{c_1}{r_2^2} \exp\Big[-c_3 \Big(\frac{L^2}{r_2^{3}}\Big)\Big]\\
&\leq \frac{C}{r_2^3}+\frac{c_1}{L^{4/3}} 
\Big(\frac{L^{2/3}}{r_2}\Big)^2 \exp\Big[-c_3 \big(\frac{L^{2/3}}{r_2}\big)^3\Big].
\end{align}
Since $x\mapsto x^2 e^{-c_3 x^3}$ is bounded  on $[0,\infty)$, we conclude  that provided $C$ is chosen large enough we have for $L\in \N, \,  N\leq aL^{2/3}$ and  $r_2\geq N^{2/3}$
\be{boundW}
W_{x_1,x_2,r_2,\alpha_2}\leq \frac{C}{N^2}+\frac{C}{L^{4/3}}.
\ee
For $r_2\leq N^{2/3}$, we recall that $\alpha_2-(1+r_2) \, x_1\geq \frac{L}{8}$ and therefore  
$A_{r_2}=\alpha_2-(1+r_2)\,  x_1$ yields that $S^*_{r_2}:=\max\{|S_i|, i\leq r_2\}\geq \frac{L}{8 r_2}$. Consequently, by using the fact that $(S_i)_{i\in \N}$ is a martingale with finite fourth moment we apply Markov's property and Doob's inequality to obtain that there exist  $C, C'>0$  such that  for  $L\in \N$,   $N\leq aL^{2/3}$ and  $r_2\leq N^{2/3}$
\be{ineqmar}
W_{x_1,x_2,r_2,\alpha_2}\leq \prob{S^*_{r_2}\geq \tfrac{L}{8 r_2}}\leq \   C\,  \mathbb{E}_0\Big[\big(S_{r_2}\big)^4\Big] \, \frac{r_2^4}{L^4}\leq\  C' \frac{r_2^6}{L^4}\leq\  C' \frac{N^4}{L^4}.
\ee
Going back to \eqref{masteq}, we use  (\ref{boundW}-\ref{ineqmar}) to assert that provided 
$C$ is chosen large enough we have 
\begin{align}\label{cling}
\mathbb{P}_0(\cB_N) &\leq C \sum_{(r_1,r_2,r_3)\in \mathfrak{G}_{3,N}}\sum_{\genfrac{}{}{0pt}{}{(\alpha_1, \alpha_2,\alpha_3)\in \mathfrak{G}_{3,L}}{\colon \alpha_1,\alpha_3\leq L/4 }}\sum_{x_1=\frac{L}{4N}  }^{\frac{3L}{8N}}    \, \sum_{ x_2=\frac{L}{4N}}^\infty
T_{r_1,\alpha_1,x_1}\, \Big(\frac{1}{N^2}+\frac{1}{L^{4/3}}+\frac{N^4}{L^4}\Big) \,   Y_{r_3,\alpha_3,x_2},
\end{align}
and we observe also that there exists a $C>0$ such that for every $L\in \N$ and $N\leq aL^{2/3}$, 
\be{sommeta}
\sum_{r_1=1}^{N} \sum_{\alpha_1=1}^{\frac{L}{4}} \sum_{x_1=\frac{L}{4N}}^{\frac{3L}{8N}} 
T_{r_1,\alpha_1,x_1}\leq \prob{\bar \tau_{\frac{L}{4N}}<\tilde \tau}\leq C\frac{N}{L}\quad \text{and} \quad 
\sum_{r_3=1}^{N} \sum_{\alpha_3=1}^{\frac{L}{4}} \sum_{x_3=\frac{L}{4N}}^{\infty} 
Y_{r_3,\alpha_3,x_2}\leq C\frac{N}{L}.
\ee
Combining \eqref{cling} with (\ref{sommeta}) we conclude that provided  $C>0$ is chosen large enough, 
we have for every $L\in \N$ and $N\leq a L^{2/3}$ that $\prob{\cB_N}\leq
C
\big(\frac{1}{L^2}+\frac{N^2}{L^{2+\frac{4}{3}}}+\frac{N^6}{L^6}\big)$
and therefore 
\be{finbouA}
\sum_{N=1}^{aL^{2/3}} \prob{\cB_N}\leq C \Big(\frac{a}{L^{4/3}}+\frac{a^3}{L^{4/3}}+\frac{a^7}{L^{4/3}}\Big),
\ee
which completes the proof of \eqref{prlqu}.

 \end{proof}

\subsection{Tightness}\label{subsec:tightness}
\begin{lemma}\label{lem:tight}
  Let $\nu_L$ be the  distribution of the process 
\begin{equation}
\etp{\frac{\tilde \tau}{L^{2/3}} ;
  \unsur{\sqrt{\tilde \tau}}\valabs{S_{\floor{s\tilde \tau}}}, s\in [0,1] },
\end{equation}
with $S$ sampled under 
  $\prob{ . \mid A_{\tilde \tau}=L, S_{\tilde \tau}=0}$. Then $(\nu_L,
L\ge 1)$ is a tight sequence.
\end{lemma}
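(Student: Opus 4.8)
The plan is to prove tightness of the two marginals separately, since tightness in $\R_+\times D([0,1])$ is equivalent to tightness of each coordinate. For the coordinate $\tilde\tau/L^{2/3}$, I would divide the two estimates established inside the proof of Lemma~\ref{eq:asympataustau} — namely $\prob{A_{\tilde\tau}=L,S_{\tilde\tau}=0,\,\tilde\tau/L^{2/3}\geq b}\leq C'b^{-2}L^{-4/3}$ and \eqref{prlqu} — by $v_L\sim CL^{-4/3}$; this shows that for every $\epsilon>0$ there exist $0<a<b$ such that $\limsup_{L}\prob{\tilde\tau/L^{2/3}\notin[a,b]\mid A_{\tilde\tau}=L,S_{\tilde\tau}=0}<\epsilon$, which already gives tightness of the first coordinate. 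By the time reversal recalled just before the statement, it then suffices to bound, uniformly over $N\in[aL^{2/3},bL^{2/3}]$ as $L\to\infty$, the uniform modulus of continuity of $\bigl(\tfrac{1}{\sqrt{\tilde\tau}}|S_{\floor{s\tilde\tau}}|\bigr)_{s\in[0,1/2]}$ under $\prob{\cdot\mid\tilde\tau=N,A_N=L,S_N=0}$.

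The core is a one-sided comparison, on the first half of the excursion, with the Brownian meander. Fix $N$, put $K=\floor{N/2}$, and for $\delta,\eta>0$ let $\mathcal{M}_{N,\delta,\eta}$ be the $\sigma(S_0,\dots,S_K)$-measurable event that $\tilde\tau>K$ and $\max\{|S_i-S_j|:\,i,j\leq K,\ |i-j|\leq\delta N\}>\eta\sqrt N$; since $\bigl||x|-|y|\bigr|\leq|x-y|$ and $|\floor{sN}-\floor{tN}|\leq|s-t|N+1$, the modulus-of-continuity event for the first-half process at scale $\delta$ and height $\eta$ is contained, under the conditioning, in $\mathcal{M}_{N,2\delta,\eta}$. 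Now apply Markov's property at time $K$: decomposing $A_N=A_K+A_{N-K}$ (the second area computed from the position $S_K$) and reversing the second half as in \eqref{tr1}, the factor $\PP_x(\tilde\tau=N-K,S_{N-K}=0,A_{N-K}=L-a)$ becomes a quantity of the form $\PP_0(\tilde\tau'>N-K,\,S'_{N-K}=x,\,A_{N-K}(S')=L-a+x)$ for the walk with increments $-X_i$ (which still satisfies \eqref{defhyprw}), and this is at most $C(N-K)^{-5/2}$ by \eqref{eq:crudeboudsnxtau}. Summing over the remaining variables and using \eqref{eq:simtau} (so $\prob{\tilde\tau>K}\leq C_1K^{-1/2}$ and $(N-K)^{-5/2}K^{-1/2}\leq CN^{-3}$) gives $\prob{\tilde\tau=N,A_N=L,S_N=0,\mathcal{M}_{N,\delta,\eta}}\leq CN^{-3}h_N$, where $h_N:=\prob{\mathcal{M}_{N,\delta,\eta}\mid\tilde\tau>K}$.

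It remains to make $h_N$ small for large $N$. By \cite{B76}, under $\prob{\cdot\mid\tilde\tau>K}$ the process $(S_{\floor{uK}}/\sqrt K)_{u\in[0,1]}$ converges in $D([0,1])$ to the Brownian meander, which has continuous paths; the sequence is therefore $C$-tight, which gives $\lim_{\delta\to0}\limsup_{K}\prob{\sup_{|u-u'|\leq\delta}|S_{\floor{uK}}-S_{\floor{u'K}}|>\eta'\sqrt K\mid\tilde\tau>K}=0$ for every $\eta'>0$. Since the integers $i\leq K$ entering $\mathcal{M}_{N,\delta,\eta}$ are exactly of the form $\floor{uK}$, and $\delta N$ and $2\delta K$, as well as $\sqrt N$ and $\sqrt{2K}$, are comparable (as $K=\floor{N/2}$), this yields $\limsup_N h_N\leq g(\delta)$ with $g(\delta)\to0$ as $\delta\to0$. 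Finally I would sum over $\{\tilde\tau=N\}$, splitting according to whether $N/L^{2/3}\in[a,b]$: the probability of the bad-modulus event on $[0,\tfrac12]$ under $\prob{\cdot\mid A_{\tilde\tau}=L,S_{\tilde\tau}=0}$ is at most $\prob{\tilde\tau/L^{2/3}\notin[a,b]\mid\cdots}+v_L^{-1}\sum_{N\in[aL^{2/3},bL^{2/3}]}CN^{-3}h_N$. The sum has at most $bL^{2/3}$ terms, each $\leq Ca^{-3}L^{-2}\sup_{N\geq aL^{2/3}}h_N$, hence by Lemma~\ref{eq:asympataustau} is $\leq C'ba^{-3}\sup_{N\geq aL^{2/3}}h_N$, whose $\limsup$ over $L$ is $\leq C'ba^{-3}g(\delta)$, which $\to0$ as $\delta\to0$. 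Choosing first $a,b$ and then $\delta$ small proves the modulus estimate on $[0,\tfrac12]$; by time reversal it holds on $[\tfrac12,1]$ as well, and together with $S_{\floor{0\cdot\tilde\tau}}=0$ this supplies the compact-containment and modulus conditions for tightness in $D([0,1])$, hence tightness of $(\nu_L)_L$.

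The step I expect to be the main obstacle is keeping the quantitative balance in the comparison of the second paragraph: bounding the second half-excursion probability only by the crude $C(N-K)^{-2}$ available from \eqref{eq:simtau2} would make the effective constant grow like $N^{1/2}$ and destroy the argument, so one genuinely needs the sharper $(N-K)^{-5/2}$ bound, which forces one to retain the positivity constraint on the second half and invoke \eqref{eq:crudeboudsnxtau} after time reversal. A secondary point to verify is that the elementary inclusions between the modulus-of-continuity event, $\mathcal{M}_{N,\delta,\eta}$ and the meander modulus event carry constants uniform in $N$, and that the two half-estimates glue correctly across $s=\tfrac12$.
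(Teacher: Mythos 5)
Your proof is correct and takes a genuinely simpler route than the paper's. Both proofs share the same skeleton: reduce, via time reversal, to a modulus‑of‑continuity estimate on $[0,\tfrac12]$; apply Markov at $K=\lfloor N/2\rfloor$; and invoke the meander invariance principle of \cite{B76} to kill the conditional modulus probability as $\delta\to 0$. The difference is in how the second half of the excursion is handled. The paper keeps a conditional structure: it introduces the event $\cK_{L,c,\alpha,y}$ which localizes $S_{\tilde\tau/2}$ and $A_{\tilde\tau/2}$ in compact windows, writes the conditional probability as a ratio $\sum_N \Lambda_{1,L,N}/\sum_N \Lambda_{2,L,N}$, and then must compare $\Lambda_{1,L,N}/\Lambda_{2,L,N}$ with $K_{1,L,N}/K_{2,L,N}$; this last step requires the local limit theorem of Proposition~\ref{pro:llttausupncomplet} (cf.\ \eqref{pru}) to show that $\hat b_{N,v,a}$ varies by at most a multiplicative constant over the relevant range. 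You instead bound the \emph{unconditional} probability $\prob{\tilde\tau=N, A_N=L,S_N=0,\text{bad}}$ from above: the second‑half factor $\PP_x(\tilde\tau=N-K,S_{N-K}=0,A_{N-K}=L-a)$ is, after time reversal, uniformly $\le C(N-K)^{-5/2}$ by \eqref{eq:crudeboudsnxtau} — you are right that the cruder $(N-K)^{-2}$ from \eqref{eq:simtau2} would be fatal — giving $\le CN^{-3}h_N$ with $h_N=\prob{\mathcal M_{N,\delta,\eta}\mid\tilde\tau>K}$; summing over $N\in[aL^{2/3},bL^{2/3}]$ and dividing by $v_L\sim CL^{-4/3}$ (Lemma~\ref{eq:asympataustau}) then works. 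What this buys is that you bypass Proposition~\ref{pro:llttausupncomplet} entirely — the paper's tightness argument needs the LCLT of Section~6, while yours only needs the already‑proven Lemma~\ref{eq:asympataustau}, the crude bound \eqref{eq:crudeboudsnxtau}, and \cite{B76}. The price (negligible here) is that you invoke the sharp asymptotic for $v_L$ explicitly rather than letting the normalizations cancel in a ratio. The minor points you flagged — uniformity of the inclusion of events, and gluing at $s=\tfrac12$ — are indeed the only things to verify, and they go through exactly as you describe.
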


\begin{proof}
We consider  the continuity modulus of a given function
$f:[0,1]\mapsto \R$ : given $x<y\in [0,1]$ and $\gep>0$ we set 
\be{modcon}
\Gamma_{[x,y]}(f,\gep):= \sup \big\{|f(t)-f(s)|\colon s,t \in [x,y]; t-s\leq \gep\big\}.
\ee
In what follows, we denote by $\hat S_{\tilde \tau}$ the process $\big\{ \frac{1}{\sqrt{\tilde \tau}} S_{\lfloor t\tilde \tau\rfloor}, t\in [0,1]\big\}$ and for every $N\in \N$ 
by $\tilde S_N$ the process $\big\{ \frac{1}{\sqrt{N}} S_{\lfloor t N\rfloor}, t\in [0,1]\big\}$.  By reversibility, Lemma
\ref{lem:tight} will be proven once we show that for every $\delta>0$,
\be{prut}
\lim_{\gep \to 0} \limsup_{L\to \infty} \prob{\Gamma_{[0,\frac{1}{2}]}\big(\hat S_{\tilde \tau},\gep\big)>\delta \, \mid\,  A_{\tilde \tau}=L,\, S_{\tilde \tau}=0}=0.
\ee
A slight modification of Proposition~\ref{pro:cvconjointetaurw} (whose proof is completely similar) guarantees us that, when sampled under   $\prob{. \mid A_{\tilde \tau} =L, S_{\tilde \tau}=0}$ the 
random vector 
$$\displaystyle \Big(\frac{\tilde\tau}{ L^{2/3}},\frac{A_{\tilde \tau/2}}{L},  \frac{S_{\tilde \tau/2}}{L^{1/3}}\Big)\quad \text{
converges in distribution to}
\quad \displaystyle \Big( R,\, \frac{1}{R^{3/2}} \int_0^{R/2} \cE(u)\, du,\,  \unsur{\sqrt{R}}\cE\Big(\frac{R}{2}\Big)\Big).$$
Hence 
for every $\gep>0$ there exist $[c_1,c_2],  [y_1,y_2] \subset (0,\infty)$ and $[\alpha_1,\alpha_2]\subset (0,1)$, such that 
\be{eq:liminfcont}
\liminf_{L\to \infty}\,  \prob{\cK_{L,c,\alpha,y}\, \mid \,  A_{\tilde \tau}=L,\, S_{\tilde \tau}=0}\geq 1-\gep,
\ee
with 
\be{defAm}
\cK_{L,c,\alpha,y}:=\bigg\{\frac{\tilde \tau}{L^{2/3}}\in [c_1,c_2], \frac{A_{\tilde \tau/2}}{L}\in [\alpha_1,\alpha_2],\frac{S_{\tilde \tau/2}}{L^{1/3}}\in [y_1,y_2]\bigg\}.
\ee
Note that we can safely assume that
$\frac{y_1}{\sqrt{c_1}}<\frac{y_2}{\sqrt{c_2}}$ and thus 
\be{prut1}
 \prob{\Gamma_{[0,\frac{1}{2}]}\big(\hat S_{\tilde \tau},\gep\big)>\delta \, \mid\,  A_{\tilde \tau}=L,\, S_{\tilde \tau}=0}\leq B_{1,L}+B_{2,L},
 \ee
 with
 \begin{align}
 B_{1,L}&=
  \prob{\cK_{L,c,\alpha,y}^c\, \mid \,  A_{\tilde \tau}=L,\, S_{\tilde \tau}=0},\\
\nonumber  B_{2,L}&=  \bbP_0\big(\Gamma_{[0,\frac{1}{2}]}\big(\hat S_{\tilde \tau},\gep\big)>\delta \, \mid\, V\in \cK_{L,c,\alpha,r},\,   A_{\tilde \tau}=L,\, S_{\tilde \tau}=0\big).
 \end{align}
 The quantity $B_{1,L}$ is taken care of with \eqref{eq:liminfcont}, so that we only need to focus on $B_{2,L}$. To that aim, we partition 
 the event $\cK_{L,c,\alpha,y}$ depending on the value $N$ taken by $\tilde \tau$, i.e.,  
 \be{defBB}
 B_{2,L}=\frac{\sum_{N=c_1 L^{2/3}}^{c_2 L^{2/3}}
   \Lambda_{1,L,N}}{\sum_{N=c_1 L^{2/3}}^{c_2 L^{2/3}} \Lambda_{2,L,N}},
 \ee

 \begin{align*}
 \text{with}\quad \Lambda_{1,L,N}&=\sum_{a=\alpha_1 L}^{\alpha_2 L}\,  \sum_{v=y_1 L^{1/3}}^{y_2 L^{1/3}} b_{N,v,a} \, {\widehat b}_{N,v,L-a}\quad \text{and}\quad   \Lambda_{2,L,N}=\sum_{a=\alpha_1 L}^{\alpha_2 L} \, \sum_{v=y_1 L^{1/3}}^{y_2 L^{1/3}} {\widehat b}_{N,v,a}\,  {\widehat b}_{N,v,L-a}, 
  \end{align*}
\begin{align*}
 \text{and with}\qquad b_{N,v,a}&=\prob{\Gamma_{[0,\frac{1}{2}]}\big(\tilde S_N,\gep\big)>\delta,\,  S_{\frac{N}{2}}=v,\,  A_{\frac{N}{2}}=a,\,  \tilde \tau>\tfrac{N}{2}},\\
{\widehat b}_{N,v,k}&=\prob{S_{\frac{N}{2}}=v,\,  A_{\frac{N}{2}}=k,\,  \tilde \tau>\tfrac{N}{2}},
\end{align*}
where we have used Markov's property at $N/2$ and time reversal in $\widehat b$.
Let us finally set for $N\in L^{2/3} [c_1,c_2]$, 
 \be{eq:upb}
 K_{1,L,N}=\sum_{a=\alpha_1 L}^{\alpha_2 L}\,  \sum_{v=y_1 L^{1/3}}^{y_2 L^{1/3}} b_{N,v,a}\leq \prob{\Gamma_{[0,\frac{1}{2}]}\big(\tilde S_N,\gep\big)>\delta, \tilde \tau>\tfrac{N}{2}},
 \ee
 and 
\begin{align}\label{eq:bud}
 K_{2,L,N}=\sum_{a=\alpha_1 L}^{\alpha_2 L} \, \sum_{v=y_1 L^{1/3}}^{y_2 L^{1/3}} {\widehat b}_{N,v,a}&= \prob{S_{\frac{N}{2}}\in L^{1/3}\,[y_1,y_2], 
 A_{\frac{N}{2}} \in L\, [\alpha_1,\alpha_2] ,  \tilde \tau>\tfrac{N}{2}}\\
\nonumber  &\geq \PP_0\Big(S_{\frac{N}{2}}\in \sqrt{N}\,\Big[\tfrac{y_1}{\sqrt{c_1}},\tfrac{y_2}{\sqrt{c_2}}\Big], 
 A_{\frac{N}{2}} \in  N^{3/2}\,\Big[\tfrac{\alpha_1}{c_1^{3/2}},\tfrac{\alpha_2}{c_2^{3/2}}\Big] ,  \tilde \tau>\tfrac{N}{2}\Big),
\end{align}
where we have used that $c_1 L^{2/3}\leq N\leq c_2 L^{2/3}$ in combination with the two inequalities \\ $\frac{y_1}{\sqrt{c_1}}<\frac{y_2}{\sqrt{c_2}}$ and $\frac{\alpha_1}{c_1^{3/2}}<\frac{\alpha_2}{c_2^{3/2}}$. Thus, with (\ref{eq:upb}--\ref{eq:bud}) we can safely write 
\be{upp}
\frac{K_{2,L,N}}{K_{1,L,N}}\leq \frac{\prob{\Gamma_{[0,\frac{1}{2}]}\big(\tilde S_N,\gep\big)>\delta\, \mid\, \tilde \tau>\frac{N}{2}}}{\PP_0\Big(S_{\frac{N}{2}}\in \sqrt{N}\,\Big[\tfrac{y_1}{\sqrt{c_1}},\tfrac{y_2}{\sqrt{c_2}}\Big], 
 A_{\frac{N}{2}} \in  N^{3/2}\,\Big[\tfrac{\alpha_1}{c_1^{3/2}},\tfrac{\alpha_2}{c_2^{3/2}}\Big]\, \mid\,  \tilde \tau>\tfrac{N}{2}\Big)}.
\ee
At this stage, we use the equality $\tilde S_N(s)=\frac{1}{\sqrt{2}} \tilde S_{\frac{N}{2}}(2s)$ for every $s\in [0,1/2]$  and the convergence in distribution proven in \cite{B76}, i.e.,   
\be{convmean}
\lim_{N\to \infty} \Big( \tilde S_{\frac{N}{2}}\, ;\, \PP_0\Big(\cdot\,  \mid \, \tilde \tau>\tfrac{N}{2}\Big)\Big)=( \cM_s)_{s\in [0,1]},
\ee
where $\cM$ is the standard Brownian meander (see \cite{Imhof84}), to assert that 
\be{eq:limsumea}
\lim_{\gep\to 0} \limsup_{N\to \infty} \prob{\Gamma_{[0,\frac{1}{2}]}\big(\tilde S_N,\gep\big)>\delta\, \big | \, \tilde \tau>\tfrac{N}{2}}=0,
\ee
and that 
\begin{multline}\label{eq:limsumar}
\lim_{N\to \infty } \PP_0\Big(S_{\frac{N}{2}}\in \sqrt{N}\,\Big[\tfrac{y_1}{\sqrt{c_1}},\tfrac{y_2}{\sqrt{c_2}}\Big], 
 A_{\frac{N}{2}} \in  N^{3/2}\,\Big[\tfrac{\alpha_1}{c_1^{3/2}},\tfrac{\alpha_2}{c_2^{3/2}}\Big]\,\big |\,  \tilde \tau>\tfrac{N}{2}\Big)\\
 =\PP\Big( \cM^+_1 \in \Big[\tfrac{\sqrt{2}y_1}{\sqrt{c_1}},\tfrac{\sqrt{2} y_2}{\sqrt{c_2}}\Big], 
 A_{1}(\cM^+) \in  \Big[\tfrac{2^{3/2}\alpha_1}{c_1^{3/2}},\tfrac{2^{3/2}\alpha_2}{c_2^{3/2}}\Big]\Big),
\end{multline}
with $A_1(\cM^+)=\int_0^{1} \cM^+_s ds$. The r.h.s. in \eqref{eq:limsumar} being positive, we deduce from (\ref{eq:limsumea}--\ref{eq:limsumar}) that
\begin{align}\label{eq:equalims}
\lim_{\gep\to 0} \limsup_{L\to \infty} \sup_{N\in L^{2/3} [c_1,c_2]} \frac{K_{1,L,N}}{K_{2,L,N}}= \lim_{\gep\to 0} \limsup_{N\to \infty} \sup_{L\in N^{3/2} [\frac{1}{c_2},\frac{1}{c_1}]} \frac{K_{1,L,N}}{K_{2,L,N}}=0,
\end{align}
where the first equality in \eqref{eq:equalims} is just an inversion of the supremums over $N$ and $L$.  Coming back to 
\eqref{defBB} we can rewrite 
\be{dflm}
 B_{2,L}=\frac{\sum_{N=c_1L^{2/3}}^{c_2 L^{2/3}} \Lambda_{2,L,N} \frac{\Lambda_{1,L,N}}{\Lambda_{2,L,N}}}{\sum_{N=c_1L^{2/3}}^{c_2 L^{2/3}} \Lambda_{2,L,N}},
 \ee
 so that the proof of the step will be complete once we show that 
 \begin{align}\label{eq:limsuarr}
\lim_{\gep\to 0} \limsup_{L\to \infty} \sup_{N\in L^{2/3} [c_1,c_2]} \frac{\Lambda_{1,L,N}}{\Lambda_{2,L,N}}=0.
\end{align}
It remains to use a  Proposition~\ref{pro:llttausupncomplet} to check that there exist $C_1<C_2$ such that for every $N\geq 1$
\begin{align}\label{pru}
C_1 \max_{v\in \sqrt{N} [x_1,x_2], a\in \N^{3/2} [y_1,y_2]} {\widehat b}_{N,v,a}&\leq  \min_{v\in \sqrt{N} [x_1,x_2], a\in \N^{3/2} [y_1,y_2]} {\widehat b}_{N,v,a}\\
\nonumber &\leq C_2 \max_{v\in \sqrt{N} [x_1,x_2], a\in \N^{3/2} [y_1,y_2]} {\widehat b}_{N,v,a},
\end{align}
which is sufficient to assert that \eqref{eq:equalims} implies \eqref{eq:limsuarr}.
\end{proof}

\subsection{Time changing. End of the proof of Theorem \ref{thm:a}}\label{sub:timechange}

For notational convenience, we set 
$\nu(\cE):=(\nu(\cE)(s); s\in [0,1])$ with $\nu (\cE)(s)=\tfrac{1}{\sqrt{R_\cE}}\cE_{s R_\cE}$ for $s\in [0,1]$
 and also  $\hat{S}:=(\hat{S}(s); s\in [0,1])$ with $\hat{S}(s) = \unsur{\sqrt{\tilde \tau}} S_{\floor{s\tilde \tau}}$ for
$s\in [0,1]$. We will denote by $D_{[0,1]}$ the set of cadlag functions defined on $[0,1]$ endowed with the Skorokhod topology
and by $C_{[0,1]}$ the set of continuous function on $[0,1]$ endowed with the uniform metric. From the combination of tightness and finite dimensional
convergence, we deduce that as $L\to \infty$ and with $S$ sampled from $\prob{. \mid A_{\tilde \tau}= L, S_{\tilde \tau}=0}$ 
\begin{equation}\label{convcentr}
 \Big(\frac{\tilde \tau}{L^{2/3}}, \hat{S}\Big) \ \text{converges in distribution to}\  \big(R_\cE, \nu(\cE)\big).
\end{equation}
Then, we introduce the operator $\hat A:D_{[0,1]}\to C_{[0,1]}$  defined for $V\in D_{[0,1]}$ and  $t\in [0,1]$ as 
$$ \hat A(V)(t) = \int_0^t V(s)\, ds.$$ 
It is not difficult to check that any $V\in C_{[0,1]}$ is a  point of continuity for $\hat A$. Thus, since $\nu(\cE)$ is continuous, we deduce from 
\eqref{convcentr} that as $L\to \infty$ and with $S$ sampled from $\prob{. \mid A_{\tilde \tau}= L, S_{\tilde \tau}=0}$
\be{eq:cvavecaire}
\Big( \frac{\tilde \tau}{L^{2/3}},\,  \hat{S},\,  \hat{A}(\hat S),\,  \hat A^{-1}\Big) \ \text{converges in distribution to} \ 
\big(R_\cE,\, \nu(\cE),\,  \hat A(\nu(\cE)),\,  a_{\nu_{\cE}}\big),
\ee
where the fourth coordinate in both sides of \eqref{eq:cvavecaire}  corresponds to the right-continuous pseudo-inverse of the third coordinate, i.e.,  for $u>0$, 
\be{gh}
 \hat{A}^{(-1)}(u):= \inf\big\{t>0 : 
   \hat{A}(\hat S)(t) \ge u\big\},
\ee
and $a_{\nu_\cE}$ is the pseudo-inverse of $\hat A(\nu(\cE))$ defined as in \eqref{defae}.

\medskip

At this stage, by  recalling \eqref{defAA} and \eqref{defae}, we note that  for every $t\in [0,1]$ we have  $\hat A(\hat S)(t)=\frac{1}{\tilde \tau^{3/2}} A_{\floor{t\tilde \tau}}$ and $\hat A(\nu(\cE))(t)=\tfrac{1}{R_\cE^{3/2}} \, A_{tR_{\cE}}(\cE)$.
We recall also \eqref{defchi} and  with $\mathfrak{R}_L:= \frac{\tilde \tau}{L^{2/3}}$ 
we have 
\begin{equation}
  \unsur{\tilde \tau} \, \chi_{sL} = \inf\ens{t > 0: 
    A_{\floor{t\tilde \tau}} \ge sL} = \inf\Big\{t>0 : 
    \hat{A}(\hat S)(t) \ge \tfrac{s}{ \mathfrak{R}_L^{3/2} }\Big\}=\hat{A}^{(-1)}\Big(\tfrac{s}{\mathfrak{R}_L^{3/2}}\Big ),
\end{equation}
so that for $s\in [0,1]$, 
\be{idcst}
\frac{1}{L^{1/3}}\, S_{\chi_{sL}} = \sqrt{\frac{\tilde \tau}{L^{2/3}}} \ \hat{S}\Big(\hat{A}^{(-1)}\Big(\tfrac{s}{\mathfrak{R}_L^{3/2}}\Big)\Big).
\ee
Since the limiting processes in \eqref{eq:cvavecaire} are
continuous,  we  compose  $\hat{S}$ with the
inverse process $\hat{A}^{-1}$ and use \cite[Lemma 2.3]{Kurtz91}  to obtain that as $L\to \infty$ and with $S$ sampled from $\prob{. \mid A_{\tilde \tau}= L, S_{\tilde \tau}=0}$ 
$$\Big(\frac{1}{L^{1/3}}\,  S_{\chi_{sL}};\, 0\le s\le 1\Big)
\quad \text{converges in distribution to}\quad 
\Big( \nu(\cE)\Big(a_{\nu(\cE)}\big(\tfrac{s}{ R_\cE^{3/2}}\big)\Big);\, 0\leq s\leq 1\Big)$$
for the Skorokhod distance. We conclude by observing that  $\nu(\cE)(a_{\nu_{\cE}}(s
R_\cE^{-3/2}))= \cE_{a_{\cE}(s)}$ for $s\in [0,1]$.

\section{Proof of Theorem \ref{thm:ab}}\label{excnorm}
We will not display here all the details of the proof of Theorem \ref{thm:ab} since it is very close in spirit to that of Theorem \ref{thm:a}. 
We will rather insist on the differences between both proofs. 

We recall \eqref{defbe}
and, given
$t=(t_1,\ldots,t_d)$ with $0< t_1< \cdots <t_d\le 1$, we denote by 
$k(y_1,\ldots,y_d)$ the density of the random vector
$\mathbf B_t=(B_{t_1},\ldots,B_{t_d})$, where $(B_s)_{s\in [0,\infty)}$ is a standard Brownian motion.

The key point consists in proving the counterpart of Proposition \ref{pro:lltbridge} in the present framework, i.e., 
 \begin{proposition}\label{pro:llttausupncomplet} With $r= d + \frac32$  and $[h_1,h_2]\subset (0,\infty)$
\begin{align}\label{propexc} 
\lim_{N\to +\infty}  \sup_{a\in [h_1,h_2] N^{3/2}}  \sup_{(x,y)\in \N^d\times  \Z^{d}} \Big |
N^r \mathbb{P}_0(A_N=a,\, 
  &\snt=x, \bar{\mathbf S}_{\floor{Nt}}=y \mid \tilde \tau = N,\, S_N=0) \\
 \nonumber & - \frac{1}{\sigma^{r}} \phi_t\etp{\frac{a}{\sigma
    N^{3/2}}, \frac{x}{\sigma N^{1/2}}} k\etp{\frac{y}{\sigma N^{1/2}}}
\Big |=0.
\end{align}
\end{proposition}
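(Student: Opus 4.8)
The plan is to reproduce, with an enlarged state space, the argument of Section~\ref{subsec:a:lltexc}. Concretely I would apply the multidimensional Davis--McDonald method (Proposition~\ref{thm:cvloiplusapproxregegallclt}) in dimension $2d+1$: the coordinates are the area, the $d$ positions $S_{\floor{Nt_1}},\ldots,S_{\floor{Nt_d}}$, and the $d$ positions $\bar S_{\floor{Nt_1}},\ldots,\bar S_{\floor{Nt_d}}$, with scaling operator $\Lambda_N(a,x,y)=(N^{-3/2}a,N^{-1/2}x,N^{-1/2}y)$, so that the relevant exponent is $r=d+\tfrac32$. As in Section~\ref{subsec:a:lltexc} it is enough to treat $d=2$, and one may take $\sigma=1$. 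The target law is that of $\bigl(A_1(e),e_{t_1},\ldots,e_{t_d},B_{t_1},\ldots,B_{t_d}\bigr)$ with $e$ the standard Brownian excursion and $B$ an \emph{independent} Brownian motion; its density is the product $\phi_t(a,x)\,k(y)$, which is Lipschitz and bounded, being a product of two such functions (Lipschitz continuity of $\phi_t$ was already used in Section~\ref{subsec:a:lltexc}).

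The first new ingredient is the convergence in distribution, under $\prob{\cdot\mid\tilde\tau=N,S_N=0}$, of $\Lambda_N(A_N,\snt,\bar{\mathbf S}_{\floor{Nt}})$ towards that vector. The $(A_N,\snt)$-marginal is \cite[Corollary~2.5]{CarCha13}; the substance is that $\bar{\mathbf S}_{\floor{Nt}}/\sqrt N$ converges to a Brownian motion \emph{independent of $e$}, even under a conditioning that concerns $S$ only. I would group the increments into consecutive pairs and set $U_k=X_{2k-1}+X_{2k}$, $W_k=X_{2k-1}-X_{2k}$; then $(S_{2m})_m$ is a functional of $(U_k)_k$ alone, $(\bar S_{2m})_m$ of $(W_k)_k$ alone, and by exchangeability of $(X_{2k-1},X_{2k})$ one has $\mathbb{E}[X_{2k-1}\mid U_k]=\mathbb{E}[X_{2k}\mid U_k]=U_k/2$, hence the crucial identity $\mathbb{E}[W_k\mid U_k]=0$. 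Thus, conditionally on $(U_k)_k$ (i.e.\ on the even-time skeleton of $S$), $\bar S$ is a sum of independent, conditionally centred variables whose conditional variances obey a law of large numbers, so a Lindeberg conditional CLT gives a Gaussian limit that does not depend on $(U_k)_k$. It then remains to check that the excursion conditioning $\{\tilde\tau=N,S_N=0\}$, once $(U_k)_k$ is fixed, only forces the constraints $W_k>-2S_{2k-2}$ for each $k$; along a typical excursion $S_{2k-2}\gg1$ except for a vanishing fraction of indices near the two ends, so this does not perturb the diffusive sum $\sum_k W_k/\sqrt N$, and asymptotic independence survives. Tightness of the enlarged vector is immediate from \cite{CarCha13} together with a crude second-moment bound on $\bar S_{\floor{Nt}}/\sqrt N$ under the conditioning, obtained exactly as in Section~\ref{ub}.

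The second new ingredient is a joint Gaussian local limit theorem, with discrete-gradient control, for the \emph{unconstrained} triple $(A_n,S_n,\bar S_n)$: this generalises \eqref{eq:smoothnessstandartllt22} and Proposition~\ref{lltalgarea}, is proved by the same characteristic-function method (where the extra smoothness granted by \eqref{defhyprw} is used, cf.\ the footnote in the introduction), and its limiting density factorises because the normalised covariances of $\bar S_n$ with $A_n$ and with $S_n$ are $O(1/n)$. With this in hand, the building blocks $\alpha_\eta$ of \eqref{eq:defalphaeta} are replaced by analogues that also record the value of $\bar S$ at the window endpoints, $p_\eta$ by the corresponding product as in \eqref{defpeta}, and one verifies (i)--(iv) of Proposition~\ref{thm:cvloiplusapproxregegallclt} line by line as in Section~\ref{subsec:a:lltexc}: the events $\{S\in\cW_{\cdot,\cdot}\}$ still constrain $S$ only, so the $\tilde\tau$-tail estimates \eqref{eq:simtau}--\eqref{eq:e:lltboundxsmall} are reused verbatim for the $S$-part and the $\bar S$-coordinate rides along at the cost of a bounded extra factor; the $a$- and $x_i$-gradients of $p_\eta$ are bounded via \eqref{eq:smoothnessstandartllt22}, and the $y_i$-gradients via the new triple LCLT. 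For (iv) one takes $\cA(\eta)=\{(a,x,y)\colon x_i\ge\eta,\ \|y\|\le1/\eta,\ a\in[h_1,h_2]\}$: $\phi_t\,k$ vanishes when some $x_i=0$ and its Gaussian factor $k$ has exponential tails, so $\sup_{(a,x,y)\notin\cA(\eta)}\phi_t(a,x)k(y)=o(1)$ as $\eta\to0$, and the inclusion condition on balls is clear. The restriction $a\in[h_1,h_2]N^{3/2}$ is only what is needed downstream (it feeds \eqref{pru} in the tightness proof) and mildly streamlines the treatment of (iii).

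I expect the main obstacle to be the asymptotic \emph{decoupling} of $\bar S$ from the excursion of $S$ --- that is, the convergence in distribution towards the product law $\phi_t\,k$ together with the joint LCLT with gradient estimates for $(A_n,S_n,\bar S_n)$; quantifying ``how much the excursion conditioning on $S$ bites on the $W$-walk near the endpoints of the excursion'' is the delicate point. Once these two ingredients are secured, the remainder is a routine, if lengthy, transcription of Section~\ref{subsec:a:lltexc}.
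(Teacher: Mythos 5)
Your overall architecture (Davis--McDonald in dimension $2d+1$, a joint LCLT with gradient bounds for $(A_n,S_n,\bar S_n)$, a product limit density $\phi_t\,k$, and a line-by-line transcription of Section~\ref{subsec:a:lltexc} for (i)--(iv)) is the right frame, but you diverge from the paper exactly at the point you yourself flag as delicate, and that is where your argument has a real gap. The paper explicitly refuses to prove the convergence in distribution of $\bar{\mathbf S}_{\floor{Nt}}/\sqrt N$ under the excursion conditioning $\{\tilde\tau=N,S_N=0\}$ directly: it first proves the \emph{meander} version (Proposition~\ref{pro:llttausupncomplett}, conditioning on $\{\tilde\tau>N\}$ only), where the decoupling of $\bar S$ from $S$ is obtained by Bolthausen's random time-shift identity (Lemma~\ref{lem:astucebolthausen}): conditioning on positivity is rewritten \emph{exactly} as the unconditioned law of $(S_{\cdot+T_n}-S_{T_n},\bar S_{\cdot+T_n}-\bar S_{T_n})$ for a stopping time $T_n$ measurable with respect to $S$ alone, so the joint convergence to $(\cM^+,B)$ with $B$ independent of $\cM^+$ follows from Donsker plus almost-everywhere continuity of the shift functional, with no endpoint estimates whatsoever. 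The excursion statement \eqref{propexc} is then \emph{deduced} from the meander one by Markov's property at time $t_dN$, time reversal for the window $[t_dN,N]$, a Riemann sum, and the absolute-continuity identity \eqref{acr} relating excursion, Bessel bridge and meander densities. Your direct route replaces this exact identity with an asymptotic decoupling via the pairing $U_k=X_{2k-1}+X_{2k}$, $W_k=X_{2k-1}-X_{2k}$.

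The gap in that route is the conditional centering of the $W_k$'s. Once $(U_k)_k$ is fixed, the excursion event imposes at each odd time the constraint $W_k>-2S_{2k-2}-U_k$ (not $W_k>-2S_{2k-2}$ as you wrote), and this truncation shifts the conditional mean of $W_k$. In the bulk of the excursion $S_{2k-2}\asymp\sqrt N$ and the shift is negligible, but near the two endpoints $S_{2k-2}$ is of order $\sqrt k$ (or smaller on bad events), and you must show that the accumulated drift $\sum_k\mathbb E[W_k\mid U,\ \text{constraints}]$ is $o(\sqrt N)$. That requires quantitative lower-envelope estimates for the path near $0$ and $N$ \emph{under the conditioned law} $\prob{\cdot\mid\tilde\tau=N,S_N=0}$ (e.g.\ a uniform bound on $\prob{S_{2k}\le\delta\sqrt k\mid\tilde\tau=N,S_N=0}$), together with a conditional Lindeberg verification and concentration of the conditional variance; none of these is supplied, and they are precisely the estimates the paper's time-shift/meander route is designed to avoid. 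Two smaller points: your choice of $\cA(\eta)$ cannot include the constraint $a\in[h_1,h_2]$, since assumption (iv) would then require $\phi_t\,k$ to be uniformly small for $a\notin[h_1,h_2]$, which is false — keep only $x_i\ge\eta$ and $\|y\|\le1/\eta$ and handle the restriction on $a$ downstream as the paper's remark does; and note that $A_N$ itself is not a function of the $U$'s alone ($A_N$ differs from such a function by $\tfrac12\bar S_N$), which is harmless at the $N^{3/2}$ scale but is one more cross-term your decoupling must absorb.
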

\begin{remark}
Note that contrary to what we wrote  in Proposition \ref{pro:lltbridge}, the supremum in \eqref{propexc} is restricted to $a\in [h_1,h_2] N^{3/2}$. Thanks to Lemma \ref{eq:asympataustau}, this  is sufficient to complete the proof of Theorem \ref{thm:ab}. It would have been  sufficient to take the same restriction in Proposition \ref{pro:lltbridge} to prove Theorem  \ref{thm:a}.
\end{remark}
Proving that with $S$ sampled under $\PP_0(\cdot |  \tilde \tau = N,\, S_N=0)$ and as $N\to \infty$,  the random vector $\bar {\mathbf S}_{\lfloor t N \rfloor}/\sqrt{N}$ converges in distribution towards 
$\mathbf B_t$ is difficult. For this reason we can not apply Proposition \ref{thm:cvloiplusapproxregegallclt} directly to prove \eqref{propexc}. To overstep this difficulty, we first state a 
variant of Proposition \ref{propexc} involving the Brownian meander instead of the excursion. To that purpose, for $v\in [0,1]$, we let 
$\cM^{v,+}=(\cM^{v,+}_s, 0\le s\le v)$ be the Brownian meander on $[0,v]$ (see \cite{Imhof84} for a definition of $\cM^{v,+}$ and note that we will omit the $v$ dependency when $v=1$) . For
$t=(t_1,\ldots,t_d)$ with $0<t_1<\dots<t_d\leq v$ we denote by  $\phi_{t}^{v,+}(a,x),a\in \R,x\in\R^d$ the density of 
$$ \Big(A_v(\cM^{v,+})= \int_0^v \cM^{v,+}(s)\, ds,\, 
 {\boldsymbol \cM}^{v,+}_t:=\big(\cM^{v,+}_{t_1},\ldots,\cM^{v,+}_{t_d}\big)\Big).$$
  
\begin{proposition}\label{pro:llttausupncomplett} 
With $r= d + \frac32$
\begin{align}\label{propexc2}
\lim_{N\to +\infty} \sup_{(a,x,y)\in  \N^{d+1}\times  \Z^{d}} \Big |N^r\,  \PP_0(A_N=a,\, 
 & \snt=x,\,  \bar{\mathbf S}_{\floor{Nt}}=y \mid \tilde \tau > N) \\
\nonumber &- \frac{1}{\sigma^{r}} \phi_{t}^+\etp{\frac{a}{\sigma
    N^{3/2}}, \frac{x}{\sigma N^{1/2}}}  k\etp{\frac{y}{\sigma N^{1/2}}}\Big |=0.
\end{align}
\end{proposition}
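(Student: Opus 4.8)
The plan is to apply Proposition~\ref{thm:cvloiplusapproxregegallclt} to the $\Z^{2d+1}$-valued vector $\mathbf Y_N:=\big(A_N,\snt,\bar{\mathbf S}_{\floor{Nt}}\big)$ sampled under $\bbP_0(\cdot\mid\tilde\tau>N)$, with the scaling operator $\Lambda_N$ dividing the area coordinate by $N^{3/2}$ and each of the $2d$ position coordinates by $N^{1/2}$; hence $r=\tfrac32+\tfrac d2+\tfrac d2=d+\tfrac32$, as in the statement. As in the proof of Proposition~\ref{pro:lltbridge} we restrict to $d=2$, and after rescaling we assume $\sigma=1$. The candidate limit is $\big(A_1(\cM^+),\boldsymbol{\cM}^+_t,\mathbf B_t\big)$ with $\mathbf B_t$ \emph{independent} of $\cM^+$, so that its density is the product $\phi^+_t(a,x)\,k(y)$; this density is Lipschitz continuous because $\phi^+_t$ is Lipschitz and bounded (the meander analogue of \iflong Lemma~A.1\else\cite[Lemma~A.1]{CarPet17bext}\fi) and $k$ is a smooth bounded Gaussian density.

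The crucial input is the convergence in distribution $\Lambda_N(\mathbf Y_N)\Rightarrow\big(A_1(\cM^+),\boldsymbol{\cM}^+_t,\mathbf B_t\big)$ with this independence. The convergence of $\big(\tfrac1{\sqrt N}S_{\floor{sN}}\big)_{s\in[0,1]}$ to the Brownian meander under $\bbP_0(\cdot\mid\tilde\tau>N)$ is Bolthausen's theorem \cite{B76}, already invoked in Section~\ref{subsec:tightness}, and it yields the joint convergence of the $A_1(\cM^+)$ and $\boldsymbol{\cM}^+_t$ coordinates. It remains to prove that $\tfrac1{\sqrt N}\bar{\mathbf S}_{\floor{Nt}}$ converges jointly with the meander coordinates to $\mathbf B_t$, independently of them. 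Grouping increments two by two, $S_{2m}=\sum_{j\le m}(X_{2j-1}+X_{2j})$ and $\bar S_{2m}=\sum_{j\le m}(X_{2j-1}-X_{2j})$ are partial sums of the i.i.d.\ planar sequence $(U_j,W_j):=(X_{2j-1}+X_{2j},X_{2j-1}-X_{2j})$, whose coordinates are \emph{uncorrelated}, since $\bbE[U_jW_j]=\bbE[X_{2j-1}^2]-\bbE[X_{2j}^2]=0$, and each of variance $2\sigma^2$; therefore the diffusive limit of $(S,\bar S)$ is a planar Brownian motion with diagonal covariance, i.e.\ with independent coordinates. The conditioning event $\{\tilde\tau>N\}$ concerns $S$ only, and as the odd partial sums $S_{2j-1}$ differ from their even neighbours by quantities of order $1$, it differs from $\{S_2>0,\dots,S_{2\floor{N/2}}>0\}$ only by a boundary effect that washes out under diffusive scaling; a planar version of Bolthausen's argument — conditioning the first coordinate of a random walk with uncorrelated coordinates to stay positive — then produces the joint convergence to $(\cM^+,\text{independent }B)$. \textbf{This decoupling step is the main obstacle.}

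Granting the convergence in distribution, we build the approximating family $p_\eta$ in the spirit of \eqref{eq:defalphaeta}--\eqref{defpeta}. We relax the positivity constraint on windows of length $M=\floor{\eta^3N}$ adjacent to each time $\floor{Nt_i}$ and to time $N$; we additionally record the values of $\bar S$ at the junction points (on each segment the increments of $\bar S$ form a centre-of-mass walk whose sign pattern is fixed by the parity of its left endpoint); and we normalise by $\bbP_0(\tilde\tau>N)$, for which $c_0N^{-1/2}\le\bbP_0(\tilde\tau>N)\le C_1N^{-1/2}$ (the upper bound is \eqref{eq:simtau}, the matching lower bound being classical). The last segment is now a meander-type piece with no terminal constraint, rather than a pinned excursion, and the set $\cA(\eta)$ constrains only the $d$ meander coordinates $x$, leaving the $\bar S$-coordinates $y$ free.

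The verification of (i)--(iv) then follows Sections~\ref{ub} and~\ref{subsec:a:lltexc}. For (i), the discrete gradients of $p_\eta$ in $a$, in each $x_i$ and in each $y_i$ are controlled by differentiating the free first or last window of the relevant segment, using a joint smoothness LCLT for $(S_n,\bar S_n,A_n)$ extending \eqref{eq:smoothnessstandartllt22} and Proposition~\ref{lltalgarea} to the planar walk $(S,\bar S)$ together with its area; this is where the extra moments \eqref{defhyprw} re-enter. For (ii), one uses the Markov decomposition as in \eqref{eq:markovdecompositionexcursion} and the same window-relaxation estimates (Donsker's invariance principle on the windows, together with \eqref{eq:crudeboudsnxtau}, \eqref{eq:tsnxpetit} and \eqref{eq:e:lltboundxsmall}), the extra $\bar S$-endpoint data only inserting bounded Gaussian LCLT factors that do not change the orders of magnitude. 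For (iii), when some $x_i\le\eta\sqrt N$ one bounds the probability using \eqref{eq:e:lltboundxsmall} and its counterpart \eqref{eq:tsnxpetit} for walks started at a small positive point. For (iv), $\phi^+_t(a,x)k(y)$ vanishes whenever some $x_i=0$, and since $\cM^+_{t_i}>0$ almost surely one has $\bbP(\cM^+_{t_i}\le\eta)=o(\eta)$, exactly as in the excursion case. Proposition~\ref{thm:cvloiplusapproxregegallclt} then yields \eqref{propexc2}.
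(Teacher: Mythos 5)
You have correctly identified the overall strategy --- apply Proposition~\ref{thm:cvloiplusapproxregegallclt} with $r=d+\tfrac32$, isolate the joint convergence in distribution of $\big(A_N/N^{3/2},\mathbf S_{\floor{Nt}}/\sqrt N,\bar{\mathbf S}_{\floor{Nt}}/\sqrt N\big)$ under $\PP_0(\cdot\mid\tilde\tau>N)$ as the genuinely new ingredient, and observe that the claimed independence of $\mathbf B_t$ from the meander coordinates originates in the vanishing cross-covariance of the increments of $S$ and $\bar S$. This matches the paper's route, and your treatment of (i)--(iv) as mutatis mutandis from Proposition~\ref{pro:lltbridge} is the same shortcut the paper takes.

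However, the step you flag as \textbf{the main obstacle} is exactly where the argument must do real work, and your sketch does not close it. Uncorrelatedness of the pairs $(X_{2j-1}+X_{2j},X_{2j-1}-X_{2j})$ gives asymptotic independence of $(S,\bar S)$ under the \emph{unconditional} law, but the conditioning event $\{\tilde\tau>N\}$ has probability $\sim N^{-1/2}$, and a ``boundary effect washes out under diffusive scaling'' claim is not a proof that the conditioning decouples $\bar S$ from $S$. The paper resolves this with Lemma~\ref{lem:astucebolthausen}: letting $T_n=\inf\{k:S_{k+i}\ge S_k,\,1\le i\le n\}$, one has the \emph{exact} (finite-$n$, not asymptotic) identity
\begin{align*}
\PP_0\big(S_k\le a_k,\ \bar S_k\le \bar a_k,\ 1\le k\le n \mid S_k\ge 0,\ 1\le k\le n\big)
= \PP_0\big(S_{k+T_n}-S_{T_n}\le a_k,\ \bar S_{k+T_n}-\bar S_{T_n}\le \bar a_k,\ 1\le k\le n\big),
\end{align*}
valid because $T_n$ is a function of $S$ alone, $\{T_n=j\}$ factors as $\cM_j^c\cap\{S_{j+k}\ge S_j,1\le k\le n\}$ with $\cM_j^c\in\cF_j$, and the shifted increments of \emph{both} $S$ and $\bar S$ are independent of $\cF_j$. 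This turns the conditioned convergence into an unconditional Donsker limit $(Q_n\circ\pi_s^{-1}\Rightarrow P^s\otimes P^s)$ composed with a $P^s\otimes P^s$-a.e.\ continuous shift operator $\Psi_s$, as in Bolthausen's original argument (Proposition~\ref{pro:lctlboltdecorr}). Without this identity or an equivalent precise statement, the central claim of the proposition --- weak convergence to $(A_1(\cM^+),\boldsymbol\cM^+_t,\mathbf B_t)$ with $\mathbf B_t$ independent of the meander --- remains unsupported. Note also that the paper's argument applies directly to $(S,\bar S)$ at all times; your pairwise-regrouping to even times is unnecessary and would introduce a parity bookkeeping issue (the shifted $\bar S$ acquires a sign $(-1)^{T_n}$) that is harmless for the symmetric Brownian limit but should at least be acknowledged.
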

The  proof of Proposition \ref{pro:llttausupncomplet} is displayed in Section \ref{secconvlaw} below.  With Proposition 
\ref{pro:llttausupncomplet} in hand, we can prove \eqref{propexc}. To that aim, we apply Markov's property at time $t_d N$ 
in the probability in \eqref{propexc}. Then, we use \eqref{propexc2} to estimate the probability associated to the time window 
$[0, t_d N]$ and a simplified version of \eqref{propexc2} (without $\bar S$ and with $d=1$) to estimate the probability associated to the time window 
$[t_d N, N]$. Then, it remains to perform a Riemann sum approximation and to note that there exists a $C>0$ such that 
for $a\geq 0$ and $x=(x_1,\dots, x_d)\in (0,\infty)^d$ we have 
\be{acr}
\phi_t(a,x)=\frac{C}{\sqrt{t_d (1-t_d)}}\int_0^a \phi_t^{t_d,+}(u,x)\,  \phi^{1-t_d,+}_{1-t_d}(a-u,x_d) \, du
\ee
to complete the proof of Proposition \ref{propexc}. Note that \eqref{acr} is obtained by using classical absolute continuity relationship between 
Brownian excursion, Bessel bridges of dimension $3$ and Brownian meander.
 At this stage,  the rest of the proof of Theorem \ref{thm:ab} is completely similar to that of Theorem \ref{thm:a} and therefore we do not repeat it here.
%
%

\subsection{Proof of Proposition~\ref{pro:llttausupncomplet}}\label{secconvlaw}
To prove the proposition we apply Proposition \ref{thm:cvloiplusapproxregegallclt} and follow
\emph{mutatis mutandis} the path taken to prove
Proposition~\ref{pro:lltbridge}. As mentioned above, the only additional difficulty consists
in proving the convergence in distribution required to apply Proposition \ref{thm:cvloiplusapproxregegallclt}. In this case it means  proving that 
when $S$ is sampled from $\prob{. \mid \tilde  \tau > N}$ and as $N\to \infty$, the random vector $(A_N/N^{3/2},\snt/\sqrt{N},
\bar{\mathbf S}_{\floor{tN}}/\sqrt{N})$  converges in distribution
to $(A_1(\cM^+),\boldsymbol \cM^+_t,\mathbf B_t)$. This will be the object of the rest of the present section.  

%

We shall follow closely the proof of \cite{B76} and therefore stick to
the notations of this paper.
Therefore, we let $Y_n(s)$ be the continuous process on
$[0,+\infty)$ for which $Y_n(\frac{k}{n})= \frac{S_k}{\sigma
  n^\undemi}$ and which is linearly interpolated elsewhere:
\begin{equation*}
  Y_n(s) = \unsur{\sigma
  n^\undemi} \etp{S_{\floor{ns}} + (ns -\floor{ns})\,  X_{1+\floor{ns}}},
  \quad  \quad (s\ge 0).
\end{equation*}
We recall \eqref{defStiS2} and similarly  we let $\bar{Y}_n(s)$ be the linear interpolation
process associated to $(\bar{S}_k)_{k\ge 0}$.
Donsker's Theorem states that $(Y_n(s), \bar Y_N(s),0\le t\le 1)$ converges in
distribution on $(C[0,1]\times C[0,1],\rho)$ to two independent  Brownian motions, where $C[0,1]$ is
the space of continuous functions and $\rho$ the uniform metric.
\smallskip

At this stage, we set $\cC^+=\ens{f \in C[0,1]: f(s)\ge 0, \forall s\in\etc{0,1}}$ and the  convergence in law that we are looking for will be a straightforward consequence of the following proposition.
\begin{proposition}\label{pro:lctlboltdecorr}
  Let $B$ be a standard Brownian motion independent of the
  Brownian meander $\cM^+$. {\color{black}Then, as $n\to \infty$, with $S$ sampled under  $\bbP_0(\cdot \, |\,  Y_n \in
        \cC^+)$ the}
process
  \begin{equation*}
    \etp{Y_n(s),\bar{Y}_n(s); 0\le s\le 1}\ \text{converges in distribution to}\  (\cM^+_s, B_s; 0\le s\le 1).
  \end{equation*}
\end{proposition}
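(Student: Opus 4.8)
The plan is to prove Proposition~\ref{pro:lctlboltdecorr} by a joint conditioned invariance principle, following Bolthausen \cite{B76} but carrying along the auxiliary walk $\bar S$. First I would recall that $\bar S_n = \sum_{i\le n} (-1)^{i+1} X_i$ is itself a centered random walk with i.i.d.\ increments of variance $\sigma^2$ (the increments $(-1)^{i+1}X_i$ have the same law up to sign as $X_i$, hence the same even moments), so by Donsker's theorem the \emph{unconditioned} pair $(Y_n,\bar Y_n)$ converges in $(C[0,1]^2,\rho)$ to a two-dimensional process. The crucial observation is that this limit is a pair of \emph{independent} Brownian motions: the covariance $\frac1n\,\bE[S_{\floor{ns}}\bar S_{\floor{nt}}] = \frac{\sigma^2}{n}\sum_{i\le \floor{ns}\wedge\floor{nt}} (-1)^{i+1}$ is bounded by $\sigma^2/n\to 0$, so the finite-dimensional cross-covariances vanish in the limit; combined with tightness of each coordinate this gives joint weak convergence to $(B^{(1)},B^{(2)})$ with $B^{(1)}\perp B^{(2)}$. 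This is the place where the hypothesis that only even moments matter is used, and it is morally the ``decorrelation'' in the proposition's name.

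Next I would condition. Bolthausen's argument shows that, under $\bbP_0(\cdot\mid \tilde\tau>n)$ (equivalently $\bbP_0(\cdot\mid Y_n\in\cC^+)$ up to the harmless linear-interpolation discrepancy, which is $O(n^{-1/2})$ uniformly and vanishes), the process $Y_n$ converges in distribution to the Brownian meander $\cM^+$. The mechanism is an absolute-continuity / entrance-law representation: decompose the path at a small initial time $\epsilon n$, use the local limit theorem and the bound $\prob{\tilde\tau>n}\sim C_1 n^{-1/2}$ (recall \eqref{eq:simtau}) to show that the law of $Y_n$ restricted to $[\epsilon,1]$, conditioned on positivity, is close to that of a Brownian motion started from a vanishingly small positive height and conditioned to stay positive, and then let $\epsilon\to 0$ to recover the meander. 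I would run exactly this decomposition but keep track of $\bar Y_n$ simultaneously. The point is that the conditioning event $\{Y_n\in\cC^+\}$ involves only the $S$-coordinate; on the complementary $\sigma$-field generated (asymptotically) by $\bar S$, the conditioning has no effect in the limit precisely because of the asymptotic independence established in the first step. Concretely: for finite-dimensional test times $0<t_1<\cdots<t_d\le 1$ and bounded continuous $F,G$, I would show
\begin{equation*}
\bE_0\big[F(Y_n(t_\bullet))\,G(\bar Y_n(t_\bullet))\,\big|\, Y_n\in\cC^+\big] \longrightarrow \bE[F(\cM^+_{t_\bullet})]\,\bE[G(B_{t_\bullet})],
\end{equation*}
by inserting the same initial-time decomposition, using that conditionally on $(S_k)_{k\le \epsilon n}$ the increments of $\bar S$ after time $\epsilon n$ are (up to a deterministic sign flip that does not change the law of the increment process) independent of everything used to handle the $S$-part, and applying Donsker plus the LCLT estimates to the $\bar S$-block. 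The factorization of the limit is then forced by the product structure one gets after passing $\epsilon\to 0$.

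Finally, tightness of the joint law $(Y_n,\bar Y_n)$ under the conditioning: tightness of $Y_n$ is exactly what Bolthausen proves (and is invoked elsewhere in this paper, cf.\ Lemma~\ref{lem:tight} and \eqref{convmean}); tightness of $\bar Y_n$ under $\bbP_0(\cdot\mid \tilde\tau>n)$ follows from tightness of the \emph{unconditioned} $\bar Y_n$ by the standard comparison $\bbP_0(\,\cdot\mid \tilde\tau>n)\le \bbP_0(\cdot)/\prob{\tilde\tau>n} \le C\sqrt n\,\bbP_0(\cdot)$ applied to the oscillation events $\{\Gamma_{[0,1]}(\bar Y_n,\delta)>\varepsilon\}$, whose unconditioned probability is $o(n^{-1/2})$ as $\delta\to0$ by the moment bounds on the increments (a Billingsley-type modulus-of-continuity estimate). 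Tightness of the pair on $C[0,1]^2$ is then automatic. Together with the finite-dimensional convergence above this yields the claimed weak convergence to $(\cM^+,B)$ with $B$ independent of $\cM^+$. The main obstacle, I expect, is the conditioning step: one must argue carefully that conditioning on $\{Y_n\in\cC^+\}$ does not distort the $\bar S$-coordinate in the limit despite the two coordinates being built from the \emph{same} increments $X_i$ --- the resolution is that the correlation is $O(1/n)$ and survives neither the Donsker scaling nor the $n^{1/2}$ blow-up coming from $\prob{\tilde\tau>n}^{-1}$, but making this quantitative (so that it beats the $\sqrt n$ loss from the conditioning) is the delicate part and is where Bolthausen's initial-segment decomposition, rather than a crude union bound, is essential.
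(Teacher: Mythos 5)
Your proposal takes a genuinely different route from the paper's, and I think you have misremembered which technique is actually Bolthausen's. The paper's proof is built around the \emph{record-time shift} (Lemma~\ref{lem:astucebolthausen}): setting $T_n=\inf\{k: S_{k+i}\ge S_k,\ i=1,\dots,n\}$, one has the exact identity that the conditioned law of $(S_k,\bar S_k)_{k\le n}$ given $\{S_k\ge 0,\ k\le n\}$ equals the \emph{unconditioned} law of $(S_{k+T_n}-S_{T_n},\bar S_{k+T_n}-\bar S_{T_n})_{k\le n}$. The reason this works for the pair is that $\{T_n=j\}\in\cF_j$ while both shifted walks are measurable with respect to the post-$j$ increments, which are independent of $\cF_j$; in particular $\bar S$ comes along for the ride at no extra cost. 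This turns the conditioned problem into an unconditioned one, so the Donsker step, the tightness, and the identification of the limit as $(\cM^+,B)$ with $B\perp\cM^+$ are all obtained in one stroke by continuity of the shift map $\Psi_s$. The ``initial-segment / entrance-law'' decomposition you describe is a different classical strategy (closer in spirit to Iglehart or Caravenna--Chaumont than to Bolthausen), and the paper does not use it.

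Beyond the mislabeling, there is a concrete gap in the tightness step. You propose to control $\bar Y_n$ under the conditioning by the crude bound $\bbP_0(\,\cdot\mid\tilde\tau>n)\le C\sqrt n\,\bbP_0(\cdot)$ applied to oscillation events, which requires $\bbP_0\big(\Gamma_{[0,1]}(\bar Y_n,\delta)>\epsilon\big)=o(n^{-1/2})$ (as $\delta\to 0$, uniformly in large $n$). But the standard fourth-moment/Kolmogorov estimate gives $\bbP_0\big(\Gamma_{[0,1]}(\bar Y_n,\delta)>\epsilon\big)=O(\delta/\epsilon^4)$ \emph{uniformly in $n$}, with no decay in $n$ at all; after multiplying by $\sqrt n$ this diverges, so the bound cannot close. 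A union-bound/absolute-continuity approach to tightness is therefore doomed here, and this is precisely what the record-time identity circumvents (after the shift the process is unconditioned, and tightness is just Donsker's). Likewise, your finite-dimensional step is left as a sketch, and you explicitly flag the crucial estimate (showing that the $O(1/n)$ decorrelation beats the $\sqrt n$ loss from conditioning) as ``the delicate part''; the paper's Lemma~\ref{lem:astucebolthausen} replaces that delicate estimate with an exact distributional identity.
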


  \begin{proof}[of Proposition \ref{pro:lctlboltdecorr}]
The first step of the proof is the following Lemma. Let $T_n =
\inf\ens{ k : S_{k+i} \ge S_k, \text{ for } i=1, \ldots,n}$. Clearly
$T_n < +\infty$ a.s.

\begin{lemma}\label{lem:astucebolthausen}
  For each sequences of real numbers $a_1, \ldots, a_n, \bar{a}_1,
  \ldots, \bar{a}_n$ we have
\begin{align}
\PP_0\big( S_k \le a_k, \bar{S}_k \le \bar{a}_k, 1\le k\le n &| S_k
\ge 0, 1\le k\le n\big) \\
\nonumber &= \bbP_0(S_{k+T_n} -S_{T_n} \le a_k,
\bar{S}_{k+T_n} -\bar{S}_{T_n} \le \bar{a}_k, 1\le k\le n).
\end{align}
\end{lemma}
\begin{proof}
  If $\cM_j := \cup_{s=0}^{s-1} \ens{S_s \le S_r, \text{ for } s+1 \le r
    \le \min(j , s+n)}$ we have 
$$ \ens{ T_n = j} = \cM_j^c \cap \ens{S_{j+k}\ge S_j, 1\le k \le n}.$$
What is interesting here is that $\cM_j \in\cF_j=\sigma(X_k, k\le j)$
and that the random walks $(S_{k+j} -S_j, k\ge 0)$ and $(\bar{S}_{k+j}
-\bar{S}_j, k\ge 0)$ are independent from the $\sigma$-field
$\cF_j$. Therefore
\begin{align*}
   &\prob{S_{k+T_n} -S_{T_n} \le a_k,
  \bar{S}_{k+T_n} -\bar{S}_{T_n} \le \bar{a}_k, 1\le k\le n} \\
&=\sum_{j=0}^{+\infty}  \prob{S_{k+j} -S_{j} \le a_k,
  \bar{S}_{k+j} -\bar{S}_{j} \le \bar{a}_k, 1\le k\le n \mid T_n
  =j} \prob{T_n =j} \\
&= \sum_{j=0}^{+\infty}  \prob{S_{k+j} -S_{j} \le a_k,
  \bar{S}_{k+j} -\bar{S}_{j} \le \bar{a}_k, 1\le k\le n \mid \cM_j^c
  ,S_{j+k}\ge S_j, 1\le k \le n} \prob{T_n =j}\\
&= \sum_{j=0}^{+\infty}\prob{ S_k \le a_k, \bar{S}_k \le \bar{a}_k, 1\le k\le n \mid S_k
  \ge 0, 1\le k\le n}\prob{T_n =j} \\
&=\prob{ S_k \le a_k, \bar{S}_k \le \bar{a}_k, 1\le k\le n \mid S_k
  \ge 0, 1\le k\le n}.
\end{align*}
\end{proof}
For $s\in (0,+\infty]$ let $C^s$ be the set of continuous functions on
$[0,s]$ (or $[0,+\infty)$ for $s=+\infty$) and $\cB^s$ the smallest
$\sigma$-algebra such that the mappings  $f\in C^s \to f(t) \in \R$
are measurable for every $t\in [0,s]$.

Let $P^s$ be the measure of Brownian motion on $(C^s,\cB^s)$. Then
$T^s : C^s \to [0,+\infty]$ is the mapping with
$$ T^s(f) = \inf\ens{t : f(u) \ge f(t), \text{ for } t\le u\le t+1 \le
  s}, \quad (\inf \emptyset = +\infty),$$
and we set $T= T^\infty$, $P= P^\infty$ for simplicity. Let $u$ be the
function in $C^1$ which is everywhere equal to $-1$. Let $\Phi_s:C^s\to
C^1$ be the map
$$ \Phi_s(f)(t) =
\begin{cases}
  f(T^s(f) + t) & \text{ for } T^s(f) <+\infty,\\
u & \text{otherwise.}
\end{cases}
$$

Let $Q_n$ be the probability measure defined on $(C^\infty \times
C^\infty, \cB^\infty \otimes \cB^\infty)$ by the process $(Y_n(t),
\bar{Y}_n(t), t\ge 0)$. Let $\pi_s : C^\infty \times
C^\infty \to C^s \times C^s$ be the projection map (we have $P^s
\otimes P^s =[P\otimes P] \circ \pi_s^{-1} $). Let $Q_n\circ\pi_1^{-1}(. \mid
\cC^+)$ be the probability measure defined on $C^1 \times C^1$ by
\begin{equation*}
  Q_n\circ\pi_1^{-1}(A \mid
\cC^+) := \frac{ Q_n (\pi_1^{-1}(A \cap \cC^+ \times C^1))}{Q_n
  (\pi_1^{-1}( \cC^+ \times C^1))} = \prob{ (Y_n(t),\bar{Y}_n(t), 0\le
  t\le 1) \in A \mid Y_n\in \cC^+ }.
\end{equation*}
In Lemma~\ref{lem:astucebolthausen} we have showed that $
  Q_n\circ\pi_1^{-1}(. \mid
\cC^+) = Q_n\circ \Psi^{-1}$,
with $\Psi$ defined as
\begin{equation*}
  \Psi_s(f,g)(t) =
\begin{cases}
  (f(T^s(f) + t) , g(T^s(f) + t))& \text{ for } T^s(f) <+\infty,\\
(u,u) & \text{otherwise.}
\end{cases}
\end{equation*}

As stated before, we can prove as in Donsker's theorem that  $Q_n\circ \pi_s^{-1}$
converges weakly (as $n \to \infty$) towards
 $ P^s \otimes P^s$.
Since $\Phi_s$ is continuous $P^s$ a.e. on $(C^s ,\rho )$ see \cite[Lemma 2.5]{B76}, we infer that $\Psi_s$ is continuous $P^s\otimes P^s$ a.e. on $(C^s \times
C^s,\rho \times \rho)$. By the contraction property, this implies that as $n\to \infty$
\begin{equation*}
  Q_n\circ (\Psi_s\circ\pi_s)^{-1}\quad \text{converges in distribution to} \quad [P^s\otimes P^s]\circ \Psi_s^{-1}.
\end{equation*}
We can now show as in \cite{B76} that if $\mathfrak{A}$ is a continuity set,
i.e. $[P\otimes P]\circ \Psi^{-1} (\partial \, \mathfrak{A})=0$, then
\begin{equation*}
  \lim_{n\to +\infty} Q_n\circ\Psi^{-1}(\mathfrak{A}) = [P\otimes P]\circ \Psi^{-1}(\mathfrak{A}),
\end{equation*}
and since $P\circ \Phi^{-1}$ is the law of $\cM^+$, we have that $ [P\otimes
P]\circ\Psi^{-1}$ is the law of $(\cM^+,B)$ and this concludes our proof.
    
  \end{proof}
\iflong

\section{Proof of Theorem \ref{thm:b}}\label{prthb}
\newcommand{\ttau}{{\tilde{\tau}}}

We start by showing Lemma \ref{lem:e:ridcond} which yields that 
it is equivalent to prove Theorem \ref{thm:b} with $S$ sampled from ${\bf P}_{\beta,\mu}(\cdot | \tau+G_{\tau-1}=L, S_\tau=0)$ rather than with $S$ 
sampled from ${\bf P}_{\beta,\mu}(\cdot | \tau+G_{\tau-1}=L)$.

\begin{lemma}\label{lem:e:ridcond}
  For every bounded random variable of  type $Z=F(\valabs{S}_{i\wedge \tau -1},
i\ge 1)$ we have 
$$ {\bf E}_{\beta,\mu}(Z \mid \tau + G_\ttau=L,S_\tau=0)  = {\bf P}_{\beta,\mu}(Z \mid
  \tau + G_{\tau -1}=L).$$
\end{lemma}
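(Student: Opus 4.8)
The plan is to decompose the two conditioned expectations over the value of the stopping time $\tau$ and to exploit the lack‑of‑memory property of the increment law \eqref{defincre} at the single step at which the excursion terminates. Fix $n\ge 1$ and let $Z_n$ be the random variable obtained from $Z$ by freezing $\tau$ at the value $n$ in its defining formula; then $Z_n$ is $\cF_{n-1}$‑measurable (with $\cF_{n-1}:=\sigma(S_0,\dots,S_{n-1})$), since it is a function of $|S_0|,\dots,|S_{n-1}|$ only, and $Z=Z_n$ on $\{\tau=n\}$. Because $\{\tau\ge n\}\in\cF_{n-1}$, and because on $\{S_{n-1}\ne 0\}$ we have $\{S_n=0\}\subset\{S_{n-1}S_n\le 0\}$, one gets the set identities
$$\{\tau=n,\ S_\tau=0\}=\{\tau\ge n,\ S_{n-1}\ne 0,\ S_n=0\},\qquad \{\tau=n\}=\{\tau\ge n,\ S_{n-1}\ne 0,\ S_{n-1}S_n\le 0\}.$$
Recalling $G_{n-1}=\sum_{i=1}^{n-1}|S_i|$, the variable $U_n:=Z_n\,\un{\tau\ge n,\ S_{n-1}\ne 0,\ n+G_{n-1}=L}$ is $\cF_{n-1}$‑measurable, and these identities give $U_n\,\un{S_n=0}=Z\,\un{\tau=n,\ n+G_{n-1}=L,\ S_\tau=0}$ and $U_n\,\un{S_{n-1}S_n\le 0}=Z\,\un{\tau=n,\ n+G_{n-1}=L}$.

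The heart of the matter is a one‑line computation. Conditionally on $\cF_{n-1}$, on the event $\{S_{n-1}=v\}$ with $v\ne 0$, using that $X_n$ is independent of $\cF_{n-1}$ with the law \eqref{defincre} and that $\{S_{n-1}S_n\le 0\}=\{X_n\le -v\}$ if $v>0$ (resp. $\{X_n\ge -v\}$ if $v<0$),
$$\bP_{\beta,\mu}(S_n=0\mid\cF_{n-1})=\frac{1}{c_\beta}e^{-\frac\beta2|v|},\qquad \bP_{\beta,\mu}(S_{n-1}S_n\le 0\mid\cF_{n-1})=\sum_{j\ge 0}\frac{1}{c_\beta}e^{-\frac\beta2(|v|+j)}=\frac{1}{1-e^{-\beta/2}}\cdot\frac{1}{c_\beta}e^{-\frac\beta2|v|}.$$
The crucial observation is that the $v$‑dependent factor $\tfrac{1}{c_\beta}e^{-\frac\beta2|v|}$ is common to both, so that on $\{S_{n-1}\ne 0\}$
$$\bP_{\beta,\mu}(S_n=0\mid\cF_{n-1})=(1-e^{-\beta/2})\,\bP_{\beta,\mu}(S_{n-1}S_n\le 0\mid\cF_{n-1}).$$

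Putting these together, conditioning on $\cF_{n-1}$ (which $U_n$ belongs to) and using the two identities from the first paragraph,
\begin{align*}
\bE_{\beta,\mu}\big[Z\,\un{\tau=n,\ n+G_{n-1}=L,\ S_\tau=0}\big]
&=\bE_{\beta,\mu}\big[U_n\,\bP_{\beta,\mu}(S_n=0\mid\cF_{n-1})\big]\\
&=(1-e^{-\beta/2})\,\bE_{\beta,\mu}\big[U_n\,\bP_{\beta,\mu}(S_{n-1}S_n\le 0\mid\cF_{n-1})\big]\\
&=(1-e^{-\beta/2})\,\bE_{\beta,\mu}\big[Z\,\un{\tau=n,\ n+G_{n-1}=L}\big].
\end{align*}
Summing over $n\ge 1$ (legitimate since $\tau<\infty$ a.s. by recurrence of the centered finite‑variance walk) gives $\bE_{\beta,\mu}\big[Z\,\un{\tau+G_{\tau-1}=L,\ S_\tau=0}\big]=(1-e^{-\beta/2})\,\bE_{\beta,\mu}\big[Z\,\un{\tau+G_{\tau-1}=L}\big]$. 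Taking $Z\equiv 1$ shows the same proportionality between the two normalising probabilities (positive for the relevant $L$); dividing one display by the other, the common factor $1-e^{-\beta/2}$ cancels and the claimed identity of conditional expectations follows.

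The main obstacle is not analytic but organisational: one must get right the bookkeeping around the stopping time — in particular that $\{\tau\ge n\}\in\cF_{n-1}$ and that $Z$, restricted to $\{\tau=n\}$, is a function of $S_0,\dots,S_{n-1}$ only — and one must spot that the right objects to compare are $\bP_{\beta,\mu}(S_n=0\mid\cF_{n-1})$ and $\bP_{\beta,\mu}(S_{n-1}S_n\le 0\mid\cF_{n-1})$, whose ratio is the constant $1-e^{-\beta/2}$, rather than hope that the former is itself independent of $S_{n-1}$ (it is not). Once this is seen, the remaining steps are routine.
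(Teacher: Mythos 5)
Your proof is correct and follows essentially the same route as the paper's: both isolate the final step of the excursion and exploit the geometric tail of the Laplace increments to get ${\bf P}_{\beta,v}(S_1=0)=(1-e^{-\beta/2})\,{\bf P}_{\beta,v}(S_1\le 0)$ uniformly in $v\ne 0$, so the two restricted expectations are proportional with a constant that cancels after normalising. Your packaging via conditioning on $\cF_{n-1}$ is marginally cleaner than the paper's explicit sums over the excursion length, the time spent sticking at zero and the penultimate position $b$, but the underlying idea is identical.
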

 \begin{proof}
Remember the geometric nature of the increments : ${\bf P}_{\beta,\mu}(S_1 = k ) =
c_\beta^{-1} e^{- \frac{\beta}{2} \valabs{k}}$ so that there exists
$C>0$ such that
$$ \forall x \ge 1,\quad C\,  {\bf P}_{\beta,x}( S_1\le 0) = {\bf P}_{\beta, x}(S_1=0).$$
Therefore, by symmetry,
\begin{align*}
& {\bf E}_{\beta,\mu}(Z ; \ttau + G_\ttau=L,S_\ttau=0) \\ 
  &= \sum_N {\bf E}_{\beta,\mu}(Z ; \ttau + G_{\ttau-1}=L,S_\ttau=0,\ttau=N)\\
 & = 2 \sum_N \sum_{k=0}^{N-2}{\bf E}_{\beta,\mu}(Z;S_1=\cdots=S_k=0,S_{k+1}>0, \ldots, S_{N-1}>0,S_N=0, N+ A_{N-1}=L)\\
  &=2 \sum_N \sum_{k=0}^{N-2}\sum_{b\ge 1} {\bf E}_{\beta,\mu}(Z;S_1=\cdots=S_k=0,S_{k+1}>0, \ldots, S_{N-1}=b, N+ A_{N-1}=L) {\bf P}_{\beta,b}(\etp{S_1=0})\\
  &=2C \sum_N \sum_{k=0}^{N-2}\sum_{b\ge 1} {\bf E}_{\beta,\mu} (Z;S_1=\cdots=S_k=0,S_{k+1}>0, \ldots, S_{N-1}=b, N+ A_{N-1}=L) {\bf P}_{\beta,b} (S_1\le 0)\\
&  = C  {\bf E}_{\beta,\mu}(Z;\ttau + G_{\ttau-1}=L).
\end{align*}
\end{proof}
As for Theorem \ref{thm:ab}, the proof of Theorem \ref{thm:b} is very close to that of Theorem \ref{thm:a}. For this reason, we will not display 
the proof of theorem \ref{thm:b} in full details here, but we will show how to derive the counterpart of Proposition \ref{pro:llttausupncomplet} with $\tau$ instead of $\ttau$,
with the underlying probability measure ${\bf P}_{\beta,\mu}$ instead of the generic $\PP$ and with $G_N$ instead of $A_N$.
This is indeed  the key tool to obtain the convergence in finite dimensional distribution which is required to prove Theorem \ref{thm:b}.  The next steps : inverting the conditioning, establishing tightness,
  performing time change, are so similar to those taken in section
  \ref{sec:preuvea} that we feel free to omit them.

\begin{proposition}\label{pro:b:lltexcursion} 
 With $r= d + \frac32$  and $[h_1,h_2]\subset (0,\infty)$
\begin{align}\label{propexc3}   
\lim_{N\to +\infty}  \sup_{a\in [h_1,h_2] N^{3/2}}  \sup_{(x,y)\in \N^d\times  \Z^{d}} \Big |
N^r  {\bf P}_{\beta,\mu}(&G_N=a,\, 
  \snt=x, \bar{\mathbf S}_{\floor{Nt}}=y \mid  \tau = N,\, S_N=0) \\
 \nonumber & - \frac{1}{\sigma_\beta^{r}} \phi_t\etp{\frac{a}{\sigma_\beta
    N^{3/2}}, \frac{x}{\sigma_\beta N^{1/2}}} k\etp{\frac{y}{\sigma_\beta N^{1/2}}}
\Big |=0.
\end{align}
\end{proposition}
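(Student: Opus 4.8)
The plan is to reproduce, \emph{mutatis mutandis}, the proof of Proposition~\ref{pro:llttausupncomplet}, pointing out only the steps that need a new argument. To begin with, the increments $X_1$ of law $\bP_\beta$ in \eqref{defincre} are symmetric and have all exponential moments, hence satisfy \eqref{defhyprw} (in particular $\bE_\beta(X_1^3)=0$ and $\bE_\beta(X_1^4)<\infty$); the same holds for the increments $(-1)^{i+1}X_i$ of $\bar S$, which are again i.i.d.\ of law $\bP_\beta$. Consequently the whole toolbox of Section~\ref{ub} — the tail bounds \eqref{eq:simtau}, the Gaussian and algebraic-area LCLTs \eqref{eq:simtau2}, their reinforcements with a small positive starting point or end point, the gradient estimates \eqref{eq:smoothnessstandartllt}--\eqref{eq:smoothnessstandartllt22}, and the sharp LCLT of Proposition~\ref{lltalgarea} — is available verbatim for both $S$ and $\bar S$ under $\bP_{\beta,\mu}$, with $\sigma$ replaced by $\sigma_\beta$.

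Next, I would dispose of the differences between $\tau$ and $\tilde\tau$, between $G_N$ and $A_N$, and between the starting laws $\mu$ and $\delta_0$ by unfolding the first excursion. On $\{\tau=N,\,S_N=0\}$ the trajectory consists of an initial sojourn at the origin of some length $m\ge 0$ (with $m=0$ iff $S_0\ne 0$, and whose law is a mixture of $\delta_0$ and a geometric law not depending on $N$) followed by a one-signed excursion $(S_m,\dots,S_N)$ with $S_m\ne 0$, $S_{m+1},\dots,S_{N-1}$ of the sign of $S_m$ and $S_N=0$; by the symmetry of $\bP_\beta$ and of $\mu$, the positive and negative cases are mirror images, so one may restrict to positive excursions at the cost of a factor $2$ that cancels in the conditional probability, and on a positive excursion $G_N=A_N$. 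Summing over $m$ produces only a bounded prefactor and a shift of the observation times $\floor{Nt_i}$ by $m=O(1)$, which is negligible at the scales $\sqrt N$ and $N^{3/2}$; moreover the memorylessness of the geometric/Laplace law lets us treat the first step $S_m$ out of the origin exactly as in the i.i.d.\ excursion decomposition recalled before \eqref{deftauseq}, and $S$ and $\bar S$ both remain at $0$ during the sojourn, so this unfolding affects none of the gradient bounds. Thus, up to these cosmetic changes, proving \eqref{propexc3} amounts to proving \eqref{propexc} for a positive excursion of the Laplace walk, with the same limiting density $\phi_t(\cdot)\,k(\cdot)$, since $G=A$ there.

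The only genuinely new ingredient — exactly as in Section~\ref{excnorm} — is that Proposition~\ref{thm:cvloiplusapproxregegallclt} cannot be applied directly to the bridge conditioning, because the convergence of $\bar{\mathbf S}_{\floor{Nt}}/(\sigma_\beta\sqrt N)$ towards $\mathbf B_t$ is not accessible under it. So I would first establish the meander analogue: under $\bP_{\beta,\mu}(\cdot\mid\tau>N)$,
\begin{equation*}
\Big(\tfrac{G_N}{\sigma_\beta N^{3/2}},\,\tfrac{\snt}{\sigma_\beta\sqrt N},\,\tfrac{\bar{\mathbf S}_{\floor{Nt}}}{\sigma_\beta\sqrt N}\Big)\ \text{converges in distribution to}\ \big(A_1(\cM^+),\,{\boldsymbol\cM}^+_t,\,\mathbf B_t\big),
\end{equation*}
reducing again, by unfolding the initial sojourn, to a Laplace walk started just above $0$ and conditioned to stay positive, and then invoking Bolthausen's trick (Lemma~\ref{lem:astucebolthausen} and Proposition~\ref{pro:lctlboltdecorr}). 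Since $(X_i)_i$ and $((-1)^{i+1}X_i)_i$ are i.i.d.\ sequences of the \emph{same} symmetric law, the identity of Lemma~\ref{lem:astucebolthausen} holds verbatim (the parity of the stopping time $T_n$ being harmless because $-\bar S$ has the same law as $\bar S$), and the weak limit of $(Y_n,\bar Y_n)$ under this conditioning is $(\cM^+,B)$ with $B$ independent of $\cM^+$. Feeding this convergence — together with the Lipschitz continuity of the limiting density, which follows from that of $\phi_t^+$ (the analogue for the meander of Lemma~A.1) and the Gaussianity of $k$ — into Proposition~\ref{thm:cvloiplusapproxregegallclt}, and checking its hypotheses (i)--(iv) with the bounds of Section~\ref{ub} exactly as in Subsection~\ref{secconvlaw} (the restriction $a\in[h_1,h_2]N^{3/2}$ making (iv) even easier, since it removes the need to control small areas), yields the meander LCLT. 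Finally, applying the Markov property at time $t_dN$, using this meander LCLT on $[0,t_dN]$ and its one-dimensional version (without $\bar S$, $d=1$) on $[t_dN,N]$, performing a Riemann sum approximation and invoking the absolute-continuity identity \eqref{acr} relating Brownian excursion, Bessel bridges of dimension $3$ and the Brownian meander, produces \eqref{propexc3}.

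The main obstacle, as for Theorem~\ref{thm:ab}, lies in this last step — identifying the joint law of $(S,\bar S)$ under the positivity conditioning — but Bolthausen's trick, together with the fact that $\bar S$ is itself a Laplace walk, makes it go through; everything else is the lengthy but routine transfer of the estimates of Section~\ref{ub} to $\bP_{\beta,\mu}$ and to the function $p_\eta$ built from the unfolded first excursion.
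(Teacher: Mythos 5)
Your proposal is correct and follows essentially the same route as the paper's proof: the paper too unfolds the initial sojourn at the origin (the $D_{1,N}/D_{2,N}$ decomposition, with the sum over the sticking length controlled by dominated convergence thanks to $c_\beta>1$, and the memorylessness identity $\mu(b)=C_4\,\bP_\beta(S_1=b)$) to reduce the $\mu$-start $\tau/G_N$ statement to the $\delta_0$-start $\tilde\tau/A_N$ one, imports the joint convergence with $\bar S$ from Bolthausen's trick (Lemma~\ref{lem:astucebolthausen}, Proposition~\ref{pro:lctlboltdecorr}) established in Section~\ref{excnorm}, and then transfers the Section~\ref{ub} estimates (Lemmas~\ref{balt1}--\ref{balt2} and their analogues) to verify the hypotheses of Proposition~\ref{thm:cvloiplusapproxregegallclt}. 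The only, purely presentational, difference is that you re-derive the meander LCLT and pass to the bridge via the Markov decomposition at $t_dN$ and the identity \eqref{acr}, whereas the paper cites the $\delta_0$-start bridge LCLT (Proposition~\ref{pro:llttausupncomplet}) already obtained that way in Section~\ref{excnorm}.
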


%

  Our stategy of proof for Proposition \ref{pro:b:lltexcursion} is to apply
  Proposition~\ref{thm:cvloiplusapproxregegallclt}. Therefore, we
  shall first establish the necessary convergence in distribution in
  subsection
  \ref{subsec:b:cvdis} and then smoothness of the approximation in
  subsection \ref{subsec:b:smooth}.

  \subsection{Convergence in distribution for Proposition  \ref{pro:b:lltexcursion}} \label{subsec:b:cvdis}
    
We pick $t=(t_1,\dots,t_d)$ with $0<t_1<\dots<t_d\leq 1$ and we shall prove that when sampled under ${\bf P}_{\beta,\mu}(\cdot\,  |\, \tau=N, S_N=0)$ the vector
$\frac{1}{\sigma_\beta}\Big(\frac{G_N}{N^{3/2}}, \frac{\valabs{\mathbf{S}_{\floor{tN}}}}{N^{1/2}}, \frac{\bar {\mathbf{S}}_{\floor{tN}}}{N^{1/2}}\Big)$ 
converges in distribution to $(A(e),\mathbf{e}_{t},\mathbf B_{t})$ with $\mathbf e_t:=(e_{t_1},\dots,e_{t_d})$ and   with $B$ a standard Brownian motion and $e$ a standard Brownian excursion independent of $B$. For simplicity we will abuse notation and omit the $\frac{1}{\sigma_\beta}$ in fromt of 
$\Big(\frac{G_N}{N^{3/2}}, \frac{\valabs{\mathbf{S}_{\floor{tN}}}}{N^{1/2}}, \frac{\bar{\mathbf S}_{\floor{tN}}}{N^{1/2}}\Big)$.  Since we proved in \cite[Appendix 7.2]{CP15} that
  \begin{equation}\label{eq:b:asympatus}
    {\bf P}_{\beta,\mu}(\ttau=N,S_N=0) \sim C_2  {\bf P}_{\beta,\mu}(\tau=N,S_N=0) \sim C_3 N^{-3/2}\,,
  \end{equation}
the convergence in distribution will be established once we show that
there exists $C'_3>0$
such that for every bounded continuous function
$F$,

\begin{equation}
  \lim_{N\to +\infty}N^{3/2}\,   {\bf E}_{\beta,\mu}\bigg[F\bigg(\frac{G_N}{N^{3/2}},\frac{\valabs{\mathbf{S}_{\floor{tN}}}}{N^{1/2}}, \frac{\bar{\mathbf S}_{\floor{tN}}}{N^{1/2}}\bigg); \tau=N,S_N=0\bigg] = C'_3 \esp{F(A(e),\mathbf e_t,\mathbf B_t)}\,,
\end{equation}
since letting $F=1$ in the preceding equation shows that $C'_3=C_3$.
 
 When the random walk starts from $b\ge 1$ then $\ttau=\tau$ and $| S_{\floor{Ns}}|=S_{\floor{Ns}}$ for $s\in [0,1]$. If, moreover $S_\tau=0$ then $G_{\tau}=A_\tau$. Therefore
conditioning by the value of $S_0$ and taking into account symmetry,
we have 
\begin{equation}\label{decDD}
   {\bf E}_{\beta,\mu}\bigg[F\etp{\frac{G_N}{N^{3/2}},\frac{\valabs{\mathbf{S}_{\floor{tN}}}}{N^{1/2}},  \frac{\bar{\mathbf S}_{\floor{tN}}}{N^{1/2}}}; \tau=N,S_N=0\bigg] = D_{1,N} + D_{2,N}
\end{equation}
with
\begin{align*}
  D_{1,N} &= \mu(0)\,  {\bf E}_{\beta,0}\bigg[F\etp{\frac{G_N}{N^{3/2}},\frac{\valabs{\mathbf{S}_{\floor{tN}}}}{N^{1/2}},  \frac{\bar{\mathbf S}_{\floor{tN}}}{N^{1/2}}}; \tau=N,S_N=0\bigg]\,,\\
  D_{2,N} &= 2 \sum_{b\ge 1} \mu(b) \, {\bf E}_{\beta,b}\bigg[F\etp{\frac{A_N}{N^{3/2}},\frac{\mathbf{S}_{\floor{tN}}}{N^{1/2}},  \frac{\bar{\mathbf S}_{\floor{tN}}}{N^{1/2}}}; \ttau=N,S_N=0\bigg].
\end{align*}

To handle $D_{2,N}$ we condition on the value of $S_1$, use symmetry
and the fact that there exists $C_4>0$ such that $\mu(b) = C_4 \probbeta{S_1=b}$ (for $b\in \N$)  to write
\begin{align*}
  D_{2,N} &= 2 C_4 \sum_{b\ge 1} {\bf P}_{\beta}(S_1=b)\,  {\bf E}_{\beta,b}\bigg[F\etp{\frac{A_N}{N^{3/2}},\frac{\mathbf{S}_{\floor{tN}}}{N^{1/2}},  \frac{\bar{ \mathbf{S}}_{\floor{tN}}}{N^{1/2}}}; \ttau=N,S_N=0\bigg]\\\
    &= 2 C_4 {\bf E}_{\beta}\bigg[F\etp{\frac{A_{N+1}}{N^{3/2}},\frac{{\mathbf{S}_{1+\floor{tN}}}}{N^{1/2}}, \frac{{\bar{\mathbf{S}}_{1+\floor{tN}}}}{N^{1/2}}};\ttau=N+1,S_{N+1}=0\bigg]
\end{align*}
We then use \eqref{eq:b:asympatus} in combination with  the convergence in distribution already referred to in the first paragraph of 
Subsection \ref{subsec:a:lltexc} as a consequence of \cite[Corollary
2.5]{CarCha13},  to obtain that there exists a $C_5>0$ such that 
\begin{equation}
  \lim_{N\to +\infty}  N^{3/2} D_{2,N} = 2 C_5 \esp{F(A(e),\mathbf e_t, \mathbf B_t)}.
\end{equation}
To handle $D_{1,N}$ we take into account the time the random walk
sticks to $0$ and obtain
\begin{align*}
  \frac{D_{1,N} }{2\mu(0)} &= \sum_{0\le k\le N-2} {\bf E}_{\beta}\bigg[F\etp{\frac{G_N}{N^{3/2}},\frac{\valabs{\mathbf{S}_{\floor{tN}}}}{N^{1/2}},\frac{\bar{\mathbf{S}}_{\floor{tN}}}{N^{1/2}}}; S_1=\cdots=S_k=0,S_{k+1}>0,\ldots,S_{N-1}>0,S_{N}=0\bigg] \\
  &=  \sum_{0\le k\le N-2} \frac{1}{c_\beta^k} {\bf E}_{\beta}\bigg[F\etp{\frac{A_{N-k}}{N^{3/2}},\frac{{\mathbf{S}_{\floor{tN}-k}}}{N^{1/2}},\frac{\bar{\mathbf{S}}_{\floor{tN}}}{N^{1/2}}};\ttau=N-k,S_{N-k}=0\bigg] =: \sum_{0\le k\le N-2} b_{k,N}\,.
\end{align*}
We shall conclude by applying the dominated convergence
theorem. First, the convergence in distribution referred to in
subsection \ref{subsec:a:lltexc} ensures us that for every $k\in \N$, 
$$ \lim_{N\to +\infty} N^{3/2}\, b_{k,N} = \frac{C_5}{c_\beta^k} \esp{F(A(e),\mathbf e_t,\mathbf B_t)}\,.$$
Second, since $c_\beta>1$, the domination bound is easily derived from
\ref{eq:b:asympatus} which implies that there exist $C_6, C_7>0$ such that  
\begin{align*}
  N^{3/2}\, b_{k,N} &\le C_3 \norme{F}_{\infty}\etp{ \frac{N}{N-k}}^{3/2} c_\beta^{-k}\\
  &\le C_6 \etp{ \frac{N}{N-k}}^{3/2} c_\beta^{-k}\,  ( \ind_{\{k< N/2\}}+\ind_{\{k\ge N/2\}} ) \\
  &\le C_6 \bigg(2^{3/2} c_\beta^{-k} + c_\beta^{-k/2} \, \frac{N^{3/2}}{c_\beta^{k/2}}\, \ind_{\{k\ge N/2\}}\bigg) \le C_7\,  c_\beta^{-k/2}.
\end{align*}

\subsection{Smoothness of approximations} \label{subsec:b:smooth}

A tedious but straightforward way to obtain items (i) to (iv) of
Proposition~\ref{thm:cvloiplusapproxregegallclt}, is to follow the
same steps as for the proof of Theorem \ref{thm:a}, substituting the conditioning
on $\tau$ to the conditioning on $\ttau$. We will not repeat all those steps here, but we will 
show (with lemmas \ref{balt1} and \ref{balt2} below) how to adapt the proofs of Section \ref{subsec:a:lltexc} to derive the counterparts bounds of \eqref{eq:simtau} and \eqref{eq:crudeboudsnxtau}. 
\begin{lemma}\label{balt1}
There exists $\widetilde C_1>0$ such that for 
  $${\bf P}_{\beta,\mu}(\tau>N)= \frac{\tilde{C}_1}{N^{-1/2}} (1+o(1)).$$
\end{lemma}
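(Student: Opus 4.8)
The plan is to exploit the geometric nature of the increments to reduce ${\bf P}_{\beta,\mu}(\tau>N)$ to first--passage probabilities of the underlying walk started from a \emph{fixed} point, for which sharp asymptotics are classical. Unravelling \eqref{deftauseq}, the event $\{\tau>N\}$ is precisely the event that $S$ stays at $0$ on a (possibly empty) initial interval $\{0,1,\dots,k\}$ with $k\le N$, and then keeps a constant \emph{strict} sign on $\{k+1,\dots,N\}$ --- indeed, once $S$ has left $0$ it can neither return to $0$ nor change sign strictly before time $\tau$. Conditioning on $S_0$, and when $S_0=0$ on the first time $k+1$ at which the walk leaves the origin, then using the symmetry of the increments and the Markov property, one gets
\begin{align}\label{eq:balt1dec}
  {\bf P}_{\beta,\mu}(\tau>N) &= \mu(0)\,q^{N} + (1-e^{-\beta/2})\sum_{b\ge 1}e^{-\beta b/2}\,{\bf P}_{\beta,b}(\tilde\tau>N)\\
  \nonumber &\qquad + \frac{2\,\mu(0)}{c_\beta}\sum_{k=0}^{N-1}q^{k}\sum_{c\ge 1}e^{-\beta c/2}\,{\bf P}_{\beta,c}(\tilde\tau>N-k-1),
\end{align}
where $q={\bf P}_{\beta}(X_1=0)=1/c_\beta\in(0,1)$; here we used that for $b\neq 0$ the time $\tau$ under ${\bf P}_{\beta,b}$ is the first passage of $S$ to the closed half--line not containing $b$, which by reflection has the law of $\tilde\tau$ under ${\bf P}_{\beta,|b|}$.

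Next I would invoke the classical fluctuation theory for centered integer--valued random walks of finite variance (our increments satisfy \eqref{defhyprw}): for each fixed $b\ge 1$,
\[
 {\bf P}_{\beta,b}(\tilde\tau>M)=\kappa_\beta\,V(b)\,M^{-1/2}(1+o(1))\qquad (M\to\infty),
\]
where $\kappa_\beta>0$ and $V\ge 1$ is the renewal function of the strict descending ladder heights of $S$, together with the uniform bound ${\bf P}_{\beta,b}(\tilde\tau>M)\le C(1+b)M^{-1/2}$ for all $b\ge 0$, $M\ge 1$ (which reflects $V(b)\le C(1+b)$); both are available from the same kind of sources already used here, e.g. \cite{VatWac09} and \cite{CarCha13}. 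In particular $\sqrt{N}\,{\bf P}_{\beta,b}(\tilde\tau>N)\to\kappa_\beta V(b)$ and, for fixed $k,c$, $\sqrt{N}\,{\bf P}_{\beta,c}(\tilde\tau>N-k-1)\to\kappa_\beta V(c)$ since $N/(N-k-1)\to 1$.

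I would then pass to the limit in \eqref{eq:balt1dec} by dominated convergence. The single sum is dominated termwise by $C(1+b)e^{-\beta b/2}$, which is summable; for the double sum, splitting $k\le N/2$ from $k>N/2$ shows that, for $N$ large, $q^{k}\sqrt{N}\,{\bf P}_{\beta,c}(\tilde\tau>N-k-1)$ is bounded by $\sqrt{3}\,C(1+c)\,q^{k}+\big(\sup_{m\ge1}q^{m/2}\sqrt{2m}\big)q^{k/2}$, which is summable over $(k,c)$ against $e^{-\beta c/2}$; and $\mu(0)q^{N}=o(N^{-1/2})$. Hence ${\bf P}_{\beta,\mu}(\tau>N)=\widetilde{C}_1\,N^{-1/2}(1+o(1))$ with
\[
 \widetilde{C}_1=\kappa_\beta\Big[(1-e^{-\beta/2})\sum_{b\ge1}e^{-\beta b/2}V(b)+\frac{2\,\mu(0)}{c_\beta(1-q)}\sum_{c\ge1}e^{-\beta c/2}V(c)\Big]\in(0,\infty),
\]
the finiteness coming from the estimates above and the positivity from $V\ge 1$.

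The only genuinely non--routine input is the sharp first--passage asymptotic ${\bf P}_{\beta,b}(\tilde\tau>M)\sim\kappa_\beta V(b)M^{-1/2}$ together with $V(b)=O(b)$: the matching lower bound and the exact polynomial rate go beyond the mere upper bound \eqref{eq:simtau} and rely on the Wiener--Hopf / ladder--variable machinery, which is available for walks satisfying \eqref{defhyprw}. Everything else --- the combinatorial bookkeeping forced by the nonstandard definition \eqref{deftauseq} of $\tau$, and the dominated--convergence step --- is routine and parallels the treatment of the $D_{1,N}$ term in Subsection \ref{subsec:b:cvdis}.
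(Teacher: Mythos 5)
Your decomposition of $\ens{\tau>N}$ is exactly the one the paper uses: condition on $S_0$, note that for $S_0=b\neq 0$ the event is $\ens{\tilde\tau>N}$ under ${\bf P}_{\beta,|b|}$ by symmetry, and for $S_0=0$ further condition on the time the walk sticks at the origin. The argument is correct; where you diverge is in how the dependence on the starting point $b$ (resp.\ on the first non-zero value $c$) is handled. The paper exploits once more the identity $\mu(b)=C_4\,{\bf P}_\beta(X_1=b)$ for $b\geq 1$ (and, in your double sum, the analogous fact that $e^{-\beta c/2}/c_\beta={\bf P}_\beta(X_1=c)$) to collapse the sums over starting points into a single first-passage probability from the origin, namely $\sum_{b\ge 1}\mu(b)\,{\bf P}_{\beta,b}(\tilde\tau>N)=C_4\,{\bf P}_\beta(\tilde\tau>N+1)$ and $\sum_{c\ge1}e^{-\beta c/2}{\bf P}_{\beta,c}(\tilde\tau>N-k-1)=c_\beta\,{\bf P}_\beta(\tilde\tau>N-k)$. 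After that, the only asymptotic input needed is ${\bf P}_\beta(\tilde\tau>M)\sim C\,M^{-1/2}$ for the walk started at the origin, plus the upper bound \eqref{eq:simtau} for domination. You instead keep the sums over $b$ and $c$ and invoke the uniform-in-$b$ first-passage asymptotic ${\bf P}_{\beta,b}(\tilde\tau>M)\sim\kappa_\beta V(b)M^{-1/2}$ with the ladder-height renewal function and the bound $V(b)=O(b)$; this is standard Wiener--Hopf material and your dominated-convergence bookkeeping with it is sound, but it is strictly heavier machinery than the problem requires here, since the geometric form of $\mu$ and of the increments makes the renewal function avoidable. Both routes yield the same constant (your expression for $\widetilde C_1$ collapses to the paper's after the substitution above). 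One small point in your favour: you correctly flag that the sharp asymptotic, not just the upper bound \eqref{eq:simtau}, is the essential input; the paper's own proof cites only \eqref{eq:simtau} at the final step, although it too is implicitly using the classical equivalence ${\bf P}_\beta(\tilde\tau>M)\sim C M^{-1/2}$.
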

\begin{proof}
The same decomposition as that used to obtain \eqref{decDD}
allows us to write  
  \begin{align}\label{dercal}
  \nonumber  {\bf P}_{\beta,\mu}(\tau>N) &= \mu(0)\,  {\bf P}_{\beta}(\tau >N) + 2\,  \sum_{b\ge 1} \mu(b)\,  {\bf P}_{\beta,b}(\tilde \tau >N) \\
    &= \mu(0)\,  {\bf P}_{\beta}(\tau >N)+ 2\,  C_4\,  {\bf P}_{\beta}(\ttau > N+1).
  \end{align}
  We now compute ${\bf P}_{\beta}(\tau >N)$ by taking into account the time $k$ that the random walk
  sticks to $0$
  \begin{align*}
{\bf P}_{\beta}(\tau >N) &= {\bf P}_{\beta}(S_1= \cdots=S_N=0) + 2 \sum_{0\le k \le N-1} {\bf P}_{\beta}(S_1= \cdots=S_k=0, S_{k+1}>0,\ldots,S_N>0) \\
    &= \frac{1}{c_{\beta}^{N}}  + 2 \sum_{0\le k \le N-1} \frac{1}{c_{\beta}^{k}}\,  {\bf P}_{\beta}(\ttau >N-k)
  \end{align*}
  With \eqref{eq:simtau} and since $c_\beta>1$, we can use the dominated convergence theorem to show that
  $$ N^{1/2} \sum_{0\le k \le N-1} \frac{1}{c_{\beta}^{k}}  {\bf P}_{\beta}(\ttau >N-k)
  \to C_1 {\sum_{k\ge 1} \frac{1}{c_{\beta}^{k}}}<\infty$$
  and this ends the proof.
\end{proof}

We can establish exactly in the same way that there exists $\widetilde C_2>0$ such that  ${\bf P}_{\beta,\mu}(\tau = N)=
\widetilde C_2 N^{-3/2} (1+o(1))$ and we also recall \eqref{eq:b:asympatus}.
Let us write explicitly another bound.
\begin{lemma}\label{balt2}
There exists $\widetilde C_3>0$ such that for every $N\in \N$
  \begin{equation*}
    \sup_{x\in \Z} {\bf P}_{\beta,\mu}(S_N=x,\tau>N) \le \frac{ \tilde{C}_3}{ N}\,.
  \end{equation*}
\end{lemma}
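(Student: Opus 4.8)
The strategy is to mimic the proof of Lemma \ref{balt1}: decompose ${\bf P}_{\beta,\mu}(S_N=x,\tau>N)$ according to the initial point $S_0$ and, in the case $S_0=0$, according to the number $k$ of steps the walk sticks to the origin. Using the decomposition that led to \eqref{decDD} (conditioning on $S_0$ and exploiting the symmetry of $\mu$), I would write
\begin{align}\label{planstep1}
  {\bf P}_{\beta,\mu}(S_N=x,\tau>N) &= \mu(0)\, {\bf P}_{\beta,0}(S_N=x,\tau>N) + 2\sum_{b\ge 1}\mu(b)\, {\bf P}_{\beta,b}(S_N=x,\ttau>N),
\end{align}
and then, for the first term, split further on the sticking time $k\in\{0,\dots,N-1\}$:
\begin{align}\label{planstep2}
  {\bf P}_{\beta,0}(S_N=x,\tau>N) = \frac{\un{x=0}}{c_\beta^N} + 2\sum_{k=0}^{N-1}\frac{1}{c_\beta^k}\, {\bf P}_{\beta}\big(S_{N-k}=x,\ttau>N-k\big).
\end{align}

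The second term on the right of \eqref{planstep1} is handled exactly as in Lemma \ref{balt1}: after conditioning on $S_1$ and using $\mu(b)=C_4{\bf P}_\beta(S_1=b)$ for $b\ge 1$, it becomes a constant times ${\bf P}_\beta(S_{N+1}=x,\ttau>N+1)$, which by the first inequality in \eqref{eq:crudeboudsnxtau} (applied to the walk under ${\bf P}_\beta$, whose increments \eqref{defincre} satisfy \eqref{defhyprw}) is $\le C_4'/(N+1)\le C_4'/N$ uniformly in $x$. For the sum in \eqref{planstep2}, I would bound ${\bf P}_\beta(S_{N-k}=x,\ttau>N-k)\le C_4/(N-k)$ by \eqref{eq:crudeboudsnxtau} again, so that
\begin{equation}\label{planstep3}
  \sup_{x\in\Z}{\bf P}_{\beta,0}(S_N=x,\tau>N) \le \frac{1}{c_\beta^N} + 2C_4\sum_{k=0}^{N-1}\frac{1}{c_\beta^k}\frac{1}{N-k}.
\end{equation}
Splitting the last sum at $k=\lfloor N/2\rfloor$: the terms with $k<N/2$ contribute at most $\frac{2}{N}\sum_{k\ge0}c_\beta^{-k}=O(1/N)$ since $c_\beta>1$, and the terms with $k\ge N/2$ contribute at most $\sum_{k\ge N/2}c_\beta^{-k}=O(c_\beta^{-N/2})=o(1/N)$. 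Hence $\sup_x{\bf P}_{\beta,0}(S_N=x,\tau>N)\le \widetilde C/N$, and combining with the bound on the second term of \eqref{planstep1} gives the claim with a suitable $\widetilde C_3$.

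There is no serious obstacle here; the only point requiring a little care is the book-keeping in the dominated-convergence-style splitting of the geometric sum in \eqref{planstep3}, and checking that \eqref{eq:crudeboudsnxtau} indeed applies to the Laplace walk of \eqref{defincre} — but that walk is integer-valued, centered, symmetric (so $\bbE(X_1^3)=0$), and has finite fourth moment, so \eqref{defhyprw} holds and all the bounds of Section \ref{subsec:a:lltexc} are available. The same argument, with ${\bf P}_\beta(S_{N-k}=x,A_{N-k}=a,\ttau>N-k)\le C_5/(N-k)^{5/2}$ from the second inequality in \eqref{eq:crudeboudsnxtau} replacing the first, would likewise yield $\sup_{a,x}{\bf P}_{\beta,\mu}(S_N=x,G_N=a,\tau>N)\le \widetilde C/N^{5/2}$, which is the form actually needed downstream; I would remark on this rather than repeat the computation.
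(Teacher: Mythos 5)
Your proof is correct, but it follows a different route from the paper's. The paper's argument is a three-line application of Markov's property at time $M=\lfloor N/2\rfloor$: it drops the constraint $\tau>N-M$ on the second half of the path, bounds ${\bf P}_{\beta,z}(S_{N-M}=x)\le C_2(N-M)^{-1/2}$ by the unconditioned LCLT \eqref{eq:simtau2}, and then sums over $z$ to recognize ${\bf P}_{\beta,\mu}(\tau>M)$, which Lemma \ref{balt1} bounds by $\tilde C_1 M^{-1/2}$; the product gives $C/N$. You instead push the decomposition on the starting point and on the sticking time at the origin (the very technique the paper uses to prove Lemma \ref{balt1} and to obtain \eqref{decDD}) all the way down to the bound \eqref{eq:crudeboudsnxtau} for the walk conditioned on $\{\tilde\tau>n\}$, and then control the weighted geometric sum $\sum_k c_\beta^{-k}(N-k)^{-1}$ by splitting at $k=\lfloor N/2\rfloor$. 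Both are valid: the paper's version is shorter but uses Lemma \ref{balt1} as a black box, whereas yours is self-contained relative to Section 5.1 and, as you note, transfers verbatim to the joint bound $\sup_{a,x}{\bf P}_{\beta,\mu}(S_N=x,G_N=a,\tau>N)\le C N^{-5/2}$, which is what is actually needed downstream. One cosmetic caveat: your displays \eqref{planstep1}--\eqref{planstep2} written with ``$=$'' and a factor $2$ in front of $\{S_N=x\}$ are only upper bounds for fixed $x$ (a negative excursion ends at a negative point, so the exact identity pairs $\{S_N=x\}$ with $\{S_N=-x\}$); since you take a supremum over $x\in\Z$ this costs at most a factor $2$ and is harmless, but the equalities should be weakened to inequalities or the symmetric term written out.
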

\begin{proof}
   We use Markov's property at time $M=\floor{N/2}$ and the
  crude bound \eqref{eq:simtau2} to get
  \begin{align*}
  {\bf P}_{\beta,\mu}(S_N=x,\tau>N) &= \sum_z {\bf P}_{\beta,\mu}(S_M=z,\tau >M)\,  {\bf P}_{\beta,z}(S_{N-M}=x,\tau> N-M)\\
    &\le \sum_z  {\bf P}_{\beta,\mu}(S_M=z,\tau >M)\,  {\bf P}_{\beta,z}(S_{N-M}=x) \\
    &\le \frac{C_2}{ (N-M)^{1/2}} \sum_z  {\bf P}_{\beta,\mu}(S_M=z,\tau >M) = \frac{ C_2}{ (N-M)^{1/2}} {\bf P}_{\beta,\mu}(\tau >M) \\
    &\le \frac{C_2 \tilde{C}_1}{N}.
  \end{align*}
  \end{proof}
  
The analogues of all bounds in Section \ref{subsec:a:lltexc} (i.e., (\ref{eq:simtau}--\ref{eq:crudeboudsnxtau}) or (\ref{eq:tsnxpetit}--\ref{eq:e:lltboundxsmall}) and (\ref{eq:smoothnessstandartllt}--\ref{eq:smoothnessstandartllt22})  
can be derived similarly with $\tau$ instead of $\ttau$.

\fi

\section{Excursion measure normalized by its length or area}\label{excnorm2}
\newcommand{\upfad}{(1+ \frac\alpha2)}
\newcommand{\usupfad}{\unsur{1+ \frac\alpha2}}
\newcommand{\nrho}[1]{n^\rho\etp{#1}}
\newcommand{\nplus}[1]{n_+\etp{#1}}
\newcommand{\intof}{\int_0^{+\infty}}

The law of a normalized excursion may be defined either by the
independence property (Proposition~\ref{pro:indeshaperho}) or the
disintegration property (Proposition~\ref{pro:equivshapedisin}).

We combine these results with  the well known fact
that the power of a Bessel process is a time changed Bessel Process to
establish finally Theorem~\ref{thm:c}.
\subsection{Normalizing an excursion measure}
Let $\rho$ be a Bessel process of dimension $\delta\in(0,2)$ and
$n^\rho$ be its Itô's measure for excursions away from $0$. The
following description of $n^\rho$ has been established in
\cite[Theorem 1.1]{PitYor96} (and generalizes Itô's description for
  Brownian motion $\delta=1$):
  \begin{itemize}
  \item The lifetime $R$ of the excursion has the $\sigma$-finite
    density: with $\delta'=4-\delta$,
    \begin{equation}
      \label{eq:denszetabessel}
      n^\rho(R \in dt) = 2^{- \delta'/2} \Gamma(\delta'/2)^{-1}
      t^{-\delta'/2}\, dt.
    \end{equation}
\item for any positive measurable functional $F$ defined on the
  excursion space $(U,\cU)$ we have the disintegration of measure
  \begin{equation}
\label{eq:disintegrationbessel}
n^\rho(F) = \int_0^{+\infty} P^{\delta',t}_{0,0}(F) n^\rho(R \in dt),
\end{equation}
with $ P^{\delta',t}_{0,0}$ the law of a Bessel bridge of dimension
$\delta'$ and length $t$.
  \end{itemize}

This description leads to defining $ P^{\delta',1}_{0,0}$ as the law
of an excursion of $\rho$ normalized by its length (lifetime). 
We shall see that thanks to the Brownian scaling we can extend this to
other functionals than $R$, by establishing an independence
property equivalent to the disintegration of measure.

Let $\alpha \ge 0$ and for $w\in U$, a generic excursion of lifetime
$$R(w) =\inf\ens{t>0 : w(t)=0},$$
we consider the weighted area
\begin{equation}
  \label{eq:defweightedarea}
  A^\alpha_t(w)=A_t(w) = \int_0^t w(s)^\alpha\, ds,\quad
  A(w):=A_\infty(w)= \int_0^{R} w(s)^\alpha\, ds.
\end{equation}
Let $s_c:U\to U$ be the Brownian scaling operator : 
$\displaystyle s_c(w)(t) = \unsur{\sqrt{c}} w(ct)$ for $t\ge 0$.
Then 
$\displaystyle A(s_c(w)) = c^{-\upfad} A(w)$
 and we define the normalizing by $A$ operator as
\be{dispsa}
\nu(w)= s_{A(w)^{\usupfad}}(w),
\ee
which satisfies $A(\nu(w))=1$.

\begin{proposition}\label{pro:indeshaperho}
  Assume that $n^\rho(A>1) < +\infty$. Then there exists a probability
measure $\pi^{\rho,A}$ defined on $(U,\cU)$ such that for every
positive measurable $F,\psi$ :
\begin{equation}
  \label{eq:indeshape}
  n^\rho(F\circ \nu \, \psi(A)) = \pi^{\rho,A}(F)\,   n^\rho(\psi(A)).
\end{equation}
Moreover $\pi^{\rho,A}(A=1)=1$.
\end{proposition}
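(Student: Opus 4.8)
The plan is to exploit the Brownian scaling operator $s_c$ and the fact that $\nu$ is essentially an inverse-scaling that ``lands'' every excursion on the level set $\{A=1\}$. First I would record the scaling identity: because $\rho$ is a Bessel process of dimension $\delta$, its Itô excursion measure $n^\rho$ enjoys the scaling relation $n^\rho \circ s_c^{-1} = c^{\gamma}\, n^\rho$ for an appropriate exponent $\gamma$ (this follows from the Brownian scaling of Bessel processes together with the $\sigma$-finite density \eqref{eq:denszetabessel} of the lifetime; concretely $s_c$ multiplies $R$ by $c$, so $\gamma$ is read off from the power $t^{-\delta'/2}$). From $A(s_c(w)) = c^{-\upfad} A(w)$ one also gets that $s_c$ transports the ``$A$-distribution'' under $n^\rho$ in a scaling-covariant way; in particular $n^\rho(A>1)<+\infty$ forces $n^\rho(A>\lambda) = \lambda^{-\kappa} n^\rho(A>1)$ for a fixed $\kappa>0$, so $n^\rho(\psi(A))<\infty$ for all bounded $\psi$ supported away from $0$ and the right-hand side of \eqref{eq:indeshape} makes sense.

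Next I would \emph{define} $\pi^{\rho,A}$ as the push-forward $n^\rho\big(\,\cdot\ \cap\{A>1\}\,\big)\circ\nu^{-1}$, renormalized to total mass $1$; since $A\circ\nu\equiv 1$ on $\{0<A<\infty\}$, this is automatically supported on $\{A=1\}$, giving $\pi^{\rho,A}(A=1)=1$. The content is the factorization \eqref{eq:indeshape}. To prove it, take $F,\psi$ positive measurable and write, using the change of variables $w\mapsto (\nu(w),A(w))$,
\begin{equation}
  n^\rho\big(F\circ\nu\,\psi(A)\big) = \int_{(0,\infty)} \big(n^\rho\circ\nu^{-1}\big|_{A=c^{?}}\big)(F)\ \psi(c)\ n^\rho(A\in dc),
\end{equation}
so the claim reduces to showing that the conditional law of $\nu(w)$ given $A(w)=c$ does not depend on $c$. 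This is exactly where scaling enters: if $A(w)=c$ then $\nu(w) = s_{c^{\usupfad}}(w)$, and applying the scaling covariance $n^\rho\circ s_\lambda^{-1} = \lambda^\gamma n^\rho$ one checks that pushing $n^\rho(\,\cdot\mid A=c)$ forward by $s_{c^{\usupfad}}$ yields a measure on $\{A=1\}$ independent of $c$ — because rescaling from level $c$ to level $1$ and from level $c'$ to level $1$ differ by the scaling $s_{(c'/c)^{\usupfad}}$, which $n^\rho$ is invariant under up to the scalar $\gamma$-power that is absorbed into the normalization. Carrying this out carefully (disintegrating $n^\rho$ against $A$, say via the co-area/Fubini argument, and matching the scalar factors) gives \eqref{eq:indeshape} with the $\pi^{\rho,A}$ defined above, and taking $F\equiv 1$ confirms it is a probability measure.

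The main obstacle I expect is making the disintegration $n^\rho(\cdot\mid A\in dc)$ rigorous: $n^\rho$ is only $\sigma$-finite, $A$ is a nonlinear functional (not the lifetime $R$, for which \eqref{eq:disintegrationbessel} is already available), and one must be sure the conditional measures are well-defined and transform correctly under $s_c$. The clean way around this is to avoid explicit regular conditional probabilities: instead, verify \eqref{eq:indeshape} directly by testing against product functionals. That is, fix the \emph{definition} $\pi^{\rho,A}(F) := n^\rho(A>1)^{-1}\, n^\rho\big(F\circ\nu\ \mathbf 1_{\{A>1\}}\big)$ and then show $n^\rho\big(F\circ\nu\,\psi(A)\big) = \pi^{\rho,A}(F)\, n^\rho(\psi(A))$ by first checking it for $\psi = \mathbf 1_{(\lambda,\infty)}$: using $\nu\circ s_c = \nu$ and $A\circ s_c = c^{-\upfad}A$ together with $n^\rho\circ s_c^{-1}=c^\gamma n^\rho$, one has $n^\rho\big(F\circ\nu\,\mathbf 1_{\{A>\lambda\}}\big) = \lambda^{?}\, n^\rho\big(F\circ\nu\,\mathbf 1_{\{A>1\}}\big)$ with the same power $?$ as in $n^\rho(A>\lambda)=\lambda^{?}n^\rho(A>1)$, because $F\circ\nu$ is scaling-invariant and the scaling acts only on the indicator and on $n^\rho$ itself. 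This yields \eqref{eq:indeshape} for all such $\psi$, hence for indicators of intervals bounded away from $0$, hence (monotone class / $\pi$-$\lambda$) for all positive measurable $\psi$ vanishing near $0$, and then for general $\psi$ by monotone convergence since $\{A=0\}$ is $n^\rho$-negligible on the excursion space. Finiteness of $n^\rho(\psi(A))$ throughout is guaranteed by the hypothesis $n^\rho(A>1)<\infty$ and the scaling tail bound.
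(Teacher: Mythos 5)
Your final argument — define $\pi^{\rho,A}(F) := n^\rho(A>1)^{-1}\,n^\rho(F\circ\nu\,\mathbf 1_{\{A>1\}})$, verify \eqref{eq:indeshape} for $\psi=\mathbf 1_{(\lambda,\infty)}$ by scaling ($\nu\circ s_c=\nu$, $A\circ s_c=c^{-(1+\alpha/2)}A$, and $n^\rho$'s homogeneity under $s_c$), then extend by monotone class and finally let the cutoff level tend to $0$ — is precisely the paper's proof, which defines $\pi_t(F)=n^\rho(F\circ\nu;A\ge t)/n^\rho(A\ge t)$, shows $t$-independence by the same scaling computation, and then runs the same monotone-class/monotone-convergence extension. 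The exploratory disintegration framing in your middle paragraphs is correctly abandoned in favor of the direct scaling argument, so there is no gap.
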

\begin{remark}
Relation \eqref{eq:indeshape} is called the \emph{independence
  property} since it shows that the shape of an excursion $w$ is
independent of the value of the functional $A(w)$. We call
$\pi^{\rho,A}$ the \emph{law of an excursion of the Bessel process $\rho$
normalized by its functional $A$}. When $\alpha=0$, $A(w)=R(w)$
and $\pi^{\rho,R}=\pi^\rho$ is the law of normalized excursion of $\rho$.
\end{remark}

\begin{proof*}
  By scaling of the Bessel process we have $n^\rho(F(s_c(w)))=
  \unsur{\sqrt{c}} n^\rho(F(w))$. Therefore, if $c>0$ and
  $t=c^{-\upfad}$ :
$$ n^\rho(A>t)= \sqrt{c}\, n^\rho(A c^{-\upfad} >t) = \sqrt{c}\,
n^\rho(A>1) < +\infty.$$
Since $A$ is non identically zero we also have $n^\rho(A>t)>0$.

We claim that the measure
$$\pi_t (F) :=\frac{\nrho{F\circ \nu ; A \ge
    t}}{\nrho{A\ge t}}$$
 does not depend on the value of  $t>0$, since by scaling
 $\nu(s_c(w))=\nu(w)$ and
$$ c^{-\undemi} \nrho{F\circ \nu; A\ge t} = \nrho{F\circ \nu\circ s_c
; A\circ s_c \ge t} = \nrho{F\circ \nu; c^{-\upfad} \, A\ge t}.$$

Therefore $\pi_t(F)=\pi_{tc^{\upfad}}(F)$. We define
$\pi^{\rho,A}:=\pi_1$. It is a probability (take $F\equiv 1$) and
since $A(\nu(w))=1$ we have
$$ \pi^{\rho,A}(A=1)= \frac{\nrho{A\circ \nu=1; A\ge t}}{\nrho{A\ge t}}=1.$$

We now generalize this identity to \eqref{eq:indeshape} by a functional monotone class theorem
as follows. First observe that it suffices to prove that for any $c>0$
we have
\begin{equation}
  \label{eq:indeshape2c}
  n^\rho(F\circ \nu \, \psi(A); A>c) = \pi^{\rho,A}(F)\,  n^\rho(\psi(A);
  A>c),
\end{equation}
and use monotone convergence letting $c\downarrow 0^+$ to obtain
\eqref{eq:indeshape}.

Now fix for the moment $F$ positive measurable and bounded so that
$\pi^{\rho,A}({F}) < +\infty$. The space $\cH$ of bounded measurable
$\psi$ satisfying \eqref{eq:indeshape2c} is a monotone space that
contains $\cC:=\ens{1_{[a,+\infty[}, a\ge 0}$ stable by finite
products. Therefore $\cH$ contains all bounded $\sigma(\cC)$ measurable
$\psi$, i.e. all bounded Borel measurable $\psi$.
We can go now from positive bounded $F$ to positive $F$ by fixing
$\psi$ positive measurable and use monotone convergence. \hfill$\square$

\end{proof*}

Let us state the equivalence between the disintegration of measure
and the shape independence property.
\begin{proposition}\label{pro:equivshapedisin}
  There exists a probability $\pi$ on $(U,\cU)$ such that for all
  positive measurable $F$:
$$\nrho{F} = \int_0^{+\infty} \pi_a(F) \nrho{A\in da},$$
with $\displaystyle \pi_a=\pi\circ (s_{c})^{-1}$ image of $\pi$ by
$s_{c}$ with $c=a^{-\usupfad}$ , if and only if there exists a probability measure
$\mu$ on $(U,\cU)$ such that for every positive measurable $F,\psi$:
\begin{equation}\label{eq:indepmu}
\nrho{F\circ\nu\, \psi(A)} = \mu(F) \, \nrho{\psi(A)}.
\end{equation}
Moreover, if this is the case we have $\pi=\mu$ and $\pi(A=1)=1$.
\end{proposition}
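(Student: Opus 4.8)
The plan is to prove the two implications separately, using the scaling property of $n^\rho$ as the common engine, exactly as in the proof of Proposition~\ref{pro:indeshaperho}. The starting observation in both directions is the scaling identity $\nrho{F(s_c(w))} = c^{-1/2}\nrho{F(w)}$ together with the two facts $\nu(s_c(w)) = \nu(w)$ and $A(s_c(w)) = c^{-(1+\alpha/2)}A(w)$, all recorded just before the statement.

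First I would prove that the disintegration implies the independence property~\eqref{eq:indepmu}, with $\mu = \pi$. Given the disintegration $\nrho{F} = \int_0^{+\infty}\pi_a(F)\,\nrho{A\in da}$, apply it to the functional $w\mapsto (F\circ\nu)(w)\,\psi(A(w))$. Since $\pi_a = \pi\circ(s_{a^{-1/(1+\alpha/2)}})^{-1}$ and $\nu\circ s_c = \nu$, one gets $\pi_a(F\circ\nu\,\psi(A)) = \psi(a)\,\pi_a(F\circ\nu)$, and again by $\nu\circ s_c=\nu$ we have $\pi_a(F\circ\nu) = \pi(F\circ\nu\circ s_{a^{-1/(1+\alpha/2)}}) = \pi(F\circ\nu)$, which is independent of $a$. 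Moreover, taking $F\equiv 1$ in the disintegration shows $\pi_a$ is a probability for each $a$, and one checks $\pi_a(A=1)=1$ because $A(s_c(w)) = c^{-(1+\alpha/2)}A(w)$ with $c = a^{-1/(1+\alpha/2)}$ forces $A = a$ on the support of $\pi_a$, hence $A\circ\nu = A\circ s_{A^{1/(1+\alpha/2)}} \equiv 1$; thus also $\pi(A=1) = \pi_1(A=1) = 1$ and $\pi(F\circ\nu) = \pi(F)$. Plugging back, $\nrho{F\circ\nu\,\psi(A)} = \pi(F)\int_0^{+\infty}\psi(a)\,\nrho{A\in da} = \pi(F)\,\nrho{\psi(A)}$, which is~\eqref{eq:indepmu} with $\mu = \pi$.

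Conversely, assume~\eqref{eq:indepmu} holds for some probability $\mu$. Taking $F\equiv 1$ gives no information, but taking $\psi = 1_{[c,+\infty)}$ and arguing as in Proposition~\ref{pro:indeshaperho} one sees that $n^\rho(A>c)$ is finite and positive for all $c>0$ (here one uses that $A$ is not identically zero and the scaling of $n^\rho$), so the disintegrating kernel is well-defined. Now define, for $a>0$, the probability $\pi_a := \mu\circ(s_{a^{-1/(1+\alpha/2)}})^{-1}$, and define the candidate disintegration $\widehat{n}(F) := \int_0^{+\infty}\pi_a(F)\,\nrho{A\in da}$. I must show $\widehat{n} = n^\rho$ as measures on $(U,\cU)$. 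By a functional monotone class argument it suffices to test against functionals of the form $F(w) = G(\nu(w))\,\psi(A(w))$ with $G,\psi$ bounded measurable, since these generate $\cU$ (every $w$ with $A(w)\in(0,\infty)$ is recovered from the pair $(\nu(w),A(w))$ via $w = s_{A(w)^{-1/(1+\alpha/2)}}(\nu(w))$, and the set $\{A=0\}\cup\{A=\infty\}$ is $n^\rho$-null by the integrability just established together with $n^\rho(A=0)=0$). On such an $F$, the left side is $\nrho{G\circ\nu\,\psi(A)} = \mu(G)\,\nrho{\psi(A)}$ by~\eqref{eq:indepmu}; the right side is $\int \pi_a(G\circ\nu\,\psi(A))\,\nrho{A\in da}$, and since $\pi_a$ is supported on $\{A=a\}$ with $\nu = \mathrm{id}$ there (same computation as above: $A\circ\nu=1$ and more precisely $\nu(w)=w$ when $A(w)=1$), this equals $\int \psi(a)\,\mu(G)\,\nrho{A\in da} = \mu(G)\,\nrho{\psi(A)}$. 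The two sides agree, so $\widehat n = n^\rho$, giving the disintegration with $\pi = \mu$; and $\pi(A=1) = \pi_1(A=1) = 1$ as before.

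The main obstacle I anticipate is the measure-theoretic bookkeeping in the converse direction: making precise that the maps $w\mapsto(\nu(w),A(w))$ and $(v,a)\mapsto s_{a^{-1/(1+\alpha/2)}}(v)$ are mutually inverse bijections between $\{w : 0<A(w)<\infty\}$ and $\pi^{\rho,A}\text{-}\mathrm{supp}\times(0,\infty)$ in a measurable way, so that testing against $G\circ\nu\,\psi(A)$ really does determine $n^\rho$ restricted to $\{0<A<\infty\}$, and then checking separately that both $\widehat n$ and $n^\rho$ assign the same (zero) mass to $\{A=0\}\cup\{A=\infty\}$. The monotone class theorem then upgrades "agreement on products" to "agreement of measures", exactly as in the last paragraph of the proof of Proposition~\ref{pro:indeshaperho}, which I would cite rather than repeat.
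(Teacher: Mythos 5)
Your sufficiency direction (independence $\Rightarrow$ disintegration) is essentially correct and close to the paper's argument; in particular you correctly observe that applying~\eqref{eq:indepmu} to the indicator of $\{A=1\}$, together with $A\circ\nu\equiv 1$, forces $\mu(A=1)=1$, which is what makes $\pi_a$ sit on $\{A=a\}$ and drives the rest of that computation.

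The necessity direction, however, contains a genuine circularity. You assert that the scaling identity $A\circ s_c = c^{-(1+\alpha/2)}A$ ``forces $A=a$ on the support of $\pi_a$''. This does not follow. By definition $\pi_a=\pi\circ s_c^{-1}$ with $c=a^{-1/(1+\alpha/2)}$, so under $\pi_a$ the law of $A$ is the law of $aA$ under $\pi$; concluding that $A=a$ $\pi_a$-a.s.\ requires precisely that $A=1$ $\pi$-a.s., which is what you are trying to prove. The same gap already appears in the earlier step where you write $\pi_a(F\circ\nu\,\psi(A))=\psi(a)\,\pi_a(F\circ\nu)$: the correct identity from $\nu\circ s_c=\nu$ and $A\circ s_c=aA$ is $\pi_a(F\circ\nu\,\psi(A))=\pi\bigl(F\circ\nu\cdot\psi(aA)\bigr)$, and pulling $\psi(a)$ out of the expectation again needs $A\equiv 1$ under $\pi$. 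As written, nothing in your argument actually establishes $\pi(A=1)=1$.

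The paper obtains $\pi(A=1)=1$ by a different, non-circular route: it first applies the disintegration to $F\circ\nu\,\psi(A)$ to get $\nrho{F\circ\nu\,\psi(A)}=\intof\pi\bigl(F\circ\nu\cdot\psi(aA)\bigr)\,\nrho{A\in da}$, then sets $F\equiv 1$ to obtain $\nrho{\psi(A)}=\intof\pi(\psi(aA))\,\nrho{A\in da}$, compares this with the tautological disintegration $\nrho{\psi(A)}=\intof\psi(a)\,\nrho{A\in da}$, and deduces $\pi(A=1)=1$ from the resulting identity in $\psi$. Only after that fact is in hand is it substituted back to produce the factorisation $\nrho{F\circ\nu\,\psi(A)}=\pi(F)\,\nrho{\psi(A)}$. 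You should replace your scaling claim by an argument of this type; until then the necessity direction does not close.
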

The proof of Proposition \ref{pro:equivshapedisin} is inspired by
\cite{PitYor96} and uses the links between scaling of Bessel processes
and independence.
\iflong
\else
It 
is displayed in \cite{CarPet17bext}.
\fi

\iflong
\begin{proof}
  \emph{Necessity} Assume we have the disintegration. Since $\nu\circ
  s_c=\nu$, we have
  \begin{align}
    \nrho{F\circ\nu\, \psi(A)} &= \intof \pi_a(F\circ\nu \psi(A))\,
    \nrho{A\in da} \notag\\
&= \intof \pi\etp{F\circ\nu(s_{a^{-\usupfad}}(w))\,
  \psi(A(s_{a^{-\usupfad}}(w)))}  \nrho{A\in da}\notag \\
&= \intof \pi\etp{F\circ\nu(w)\,
  \psi(aA)}  \nrho{A\in da} \,. \label{eq:shdistrois}
  \end{align}
Letting $F\equiv 1$ we obtain for all positive measurable $\psi$:
$$ \nrho{\psi(A)} = \intof \pi\etp{  \psi(aA)}  \nrho{A\in da} =
\intof \psi(a)  \nrho{A\in da}\,.$$
Therefore for almost every $a>0$, $\pi(\psi(a A))=\psi(a)$ that is
$\pi(A=1)=1$ so that under $\pi$, $\nu(w)=w$ and we reinject this in
\eqref{eq:shdistrois} to obtain
 \begin{align*}
    \nrho{F\circ\nu\, \psi(A)} &= \intof \pi(F\circ\nu ) \psi(a)
    \nrho{A\in da} \\
&= \intof \pi\etp{F} \psi(a)  \nrho{A\in da} \\
&=\pi(F)\,  \nrho{\psi(A)} \,. 
  \end{align*}
\emph{Sufficiency}. Assume now the existence of $\mu$
satisfying\eqref{eq:indepmu}. Then by the monotone class theorem we
get that for every positive measurable $G$ we have

$$ \nrho{G(\nu(w), A(w))} = \int\int G(w,a) d\mu(w) \nrho{A\in
  da}\,.$$
Therefore if we let $\pi=\mu$ and $\pi_a= \pi\circ (s_{c})^{-1}$,
we obtain for a positive measurable $F$, letting
$G(w,a):=F(s_{a^{-\usupfad}}(w))$,
$$ \nrho{F}=\nrho{G(\nu(w), A(w))} = \intof \etp{\int
  F(s_{a^{-\usupfad}}(w)) \pi(dw)} \nrho{A\in
  da} = \intof \pi_a(F) \nrho{A\in
  da}\,.
$$
\end{proof}

\fi

Combining \eqref{eq:denszetabessel} and \eqref{eq:disintegrationbessel} we obtain the 
\begin{corollary}\label{cor:excmeabessellength}
  The law $\pi^\rho$ of a Bessel excursion normalized by its length is
  well defined. We have $\pi^\rho=P^{\delta',1}_{0,0}$ and the
  independence property for the normalizing operator
  $\nu(w)=s_{R(w)}(w)$ is thus
$$ \nrho{F(\nu(w))\,  \psi(R(w)} = \pi^\rho(F)\,  \nrho{\psi(R)}.$$
\end{corollary}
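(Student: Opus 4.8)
The plan is to specialize the abstract machinery of Propositions~\ref{pro:indeshaperho} and~\ref{pro:equivshapedisin} to the choice $\alpha=0$ in \eqref{eq:defweightedarea}. First I would observe that on a generic excursion $w\in U$ one has $w(s)>0$ for $s\in(0,R(w))$, so that $A^0(w)=\int_0^{R(w)}w(s)^0\,ds=R(w)$; moreover $\usupfad=1$ when $\alpha=0$, hence the normalizing operator \eqref{dispsa} reduces to $\nu(w)=s_{R(w)}(w)$, which rescales $w$ to unit lifetime. In this setting the independence property \eqref{eq:indeshape}, resp.\ the disintegration identity of Proposition~\ref{pro:equivshapedisin}, become statements about $R$ and $\nu(w)=s_{R(w)}(w)$.

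Next I would check the standing hypothesis $n^\rho(A>1)<+\infty$ of Proposition~\ref{pro:indeshaperho}. With $\delta'=4-\delta$, \eqref{eq:denszetabessel} gives
\[
n^\rho(R>1)=\frac{1}{2^{\delta'/2}\,\Gamma(\delta'/2)}\int_1^{+\infty}t^{-\delta'/2}\,dt,
\]
which is finite precisely because $\delta\in(0,2)$ forces $\delta'>2$; the same computation shows in addition $n^\rho(R>1)>0$. Proposition~\ref{pro:indeshaperho} (with $\alpha=0$) then yields a probability measure $\pi^\rho:=\pi^{\rho,R}$ on $(U,\cU)$ with $\pi^\rho(R=1)=1$ and $n^\rho(F\circ\nu\,\psi(R))=\pi^\rho(F)\,n^\rho(\psi(R))$, which is exactly the displayed independence property; in particular the law of the Bessel excursion normalized by its length is well defined.

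It then remains to identify $\pi^\rho$ with $P^{\delta',1}_{0,0}$. By Proposition~\ref{pro:equivshapedisin} (applied with $\mu=\pi^\rho$), the independence property is equivalent to the disintegration $n^\rho(F)=\intof \pi^\rho_a(F)\,n^\rho(R\in da)$ with $\pi^\rho_a=\pi^\rho\circ(s_{a^{-1}})^{-1}$, and since $\pi^\rho(R=1)=1$ the measure $\pi^\rho_a$ is carried by the fiber $\{R=a\}$. On the other hand \eqref{eq:disintegrationbessel} exhibits a disintegration of $n^\rho$ along $R$ with kernel $a\mapsto P^{\delta',a}_{0,0}$. By essential uniqueness of the disintegration, together with the fact that $n^\rho(R\in da)$ has a strictly positive Lebesgue density, one gets $\pi^\rho_a=P^{\delta',a}_{0,0}$ for Lebesgue-a.e.\ $a>0$. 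Fixing one such $a$, I would push both sides forward by the inverse scaling $s_a$ of $s_{a^{-1}}$ and invoke the Brownian scaling of Bessel bridges, which sends $P^{\delta',a}_{0,0}$ to $P^{\delta',1}_{0,0}$; this yields $\pi^\rho=P^{\delta',1}_{0,0}$.

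Essentially the whole argument is a mechanical specialization of the two preceding propositions, so the only genuinely non-automatic point is the final identification: one must match the two disintegrations carefully — they are taken against the same explicit reference measure $n^\rho(R\in da)$ — and must quote, or re-derive directly from the scaling operator $s_c$, the elementary fact that a Bessel bridge of length $a$ rescaled to unit length is again a Bessel bridge (of length $1$), in order to transfer the almost-everywhere equality $\pi^\rho_a=P^{\delta',a}_{0,0}$ to the single value $a=1$.
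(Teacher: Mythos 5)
Your argument is correct, but it reaches the conclusion by a genuinely different route from the paper's. The paper reads the corollary directly off \eqref{eq:disintegrationbessel}: since $P^{\delta',t}_{0,0}=P^{\delta',1}_{0,0}\circ (s_{1/t})^{-1}$ by Brownian scaling of Bessel bridges, \eqref{eq:disintegrationbessel} is precisely a disintegration of the form required in Proposition~\ref{pro:equivshapedisin} (with $\alpha=0$, $A=R$ and $\pi=P^{\delta',1}_{0,0}$), so the ``disintegration implies independence'' half of that proposition immediately yields the displayed identity together with the identification $\pi^\rho=\pi=P^{\delta',1}_{0,0}$; the density \eqref{eq:denszetabessel} mainly serves to make the reference measure $\nrho{R\in dt}$ explicit. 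You instead first manufacture $\pi^\rho=\pi^{\rho,R}$ from Proposition~\ref{pro:indeshaperho} (after the check, correct and worth recording, that $\nrho{R>1}<+\infty$ because $\delta'>2$), and only then identify it with $P^{\delta',1}_{0,0}$ by running Proposition~\ref{pro:equivshapedisin} in the opposite direction and invoking essential uniqueness of disintegrations. That last step is sound: $n^\rho$ is $\sigma$-finite on $\ens{R>\epsilon}$, both kernels are carried by the fibers $\ens{R=a}$, and $\nrho{R\in da}$ has a strictly positive Lebesgue density, so the almost-everywhere identity transfers to a single $a$ and then to $a=1$ by scaling, exactly as you say. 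But it imports a uniqueness fact the paper never needs. What your route buys is that the existence of $\pi^\rho$ and the independence property are obtained without knowing the explicit bridge decomposition \eqref{eq:disintegrationbessel}; what the paper's route buys is brevity and the avoidance of any uniqueness argument.
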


For Brownian motion, $\delta=1$, we have $\pi^\rho=P^{3,1}_{0,0}$ and
we let $(e_t,0\le t\le 1)$ be a process distributed as
$P^{3,1}_{0,0}$, and named (normalized) Brownian excursion.

Another important application takes place for $\alpha=1$, that is
$A(w)=\intof w(s)\, ds$ is the excursion area, and $\delta=1$ that is
$\rho=\valabs{B}$ is the absolute value of a Brownian motion. There
$n^\rho=2n_+$ with $n_+$ the positive excursion measure of Brownian
excursions away from zero.
\begin{corollary} \label{cor:defnormedexcursionarea} For $\delta=\alpha=1$, 
  the law $\pi^{\rho,A}$ of the Brownian excursion normalized by its
  area $(\cE_s, s\geq 0)$ is well defined. If $\nu^1(w)=s_{A(w)^{2/3}}(w)$ is the
  normalizing operator, we have
  \begin{gather*}
    \nplus{F(\nu^1(w)) \psi(A(w))}=\esp{F(\cE)} \nplus{\psi(A)}\,,\\
    \nplus{F} = \int_0^\infty \esp{F\etp{a^{1/3} \cE_{a^{-2/3}s},0\le
        s\le 1}} \, \nplus{A\in da}\,.
  \end{gather*}
\end{corollary}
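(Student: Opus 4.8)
The plan is to specialize Proposition~\ref{pro:indeshaperho} and Proposition~\ref{pro:equivshapedisin} to $\delta=\alpha=1$. For these parameters $\rho=\valabs B$ is reflected Brownian motion, $\upfad=\tfrac32$ and $\usupfad=\tfrac23$, the weighted area $A(w)=\int_0^R w(s)\,ds$ is the ordinary area, the normalizing operator \eqref{dispsa} is exactly $\nu^1(w)=s_{A(w)^{2/3}}(w)$, and, as recalled just above, $n^\rho=2n_+$. The only hypothesis to be checked before invoking Proposition~\ref{pro:indeshaperho} is $\nrho{A>1}<+\infty$; once this is done, that proposition furnishes a probability measure $\pi^{\rho,A}$ on $(U,\cU)$, concentrated on $\ens{A=1}$, satisfying the independence property, and we define $(\cE_s,s\ge 0)$ to be a process with law $\pi^{\rho,A}$, so that $\esp{F(\cE)}=\pi^{\rho,A}(F)$; this is the meaning of ``the law of the Brownian excursion normalized by its area is well defined''.

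First I would verify the integrability. Combining \eqref{eq:denszetabessel} and \eqref{eq:disintegrationbessel} with $\delta'=3$, and using Brownian scaling of the Bessel($3$) bridge, so that $P^{3,t}_{0,0}(A>1)=\probb{A_1(e)>t^{-3/2}}$ with $e$ the standard Brownian excursion, one gets
\[
\nrho{A>1}=c\int_0^{+\infty}\probb{A_1(e)>t^{-3/2}}\,t^{-3/2}\,dt
\]
for a finite constant $c$. The contribution of $t\ge 1$ is at most $\int_1^{+\infty}t^{-3/2}\,dt<+\infty$, and for $t\le 1$ the substitution $u=t^{-3/2}$ together with Markov's inequality (and $\esp{A_1(e)}<+\infty$) bounds the contribution by $\tfrac23\int_1^{+\infty}\probb{A_1(e)>u}\,u^{-2/3}\,du\le \tfrac23\,\esp{A_1(e)}\int_1^{+\infty}u^{-5/3}\,du<+\infty$. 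Hence $\nrho{A>1}<+\infty$.

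Now the independence property of Proposition~\ref{pro:indeshaperho}, $\nrho{F\circ\nu\,\psi(A)}=\pi^{\rho,A}(F)\,\nrho{\psi(A)}$, becomes, after cancelling the factor $2$ coming from $n^\rho=2n_+$, the first display of the Corollary: $\nplus{F(\nu^1(w))\,\psi(A(w))}=\esp{F(\cE)}\,\nplus{\psi(A)}$. Since the independence property holds with $\mu=\pi^{\rho,A}$, Proposition~\ref{pro:equivshapedisin} applies and yields the disintegration $\nrho{F}=\intof\pi_a(F)\,\nrho{A\in da}$ with $\pi_a=\pi^{\rho,A}\circ(s_c)^{-1}$ and $c=a^{-2/3}$; cancelling the factor $2$ once more gives $\nplus{F}=\intof\pi_a(F)\,\nplus{A\in da}$. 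Finally, from the definition of the scaling operator, $s_{a^{-2/3}}(w)(s)=a^{1/3}w(a^{-2/3}s)$, so $\pi_a(F)=\esp{F\etp{a^{1/3}\cE_{a^{-2/3}s},\,0\le s\le 1}}$, which is the second display. The only step carrying any analysis is the verification of $\nrho{A>1}<+\infty$, and even there Markov's inequality with the first moment of the excursion area suffices; the rest is the bookkeeping of the factor $2$ and the explicit form of Brownian scaling, so no real obstacle is expected.
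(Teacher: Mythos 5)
Your proof is correct, and it takes a genuinely different route for the one analytical step, namely the verification that $n^{|B|}(A>1)<+\infty$. You go through the disintegration \eqref{eq:disintegrationbessel} with $\delta'=3$, apply Brownian scaling to reduce $P^{3,t}_{0,0}(A>1)$ to $\mathbb{P}(A_1(e)>t^{-3/2})$ where $e$ is the standard excursion, and then control the integral $\int_0^\infty \mathbb{P}(A_1(e)>t^{-3/2})\,t^{-3/2}\,dt$ by Markov's inequality using only $\mathbb{E}[A_1(e)]<\infty$; the substitution $u=t^{-3/2}$ and the computation are both correct. The paper instead appeals to the exponential formula for excursion point processes together with the Ray--Knight theorem for squared Bessel processes, which yields $\mathbb{E}[e^{-\lambda A_{\tau_t}}]=e^{-tC\lambda^{1/3}}$ and hence, after inverting the Laplace transform, the \emph{exact} tail $n_+(A>a)=C'a^{-1/3}$. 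Your argument is more elementary (no Ray--Knight, no Laplace inversion) and entirely sufficient for the Corollary as stated; the paper's is heavier but obtains the precise power law $n_+(A\in da)\propto a^{-4/3}\,da$, which the paper explicitly reuses later in the proof of Lemma~\ref{lem:norlennorarea}. Once the finiteness hypothesis is checked, your specialization of Propositions~\ref{pro:indeshaperho} and~\ref{pro:equivshapedisin} (including the cancellation of the factor $2$ coming from $n^{|B|}=2n_+$ and the unwinding of the scaling operator $s_{a^{-2/3}}(w)(s)=a^{1/3}w(a^{-2/3}s)$) matches what the Corollary asserts.
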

\begin{proof}
  It suffices to prove that $\nplus{A>1} < +\infty$. First observe
  that if $\tau_t$ is the inverse local time of Brownian motion, we
  have by the exponential formula of excursions, on the one hand, for
  any $\lambda>0$, 
$$ \esp{e^{-\lambda A_{\tau_t}}} = \exp\etp{- 2 t \int (1-e^{-\lambda
    a})\, \nplus{A\in da}}.$$
\iflong
On the other hand, by Ray-Knight theorem, if $(X_t,t\ge 0)$ is a square
Bessel process of dimension $0$ starting form $t$, we have
$$ \esp{e^{-\lambda A_{\tau_t}}} = \esp{e^{-\lambda \intof t
    X_t\, ds}}^2= e^{t \phi'(0)}$$
with $\phi(t)$ the unique positive non-increasing solution of
$\phi''(t)=\phi(t) \lambda t$ such that $\phi(0)=1$ (see \cite[Chapter
XII]{RY91}). We recognize an Airy-type  equation, $\phi(t)= \alpha
Ai(\lambda^{1/3} t) + \beta Bi(\lambda^{1/3} t)$ and the boundedness
impose $\phi(t)=\alpha Ai(\lambda^{1/3} t)= \unsur{Ai(0)}
Ai(\lambda^{1/3} t)$ since $\phi(0)=1$. We get $\phi'(0)=-C
\lambda^{1/3}$ and thus, for all $\lambda >0$, by Fubini's
$$  C \lambda^{1/3}=\int (1-e^{-\lambda a}) \nplus{A \in da} =\lambda
\int e^{-\lambda a} \nplus{A>a}\, da\,.$$
\else
Thanks to Ray-Knight theorem for square of Bessel processes (see \cite[Chapter
XII]{RY91}) we can compute
$$ \esp{e^{-\lambda A_{\tau_t}}} = e^{-t C \lambda^{1/3}}\,.$$
\fi
Inverting the Laplace transform yields
$\displaystyle\nplus{A>a} = C' a^{-1/3} < +\infty$.
\end{proof}

\subsection{Absolute continuity relationship between distributions of
  Brownian excursion normalized by its length and Brownian excursion normalized by its area}

This absolute continuity will be established by playing with  the
independence property of normalized excursions.

\begin{lemma}\label{lem:norlennorarea}
Let $h(u,x)$ be the joint density of $(A(e); e_{t_i}, 1\le i\le
d)$. Then there exists a constant $C>0$ such that the function 
$$ g(u,x) = C u^{-3} h(u^{-3/2}, x)$$
is the density of $(R(\cE), \unsur{\sqrt{R(\cE)}} \cE_{t_i
R(\cE)}, 1\le i\le d)$.
\begin{proof}
  
Let  $\phi(u,x)$ be measurable non-negative and 
\begin{align}
 I_\phi &:=  \int_0^{+\infty} du \int_{\R^d} \phi(u,x) g(u,x) dx
= C_1 \int_0^{+\infty} dy \int_{\R^d} \phi(y^{-2/3},x)y^{1/3} h(y,x)
dx \\
&=  C_1
\esp{\phi(A(e)^{-2/3}; e_{t_i},1\le i\le d) A(e)^{1/3}}.
\end{align}
To identify $I_\phi$ we shall use the independence property for the
excursion length : recall that $\nu(w)(t) = \unsur{\sqrt{R(w)}}
w(t R(w))$, for any positive measurable $\psi$:
\begin{align*}
  J_{\phi,\psi} &:= n_+(\psi(R))\; \esp{\phi(A(e)^{-2/3};
    e_{t_i},1\le i\le d) A(e)^{1/3}}\\
&= \nplus{A\circ \nu(w)^{1/3} \psi(R) \phi(A\circ \nu(w)^{-2/3};
  \nu(w)(t_i), 1\le i\le d)}\\
&= \nplus{A^{1/3} R^{-1/2} \psi(R) \phi(A^{-2/3} R;
  \nu(w)(t_i), 1\le i\le d)}.
\end{align*}
Now we use the independence property for the area. Recall that the
normalizing operator is $u(t):=\nu^1(w(t))= A(w)^{-1/3}
w(tA(w)^{2/3})$ and observe that we have $A(u)=1$, $R(u)=A(w)^{-2/3}
R(w)$ and
$ \nu(w)(t) = \nu(u)(t)$. Therefore
\begin{align*}
  J_{\phi,\psi} &:=\nplus{R(u)^{-1/2} \psi(R(u) A(w)^{2/3})
    \phi(R(u); \nu(u)(t_i), 1\le i\le d)} \\
&= \int \int R(u)^{-1/2} \psi(R(u) a^{2/3})
    \phi(R(u); \nu(u)(t_i), 1\le i\le d) \prob{\cE\in du} \nplus{A
      \in da} \\
&=\esp{\phi(R(\cE);\nu(\cE)(t_i), 1\le i\le d)\; \gamma(R(\cE))},
\end{align*}
with $\gamma(t):=t^{-1/2}\nplus{\psi(t A^{2/3}}$. We now identify this
function $\gamma(t)$. Recall that 
$$ \nplus{A\in da}=C_2 a^{-4/3} da,\quad \nplus{R\in dt}= C_3
t^{-3/2} dt.$$
We obtain
$$ \gamma(t) = C_2 \intof \psi(t a^{2/3})\, da = C_4 \intof \psi(u)
u^{-3/2}\, du = C_5 \nplus{\psi(R)}.$$
Injecting this into the last expression of $J_{\phi,\psi}$ yields
$$ J_{\phi,\psi} = C_5 \nplus{\psi(R) }
\esp{\phi(R(\cE);\nu(\cE)(t_i), 1\le i\le d)}.$$
Comparing the two expressions of $J_{\phi,\psi}$, we conclude that
$\displaystyle I_\phi = C_{6} \esp{\phi(R(\cE);\nu(\cE)(t_i), 1\le i\le d)}$
\end{proof}

\end{lemma}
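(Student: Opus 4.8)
The plan is to prove Lemma~\ref{lem:norlennorarea} by exploiting the independence property of normalized excursions (Proposition~\ref{pro:indeshaperho}) twice: first for the length normalization of $\rho=|B|$, and then for the area normalization (with $\alpha=1$), exactly as hinted at by the corollaries just stated. The underlying principle is that all the scaling relations between the excursion measure $n_+$, the length functional $R$, and the area functional $A$ are governed by a single one-parameter scaling group $s_c$, and $g$ is manufactured precisely so that the relevant change of variables $y\mapsto u=y^{-2/3}$ matches the $1/3$-stable densities of $R$ and $A$ under $n_+$.

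First I would set up the identification via a test function. Fix $\phi(u,x)\ge 0$ measurable and compute $I_\phi=\int_0^{+\infty}\!\!\int_{\R^d}\phi(u,x)g(u,x)\,dx\,du$. Substituting $g(u,x)=Cu^{-3}h(u^{-3/2},x)$ and changing variables $y=u^{-3/2}$ (so $du = -\tfrac23 y^{-5/3}\,dy$, and $u^{-3}=y^2$) turns $I_\phi$ into $C_1\esp{\phi(A(e)^{-2/3};e_{t_i})\,A(e)^{1/3}}$, using that $h$ is the joint density of $(A(e),e_{t_1},\dots,e_{t_d})$. The goal is then to show that this equals $C'\esp{\phi(R(\cE);\nu(\cE)(t_i))}$, i.e.\ to recognize it as an expectation against the claimed law. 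To do this I would introduce an auxiliary free function $\psi\ge 0$ and study $J_{\phi,\psi}:=n_+(\psi(R))\,\esp{\phi(A(e)^{-2/3};e_{t_i})\,A(e)^{1/3}}$, rewriting the excursion expectation using the length-independence property $\nplus{F(\nu(w))\psi(R(w))}=\pi^\rho(F)\nplus{\psi(R)}$ with $\pi^\rho$ the law of $e$; here $\nu(w)(t)=\tfrac1{\sqrt{R(w)}}w(tR(w))$. This expresses $J_{\phi,\psi}$ directly as an integral against $n_+$ of a functional of $w$ involving $A(w)$, $R(w)$, and $\nu(w)$.

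Next I would apply the area-independence property for $\alpha=1$ from Corollary~\ref{cor:defnormedexcursionarea}. Writing $u_w:=\nu^1(w)$ the area-normalized excursion, one has the key compatibility relations $A(u_w)=1$, $R(u_w)=A(w)^{-2/3}R(w)$, and $\nu(w)=\nu(u_w)$, and $\nplus{\cdot}$ disintegrates as $\int\int(\cdots)\,\prob{\cE\in du}\,\nplus{A\in da}$. Carrying out this disintegration, the $a$-integral factors out through a function $\gamma(t):=t^{-1/2}\nplus{\psi(tA^{2/3})}$, and using the explicit stable densities $\nplus{A\in da}=C_2 a^{-4/3}\,da$ and $\nplus{R\in dt}=C_3 t^{-3/2}\,dt$ (from \eqref{eq:denszetabessel} and Corollary~\ref{cor:defnormedexcursionarea}), a change of variables $u=ta^{2/3}$ shows $\gamma(t)=C_5\,\nplus{\psi(R)}$ is proportional to $\nplus{\psi(R)}$, independently of the shape. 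Substituting back yields $J_{\phi,\psi}=C_5\,\nplus{\psi(R)}\,\esp{\phi(R(\cE);\nu(\cE)(t_i))}$.

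Comparing the two expressions for $J_{\phi,\psi}$ and cancelling the common nonzero factor $\nplus{\psi(R)}$ (legitimate after choosing, say, $\psi$ compactly supported away from $0$ so this factor is finite and positive), one gets $I_\phi=C_6\,\esp{\phi(R(\cE);\nu(\cE)(t_i))}$ for all admissible $\phi$, which is exactly the assertion that $g$ is the density of $(R(\cE),\tfrac1{\sqrt{R(\cE)}}\cE_{t_iR(\cE)})$. I expect the main obstacle to be bookkeeping the scaling exponents consistently — making sure the relations $R(u_w)=A(w)^{-2/3}R(w)$ and $\nu(w)=\nu(u_w)$ are applied correctly and that the two changes of variables ($y=u^{-3/2}$ at the start, $u=ta^{2/3}$ inside $\gamma$) combine to give the stated power $u^{-3}$; a secondary technical point is justifying the finiteness of $n_+(A>1)$ needed to invoke Proposition~\ref{pro:indeshaperho}, but that is exactly the content of the proof of Corollary~\ref{cor:defnormedexcursionarea}, which may be quoted.
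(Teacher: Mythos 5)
Your proposal follows the paper's argument step for step: the same change of variables to express $I_\phi$ as $C_1\esp{\phi(A(e)^{-2/3};e_{t_i})A(e)^{1/3}}$, the same auxiliary functional $J_{\phi,\psi}$ exploiting first the length-normalization independence and then the area-normalization independence with the compatibility relations $A(u_w)=1$, $R(u_w)=A(w)^{-2/3}R(w)$, $\nu(w)=\nu(u_w)$, the same factorization through $\gamma(t)$ evaluated via the explicit stable densities for $A$ and $R$ under $n_+$, and the same final comparison. This is the paper's proof.
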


\subsection{Applying a time change to obtain the excursion measure of $Y_t=\big|B_{a_t}\big|$}

First, if we consider the excursion measure of $\valabs{B}$ with
respect to its local time at level zero 
$L^0_t(\valabs{B})= 2 L^0_t(B)$
of right continuous inverse  $\tau^{\valabs{B}}_t=\tau_{t/2}$ we
obtain that 
$n^{\valabs{B}}=n_+$ thanks to the exponential formula. 
\iflong 
Let  $f$ be measurable non-negative,
\begin{align*}
  \esp{\exp(- \sum_{0<s\le t} f(s,e^{\valabs{B}}_s))} &= \esp{\exp(-
    \sum_{0<s \le t} f(s,\valabs{e_{s/2}}))} 
=\exp\etp{- \int_0^{t/2} ds \int (1-e^{-f(2s,\valabs{u})})\, n(du)}\\
&\hspace{-1cm}=\exp\etp{-\undemi \int_0^t dr \int (1-e^{-f(r,\valabs{u})})n(du)} 
=\exp\etp{- \int_0^t dr  \int (1-e^{-f(r,u)}) n_+(du)}.
\end{align*}
\fi

We consider here the area of an excursion $A_t(w) = \int_0^t w(s)\,
ds$ and its inverse $a_w(s):=\inf\ens{t>0 : A_t(w)>s}$.

\begin{lemma}\label{lem:excy} The excursion measure away from zero  
   $n^Y(dw)$ of  $Y_t=\valabs{B_{a_t}}$ is the image of 
  $n_+(dw)$ by the function $(w(t),t\ge 0) \to (w(a_w(t)), t\ge 0)$.
\end{lemma}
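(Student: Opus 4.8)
The plan is to identify the excursion measure $n^Y$ of the time‑changed process $Y_t=|B_{a_t}|$ with the push‑forward of $n_+$ under the area‑time‑change map, using the same excursion‑theoretic bookkeeping already set up in the excerpt: one decomposes the path of $|B|$ into its excursions away from zero, applies the area time change inside each excursion, and checks that the induced local time and inverse local time transform in the expected way.

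First I would recall from the previous lemma (the $\alpha=1$, $\delta=1$ special case, $n^{|B|}=n_+$) that the excursion measure of $|B|$ with respect to its local time at $0$ is exactly $n_+$. Then I would write $A_t=A_t(|B|)=\int_0^t |B_s|\,ds$, observe that $t\mapsto A_t$ is continuous and strictly increasing on the complement of the zero set $Z(B)$, and is flat precisely on the (Lebesgue‑null) intervals where $|B|$ vanishes — but $|B|$ has no such intervals, so $A$ is in fact strictly increasing and continuous, with continuous inverse $a_t=a_{|B|}(t)$. Consequently the process $Y_t=|B_{a_t}|$ is a continuous nonnegative process whose zero set is $A(Z(B))$, the image of $Z(B)$ under the (continuous, increasing) map $A$. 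The key structural point is that the excursion intervals of $Y$ away from $0$ are exactly the $A$‑images of the excursion intervals of $|B|$: if $(g,d)$ is an excursion interval of $|B|$, then $(A_g,A_d)$ is an excursion interval of $Y$, and on it $Y_{A_g+s}=|B_{a_{A_g+s}}|=w(a_w(s))$ where $w=(|B_{g+u}|)_{0\le u\le d-g}$ is the corresponding excursion of $|B|$ and $a_w$ is its own (intrinsic) area inverse — this is precisely because the area accumulated by $|B|$ between times $g$ and $g+u$ equals the area accumulated by the excursion $w$ up to time $u$.

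Next I would match the local times. The additive functional that counts excursions of $Y$ should be shown to be a constant multiple of $L^0_{a_t}(|B|)$: since time is only being reparametrized (monotonically) and the zero set is carried over bijectively by $A$, the local time of $Y$ at $0$ — characterized e.g. as the unique continuous increasing process supported on $Z(Y)=A(Z(B))$ making $Y$ minus it a martingale after the right compensation, or more elementarily via the normalized number of downcrossings — is $L^0_t(Y)=L^0_{a_t}(|B|)$ up to the harmless normalization already fixed when we set $n^{|B|}=n_+$. Its right‑continuous inverse is then $\tau^Y_t = A_{\tau^{|B|}_t}$. With these identifications, the master formula of excursion theory (the exponential formula, or directly the Itô description) applied to $Y$ expresses $n^Y$ as the law, under $n_+$, of the excursion $w$ transformed by $w\mapsto (w(a_w(t)),t\ge 0)$: each excursion interval of $|B|$ contributes, under the $A$‑reparametrization, exactly the time‑changed excursion $w(a_w(\cdot))$, and the Poisson point process of $|B|$‑excursions indexed by $L^0(|B|)$ is carried to the Poisson point process of $Y$‑excursions indexed by $L^0(Y)$ with no change of intensity, since $L^0(Y)$ is just $L^0(|B|)$ read along $a_t$.

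The main obstacle I expect is the rigorous verification that the area time change $a_t$ does not destroy the excursion structure in pathological ways — specifically, proving that $A$ is strictly increasing (so $a_t$ is genuinely the inverse and no excursions get collapsed to points), that $Z(Y)=A(Z(B))$ and that $L^0_t(Y)=L^0_{a_t}(|B|)$ with the correct constant, i.e. that the time change is "local‑time preserving" in the appropriate normalization. Once this is in place, the rest is the standard excursion‑theoretic dictionary: the push‑forward of a Poisson point process under a measurable map that acts excursion‑by‑excursion is again a Poisson point process whose intensity is the push‑forward of the original intensity, which is exactly the assertion $n^Y(dw) = n_+\circ \big[(w(t),t\ge 0)\mapsto (w(a_w(t)),t\ge 0)\big]^{-1}$. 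I would also remark that the finiteness statements needed (e.g. $n_+(A>1)<\infty$ from Corollary~\ref{cor:defnormedexcursionarea}) guarantee that the time change $a_t$ is a.s. finite for all $t$ under the relevant conditionings, so no integrability issue arises.
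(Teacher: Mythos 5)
Your proposal is correct and follows essentially the same route as the paper: both arguments rest on the Master Formula for $|B|$, the identification $\tau^Y_t = A_{\tau^{|B|}_t}$ of inverse local times, and the observation that the area time change acts excursion by excursion, sending an excursion $w$ of $|B|$ to $w\circ a_w$. The paper makes this explicit via the identity $a(A_{\tau_{u-}}+r)-\tau_{u-}=a_{i^{|B|}_{\tau_{u-}}}(r)$ before substituting into the Master Formula, which is exactly the bookkeeping you describe when matching excursion intervals and local times.
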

\begin{proof}
  \iflong
We shall give a proof based on the Master formula.
 let
  $L_t,\tau_t$ be the local time at level $0$ and its inverse for 
  $\valabs{B}$. Then $s\in G_\omega^Y$ i.e. $s$ is the left endpoint
  of an interval where $ Y(\omega)$ does not vanish iff $a_s \in
  G_\omega^{\valabs{B}}$ and thus if there exists  $u>0$ such that
  $a_s=\tau_{u-}$ that is  $s=A_{\tau_{u-}}$. Then the excursion of 
  $Y$ starting from  $s$ is 
$$ i^Y_s(\omega)(r)= Y_{s+r}(\omega)=\valabs{B}(a(s+r)) =
\valabs{B}(a(A_{\tau_{u-}}+r) - \tau_{u-} + \tau_{u-}) =
i^{\valabs{B}}_{\tau_{u-}}(a(A_{\tau_{u-}}+r) - \tau_{u-})$$
and we have
$$ a(A_{\tau_{u-}}+r) - \tau_{u-} = \inf\ens{t: A_t > A_{\tau_{u-}}+r}
-\tau_{u-} = \inf\ens{t: A_{t+\tau_{u-}} > A_{\tau_{u-}}+r} =
a_{i^{\valabs{B}}_{\tau_{u-}}}(r).$$
This is the  inverse of the area of the excursion of  $\valabs{B}$
starting from  $\tau_{u-}$. By the  \emph{Master Formula} (see
 \cite[Chapter XII]{RY91}) for
 $\valabs{B}$ :
 \begin{align*}
\esp{\sum_{s\in G_\omega^Y} H(s,\omega,i^Y_s(\omega))} &=\esp{\sum_u
  H(A_{\tau_{u-}}(\omega),\omega;
  i^{\valabs{B}}_{\tau_{u-}}(a_{i^{\valabs{B}}_{\tau_{u-}}(.)}))} \\
&= \esp{\sum_u  H(A_{\tau_{u-}}(\omega),\omega; e_u(a_{e_u}(.)))} \\
&= \esp{\int_0^\infty ds \int H(A_{\tau_s}(\omega),\omega; w(a_w(.))) \,
  n_+(dw)}\\
&=\esp{\int_0^\infty ds \int H(\tau^Y_s,\omega; w(a_w(.))) n_+(dw)}.
 \end{align*}
This is the master formula for $Y$, whence the result.
\else
The proof is obtained by combining the Master Formula for excursions
(see e.g.  \cite[Chapter XII]{RY91}) with the time change $a_t$.
\fi
\end{proof}

\subsection{Proof of Theorem \ref{thm:c}}
\label{subsec:besproof}
A power of a Bessel process is up to a constant another Bessel
process, with a change of time (see \cite[Chapter XI, Proposition
(1.11)]{RY91} ). Here we get that 
$$ \rho_t := \frac23 Y_t^{3/2}=\frac23 \valabs{B_{a_t}}^{3/2}$$
is a Bessel process of index  $\nu=-1/3$, i.e. of dimension
$\delta=2+2\nu=4/3$. 

Since $Y_t = \phi(\rho_t)=(\frac32 \rho_t)^{2/3}$, its excursion measure
$n^Y= n^\rho\circ \phi^{-1}$ is the image of $n^Y$ by $\phi$.
The excursion measure of $n^\rho$ normalized by its length is well
defined (see Corollary \ref{cor:excmeabessellength}) and equals $\pi^\rho=P^{\delta',1}_{0,0}$ with 
$\delta'=4-\delta=8/3$.

We claim that $\pi^Y=  \pi^\rho\circ \phi^{-1}=\gamma_\cE$. Observe that
the normalizing operator is 
$\lambda(w)= R(w)^{-1/3} w(t R(w))$ (this operator is adapted
to the scaling of $Y$). Indeed, for positive measurable
$F,\psi$, we have, by the disintegration property for $n^\rho$, since
$\lambda \circ \phi = \phi \circ \nu$ and $\xi\circ \phi = \xi$
\begin{align}\label{eqim2}
\nonumber  n^Y(F\circ \lambda \; \psi(R)) &= \nrho{F\circ\lambda\circ
    \phi \; \psi(R)}=\intof
  \pi^\rho_t(F\circ \phi\circ \nu) \, \psi(t) \,\nrho{R \in dt} \hspace{-4cm} & &\\ 
\nonumber &= \intof \pi^\rho(F\circ \phi\circ \nu \circ s_{1/t})\, \psi(t)
n^Y(R \in dt)\\
\nonumber &= \intof \pi^\rho(F\circ \phi\circ \nu)\, \psi(t)
n^Y(R \in dt)\quad \quad \text{(since $\nu \circ s_c = \nu$)}\\
\nonumber &= \intof \pi^\rho(F\circ \phi)\, \psi(t)
n^Y(R \in dt), \quad \quad \quad \text{(since under $\pi^\rho$ we have
  $\nu(w)=w$)}\\
&= \pi^\rho(F\circ \phi) \times n^Y(\psi(R))=  \pi^\rho \circ \phi^{-1}(F)\times  n^Y(\psi(R)).
\end{align}

We use now Lemma \ref{lem:excy} to identify this probability measure:
$$  n^Y(F\circ \lambda \; \psi(R)) = \nplus{F\circ \lambda(w\circ
  a_w)\; \psi(R(w\circ a_w))}.$$
Observe that for the excursion $u=w\circ a_w$ we have $R(u)=A(w)$,
$ A(u)=1$ and
$$ \lambda(u)(s) = R(u)^{-1/3} u(s R(u)) = A(w)^{-1/3} w\circ
a_w(s A(w)) = v \circ a_v(s),$$
with $v=\nu^1(w)$ because 
$$A_t(v)= \int_0^t A^{-1/3} w(s A^{2/3})\, ds = \unsur{A(w)} A_{t
  A(w)^{2/3}}(w),$$ and thus
$ a_v(s)= A(w)^{-2/3} a_w(s A(w)).$
Therefore, by the independence with respect to the area formula,
\begin{align}\label{eqim3}
\nonumber  n^Y(F\circ \lambda \; \psi(R)) &=\nplus{ F(v\circ a_v)
     \psi(A(w))} = \esp{F(\cE_{a_\cE})} \, \nplus{\psi(A)}=\gamma_\cE(F)\, \nplus{\psi(A)}.
\end{align}
Combining the preceding equality with  \eqref{eqim2}  we obtain
\begin{equation}\label{eqbm}
 \gamma_\cE(F)\,
\nplus{\psi(A)}= n^Y(F\circ \lambda \; \psi(R) )= \pi^\rho\circ \phi^{-1}(F) \times n^Y(\psi(R)).
\end{equation}
By the definition of the normalizing measure, \eqref{eqbm}
yields  $\pi^Y= \gamma_\cE = \pi^\rho\circ \phi^{-1}.$

\iflong

\appendix

\section{Existence of a smooth joint density for Brownian excursion
  area and position}\label{app:den}
\newcommand{\cunb}{C^1_b}
{\color{black} We denote by $\cunb$ the set of bounded and continuously differentiable functions,
whose partial derivatives are bounded.}
\begin{lemma}\label{lem:app:den}
  Let $(e_s,0\le s\le 1)$ be the  standard Brownian excursion. Let
  $A(e)=\int_0^1 e_s\,ds$ be its area and let $0< t_1 < t_2 < \ldots
  < t_d\le 1$. Then the random vector $(A(e); e_{t_i} \, 1\le i\le d)$ has
  a $\cunb$ joint density.
\end{lemma}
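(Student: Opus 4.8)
The plan is to use the Markov property of $e$ to factor its law through explicit smooth transition densities, to decompose the area along the times $t_i$ into conditionally independent pieces, and then to borrow the remaining regularity (in the area variable) from the individual pieces. Since $e_1=0$ almost surely we may assume $t_d<1$; if $t_d=1$ the last coordinate is identically $0$ and the statement is the one with $t_1,\dots,t_{d-1}$. Recall that $(e_t)_{0\le t\le 1}$ is the Bessel$(3)$ bridge from $0$ to $0$ over $[0,1]$ \cite[Theorem~1.1]{PitYor96}; in particular it is an inhomogeneous strong Markov process taking values in $(0,\infty)$ for $t\in(0,1)$, with entrance law $n_s(x)$ and bridge transition kernels $\bar q_{s,t}(x,y)$ ($0<s<t<1$) given by explicit closed formulas that are $C^\infty$ on $(0,\infty)$ in the space variables and that, together with all their space derivatives, obey Gaussian upper bounds locally uniformly for $s,t$ in compact subsets of $(0,1)$. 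Hence $(e_{t_1},\dots,e_{t_d})$ has a joint density $\rho_t=n_{t_1}(x_1)\prod_{j=2}^{d}\bar q_{t_{j-1},t_j}(x_{j-1},x_j)$, which lies in $C^\infty\bigl((0,\infty)^d\bigr)$ and which, with all its partial derivatives, decays like a Gaussian in $\norme{(x_1,\dots,x_d)}$. Writing $t_0:=0$, $t_{d+1}:=1$, $x_0:=x_{d+1}:=0$ and $A_i:=\int_{t_i}^{t_{i+1}}e_s\,ds$, the Markov property says that conditionally on $(e_{t_1},\dots,e_{t_d})=(x_1,\dots,x_d)$ the pieces of $e$ on the successive intervals $[t_i,t_{i+1}]$, $0\le i\le d$, are independent Bessel$(3)$ bridges between the consecutive pinned values; therefore
\begin{equation*}
\phi_t(a,x)=\rho_t(x)\,\bigl(g^{(0)}_{x_1}\ast g^{(1)}_{x_1,x_2}\ast\cdots\ast g^{(d-1)}_{x_{d-1},x_d}\ast g^{(d)}_{x_d}\bigr)(a),
\end{equation*}
where $g^{(i)}$ is the conditional density of $A_i$ given the adjacent heights.

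The problem is thus reduced to the following per-piece claim: for each $i$ the extension of $g^{(i)}$ by zero to $\R$ is a Schwartz function of the area variable, depending in a $C^1$ (in fact $C^\infty$) way on the adjacent heights, with all these derivatives bounded uniformly once the heights are confined to a compact subset of $(0,\infty)$ (resp.\ of $(0,\infty)^2$) — the uniform boundedness over all heights being then recovered by combining this with the Gaussian decay of $\rho_t$ as a height tends to $+\infty$ and the vanishing of the entrance law and transition kernels as a height tends to $0$. Granting the claim, the displayed formula shows that $a\mapsto\phi_t(a,x)$ is $C^1$ (indeed $C^\infty$) and that one may differentiate $\phi_t$ once with respect to each $x_j$, the derivative falling on $\rho_t$ and on the two $g^{(i)}$'s that involve $x_j$; dominated convergence, legitimised by the uniform bounds, then gives that $\phi_t$ is $C^1$ with bounded continuous partial derivatives, i.e.\ $\phi_t\in\cunb$.

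To prove the per-piece claim I would use Malliavin calculus. Each interior piece is a Bessel$(3)$ bridge between two heights in a compact subset of $(0,\infty)$; it is a diffusion with $C^\infty$ drift on $(0,\infty)$ that never returns to $0$, so $A_i$ belongs to $\mathbb{D}^\infty$, its (scalar) Malliavin covariance is a.s.\ positive with negative moments of all orders — e.g.\ by restricting the area integral to a fixed subinterval on which the bridge is a non-degenerate diffusion — and hence $A_i$ has a density that is smooth and rapidly decaying; the Schwartz behaviour at $a=0$ is the classical left-tail estimate $\mathbb{P}(A_i<a)\le e^{-c/a}$ (to have small area the bridge must stay uniformly small), and the smooth dependence on the heights comes from the smooth ($h$-transform, Karlin–McGregor-type) bridge transition densities or from a Girsanov change of endpoints. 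The two extreme pieces are Bessel$(3)$ bridges that touch $0$ at one endpoint; the same argument applies, the only extra point being the negative-moment bound for the Malliavin covariance near the degenerate endpoint, which is again obtained by localising the area integral away from that endpoint. Two equivalent analytic routes are available: either replace Malliavin calculus by the classical closed forms of $\mathbb{E}[e^{-\mu A_i}\mid\text{heights}]$ in terms of Airy functions (Groeneboom; Pitman–Yor) and read off smoothness and decay by shifting the Laplace-inversion contour; or bypass the decomposition entirely by expressing the characteristic function $\mathbb{E}[e^{i\lambda A(e)+i\langle\theta,(e_{t_1},\dots,e_{t_d})\rangle}]$ through the complex Airy semigroup $e^{t(\frac12\Delta-i\lambda x)}$ on $(0,\infty)$ with Dirichlet boundary conditions, proving a bound of the form $Ce^{-c\valabs{\lambda}^{1/3}-c\norme{\theta}^2}$, whence $(1+\valabs{\lambda}+\norme{\theta})$ times it lies in $L^1$ and $\phi_t\in\cunb$ follows by Fourier inversion.

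I expect the main obstacle to be, in every one of these routes, the uniform control near the degenerate value of a height: the negative-moment estimate for the Malliavin covariance of an extreme-piece area as its pinned endpoint tends to $0$ (first route), respectively the semigroup/contour estimate for the complex Airy operator with Dirichlet conditions together with the singular excursion entrance and exit laws (second route). The rest — the Markov factorisation, the conditional independence of the pieces, the convolution structure and the differentiation under the integral sign — is routine.
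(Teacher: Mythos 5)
Your proposal shares the paper's opening move — the Markov factorisation of the excursion law through the $d$ pinned times, reducing to smoothness of a per-piece area density — but then diverges on the key per-piece step. The paper does not reach for Malliavin calculus or complex Airy semigroups. It observes that it suffices to show $(X_t,A_t)$ has a $\cunb$ density under the free Bessel$(3)$ law $P^3_x$, then uses the $h$-transform relation $\frac{dP^3_x}{dQ_x}\big|_{\cF_t}=X_t/x$ to pass to $Q_x$, the law of Brownian motion killed at $0$; under $Q_x$ the hitting-time decomposition
$$
Q_x\!\left[f(X_t)g(A_t)\right]
=\esperance{x}{f(B_t)g(A_t(B))}-\esperance{x}{f(B_t)g(A_t(B))\un{t\ge T_0}}
$$
writes the density as an explicit combination of the \emph{Gaussian} transition density $p_t(x,a;u,v)$ of the Kolmogorov process $(B_t,\int_0^t B_s\,ds)$ and the hitting distribution of $(T_0,A_{T_0})$, from which the $\cunb$ property is read off directly. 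This is considerably more elementary than your routes: no non-degeneracy argument for a Malliavin covariance matrix, no Airy contour estimates, and the delicate endpoint degeneracy you flag as the main obstacle is sidestepped because the multiplication by the killing-boundary harmonic function $h(x)=x$ is built into the $h$-transform identity rather than requiring a separate small-endpoint estimate. Your version would very likely also succeed (the Malliavin non-degeneracy of the area of a one-dimensional non-degenerate diffusion piece is standard, and the Schwartz-at-$0$ decay estimate is correct) and would generalise more readily beyond the Brownian/Bessel$(3)$ setting, but at the cost of much heavier machinery and precisely the endpoint technicalities the paper's $h$-transform trick renders invisible.
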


\begin{proof*}
The law of $e$ is 
$P^{3,1}_{0,0}$ the law of the Bessel bridge of dimension 3 of length
$1$. It is standard knowledge, se e.g. \cite{RY91}, that under
$P^{3,1}_{0,0}$ the coordinate vector 
$(X(t_1), \ldots, X(t_d))$ has a $\cunb$ density $\Lambda(x_1,
\ldots, x_n)$ (since $P^{3,1}_{0,0}$ is the law of the norm of a three
dimensional Brownian bridge). By Markov's property, for any positive
measurable functions $\psi_i$ and $\lambda>0$, we have

\begin{align*}
  P^{3,1}_{0,0}\etp{ e^{-\lambda A_1}\prod \psi_i(X(t_i)) }&= 
\int \Lambda(x_1,
\ldots, x_n) \prod \psi_i(x_i) \prod
P^{3,t_{i+1}-t_i}_{x_i,x_{i+1}}\etp{e^{-\lambda A_{t_{i+1}-t_i}}}.
\end{align*}
Therefore it remains to prove that for any
 $x,t$, under  $P^{3}_{x}$, the  couple $(X_t, A_t=\int_0^t X_s\,ds)$
 has a $\cunb$ density.
 

 Since $P^3_x$ is the  $h$-transform
for the harmonic function  $h(x)=x$ of the law  $Q_x$ of Brownian
motion killed when it reaches $0$,  (see \cite[Chapter VIII,
Section 3]{RY91}), on  $\cF_t$, $\frac{dP^3_x}{dQ_x} = X_t$ and for
every measurable positive $f,g$:

\begin{equation*}
  P^3_x\etc{f(X_t) g(A_t)} = \unsur{x} Q_x\etc{X_t f(X_t) g(A_t)}
\end{equation*}
It remains to show that $(X_t,A_t)$ has a joint density under  $Q_x$
and this is quite straightforward by a hitting time decomposition (see
e.g. \cite{Lachal91} or \cite{Goldman71}) which we reproduce here. 

Let $T_0=\inf\ens{t>0: B_t=0}$
be the hitting time of  $0$ by a brownian motion $B$ and 
$p_t(x,a;y,b)$ be the transition density of the Markov centered
gaussian process  $(B_t,
A_t(B))$. Then,  by Strong Markov's property:
\begin{align*}
   Q_x\etc{ f(X_t) g(A_t)}&=\esperance{x}{f(B_t) g(A_t(B))\un{t< T_0}}
   = \esperance{x}{f(B_t) g(A_t(B))} -
   \esperance{x}{f(B_t)g(A_t(B)) \un{t \ge T_0}}\\
&=\int p_t(x,0;u,v) f(u) g(v) \, du dv \\
&\quad - \int \PP_x\etp{T_0 \in ds,
  A_{T_0}(B) \in da} \un{s<t} p_{t-s}(0,a;u,v) f(u) g(v+a)\, du dv
\end{align*}
\end{proof*}
\fi

  \bibliographystyle{plain}
  \bibliography{cnp}

\affiliationone{
   P. Carmona and N. Pétrélis\\
   Université de Nantes, France
   \email{philippe.carmona@univ-nantes.fr\\
   nicolas.petrelis@univ-nantes.fr}}

\end{document}
